\newtheorem{example}[theorem]{Example}
\newtheorem{remark}[theorem]{Remark}
\newtheorem{assumption}[theorem]{Assumption}
\newcommand{\define}[1]{\emph{#1}}
\newcounter{refer}
\newcommand{\labellocal}[1]{\label{local:\therefer:#1}}
\newcommand{\reflocal}[1]{\ref{local:\therefer:#1}}
\newcommand{\creflocal}[1]{\cref{local:\therefer:#1}}
\newcommand{\Span}{\operatorname{span}}
\newcommand{\Ima}{\operatorname{Im}}
\newcommand{\Ker}{\operatorname{Ker}}
\newcommand{\re}{\operatorname{Re}}
\newcommand{\ind}[1]{\mathds{1}_{#1}}
\newcommand\restr[2]{{
		\left.\kern-\nulldelimiterspace 
		#1 
		\vphantom{\big|} 
		\right|_{#2} 
	}}
\newcommand{\RR}{\mathbb{R}}
\newcommand{\TT}{\mathbb{T}}
\newcommand{\NN}{\mathbb{N}}
\newcommand{\ZZ}{\mathbb{Z}}
\newcommand{\CC}{\mathbb{C}}
\newcommand{\CCg}[1]{\CC_{>#1}}
\newcommand{\CCge}[1]{\CC_{\geq#1}}
\newcommand{\CCl}[1]{\CC_{<#1}}
\newcommand{\CCle}[1]{\CC_{\leq#1}}
\newcommand{\RRg}[1]{\RR_{>#1}}
\newcommand{\RRge}[1]{\RR_{\geq#1}}
\newcommand{\RRle}[1]{\RR_{\leq#1}}
\newcommand{\ZZg}[1]{\ZZ_{>#1}}
\newcommand{\ZZge}[1]{\ZZ_{\geq#1}}
\newcommand{\sectl}[1]{\mathbb{S}_{<#1}}
\newcommand{\local}{\mathrm{loc}}
\newcommand\prop{input-resilient}
\def\propnoun{input-resilience}
\newcommand{\spec}{\operatorname{Spec}}
\newcommand{\domain}{\operatorname{Dom}}
\newcommand{\gradient}{\nabla}
\newcommand{\divergence}{\operatorname{div}}
\newcommand{\identity}{\mathrm{id}}
\newcommand{\Homom}{\operatorname{Hom}}
\newcommand{\Endom}{\operatorname{End}}
\newcommand{\hooktwoheadrightarrow}{%
	\hookrightarrow\mathrel{\mspace{-15mu}}\rightarrow
}
\newcommand{\sysclose}{\operatorname{Close}}
\newcommand{\vertiii}[1]{{\left\vert\kern-0.25ex\left\vert\kern-0.25ex\left\vert #1 
	\right\vert\kern-0.25ex\right\vert\kern-0.25ex\right\vert}}
\newenvironment{talign}
	{\align}
	{\endalign}
\newenvironment{talign*}
	{\csname align*\endcsname}
	{\endalign}
\newcommand{\change}[1]{{#1}}
\title{An input-output framework for stability and synchronization analysis of networks of infinite-dimensional linear systems\thanks{Submitted to the editors on April 20, 2024
\funding{This research was supported by the Natural Sciences and Engineering Research Council of Canada (NSERC).}}
}
\author{Tian Xia\thanks{The Edward S. Rogers Sr. Department of Electrical \& Computer Engineering, University of Toronto, Toronto, ON, M5S 3G4, Canada
    (\email{t.xia@mail.utoronto.ca}).}
\and Luca Scardovi\thanks{--- (\email{luca.scardovi@utoronto.ca}).}
}
\begin{document}

\maketitle

\newif\ifshowpages
\showpagestrue
\ifshowpages
	\pagestyle{plain}
	\thispagestyle{plain}
\else
	\pagestyle{empty}
	\thispagestyle{empty}
\fi

\begin{abstract}
	This paper presents a synchronization criterion for networks of infinite-dimensional linear systems, extending a previous result for finite-dimensional systems. 
	Our result, established in the general framework of input-output relations, requires an additional input-output stability property, compared to the finite-dimensional counterpart. 
	We show that this this property holds for a large class of infinite-dimensional systems including abstract Cauchy problems, parabolic partial differential equations, and time-delay differential equations.      
\end{abstract}

\begin{keywords}
 	Linear systems, Network synchronization, Infinite-dimensional systems
\end{keywords}

\begin{AMS}
	93A14, 93C05, 93C20
\end{AMS}


\section{Introduction}

In the past two decades, the increasing prevalence of applications involving networks and networked systems has brought significant attention to consensus or synchronization problems \cite{ren07}. 
In particular, a good amount of effort has been devoted to the understanding of synchronization in network of identical finite-dimensional linear systems \cite{scardovi07,sepulchre08,moreau05,scardovi09,li10}. In this setting, synchronization is effectively characterized by properties of the isolated systems and the interconnection, and controllers can be designed to enforce synchronization under mild assumptions.

Recently, there has been a growing interest in extending these results to systems evolving on infinite-dimensional state spaces \cite{demetriou13,pilloni16}.
Real-world processes described by partial differential equations (PDEs), such as fluid flow, heat transfer, electromagnetic waves, quantum systems, and biological systems, exemplify infinite-dimensional systems. Furthermore, time-delay differential equations can be modelled as transport PDEs.
The study of networks of infinite-dimensional systems is of interest due to its potential applications in battery management for networks of lithium-ion cells \cite{tang17}, coordination in groups of flexible spacecrafts \cite{chen19}, and control of epidemics \cite{bertaglia21}.

In this paper, we generalize a synchronization criterion (e.g. \cite[Thm. 1]{xia15}) for networks consisting of finite-dimensional systems
\begin{align} \label{eq:system}
	\left\{
	\begin{aligned}
		\dot{x}_j &= A x_j + B u_j, \\
		y_j &= C x_j,
	\end{aligned}
	\right.
	\qquad
	u_j = \sum_{i=1}^{n} \sigma_{ji} (y_i - y_j),
\end{align}
where $j=1,\ldots,n.$ Roughly, the synchronization criterion asserts that the outputs of the network \cref{eq:system} synchronize if and only if $A + \lambda BC$ is Hurwitz for all $\lambda$ in a finite set associated to the coupling coefficients $\sigma_{ji}$ \cite{xia15}. The synchronization criterion effectively reduces a network problem to a lower dimensional stability problem. 

A natural generalization is obtained by replacing the finite-dimensional dynamics \cref{eq:system} by an abstract Cauchy problem, an abstract model for many infinite-dimensional systems \cite{curtain95}. 
An abstract Cauchy problem is a generalization of \cref{eq:system}, where the operator $A$ generates a strongly continuous semigroup on a Banach space and $B$, $C$ are bounded operators. 
In this setting, the synchronization criterion remains valid \cite{demetriou13}. 
It is worth noticing, however, that the boundedness assumption on $B$ and $C$ excludes important classes of infinite-dimensional linear systems.
In particular, PDEs with boundary input and/or boundary output, which appear ubiquitously in applications, are typically described by ``extended'' abstract Cauchy problems where the operators $B$ and $C$ are unbounded \cite{tucsnak09}. 
One of the main results of this paper shows that the synchronization criterion holds also for this class of systems.

The literature on synchronization of infinite-dimensional systems focuses primarily on specific classes of PDEs.
Some references include \cite{li18,aguilar21,chen21} on wave equations, \cite{pilloni16} on heat equations, \cite{wu16,xia20,deutscher22} on second-order parabolic equations, and \cite{gabriel22} on coupled transport equations. 
General models of infinite-dimensional systems have been considered recently in \cite{ferrante21,singh22}.
In \cite{ferrante21}, the authors provide sufficient conditions for synchronization of abstract boundary-actuated system. 
In \cite{singh22}, the authors study extended abstract Cauchy problems and provide a synchronization criterion, however the criterion relies on a well-posedness assumption on the network that we do not require.

The synchronization criterion that we propose is based on very mild assumptions on the system dynamics. 
One difficulty when working in such a general setting, is that the interconnections of extended abstract Cauchy problems are not guaranteed to be extended abstract Cauchy problems.
To overcome this issue, our results will be framed in the context of input-output relations. In \Cref{sect:io_relation}, we show that the synchronization criterion for finite-dimensional systems still holds if the relation representing the system satisfies the regularity property in \cref{def:prop}. 
The subsequent portion of the paper focuses on establishing conditions under which this property holds for specific categories of infinite-dimensional systems including (extended) abstract Cauchy problems, parabolic PDEs, and time-delay ordinary differential equations.    

The paper is organized as follows.
In \Cref{sect:io_relation} we show that the synchronization criterion for finite-dimensional systems in \cite{xia15} is applicable to networks of input-output relations assuming that \cref{def:prop} is verified. 
Sufficient conditions for \cref{def:prop} are provided in \Cref{sect:input_resilient} (abstract Cauchy problems), in  \Cref{sect:parabolic} (parabolic PDEs), and \Cref{sect:delayed} (delay ODEs).
\change{Preliminary results appeared in \cite{xia21}}.

\section{Notation and Preliminaries}

The set of real (resp. complex) numbers is denoted by $\RR$ (resp. $\CC$). 
The set $\CCge{a}$ denotes $\{z \in \CC \mid \re z \ge a\}$, and sets $\CCg{a}$, $\CCl{a}$, $\CCle{a}$ are analogously defined. 
Similarly, $\RRg{a} = (a, \infty)$.
The sector $\{t e^{i\alpha} \mid t > 0,\, \alpha \in [0,\theta)\}$ is denoted by $\sectl{\theta}$.
The symbols $\CC^n$ and $\CC^{n \times m}$ respectively denote the $n$-dimensional vector space and the set of $n$ by $m$ matrices. 
Certain inequalities are expressed using the Vinogradov notation: given functions $f$ and $g$ on $X$, we say $f(x) \lesssim g(x)$ uniformly in $x \in S \subseteq X$ if there exists a constant $C > 0$ such that $f(x) \leq C g(x)$ for all $x \in S$. 
Let $S$ be a subset of $\RR^d$ whose interior $S^{\circ}$ is dense in $S$, and $Y$ be a Fr\'{e}chet space\footnote{A Fr\'{e}chet space is a vector space topologized by a countable family of semi-norms which is complete and Hausdorff.} 
topologized by semi-norms $\lVert \cdot\rVert_{Y_i}$ where $i \in \NN$. 
The partial derivative $\partial_j f$ of a function $f : S \rightarrow Y$ in direction $j$ is defined as usual in $S^{\circ}$ and by continuity on $S \setminus S^{\circ}$.
The vector space of $k$-times continuously differentiable functions with finite semi-norms
\begin{align*}
	\lVert f \rVert_{C^k(S, Y_i)} := \sup_{\lvert\alpha\rvert \leq k,\; x \in S} \lVert \partial^{\alpha} f(x)\rVert_{Y_i} \qquad \forall i \in \NN,
\end{align*}
and equipped with the topology induced by these semi-norms is denoted by $C^k(S, Y)$.
The notation $C^{k}_{\local}(S, Y)$ denotes the set of $k$-times continuously differentiable functions topologized by the semi-norms $\lVert \cdot \rVert_{C^k(K, Y_i)}$ where $i \in \NN$ and $K$ ranges over compact subsets of $S$.
Furthermore, $C^{\infty}(S, Y)$ (resp. $C^{\infty}_{\local}(S, Y)$) denotes the intersection $\bigcap_{k \in \NN} C^{k}(S, Y)$ (resp. $\bigcap_{k \in \NN} C^{k}_{\local}(S, Y)$), with the topology induced by including all semi-norms from every $C^{k}(S, Y)$ (resp. $C^{k}_{\local}(S, Y)$).
All spaces $C^{k}(S, Y)$ and $C^{k}_{\local}(S, Y)$, including $k = \infty$, are Fr\'{e}chet \change{(by the arguments of \cite[Sect. 1.46]{rudin91})}.
In the special case $k \in \ZZge{0}$ and $Y$ is a Banach space, $C^{k}(S, Y)$ is Banach.
The notation $C^{k}_{c}(S, Y)$ denotes the subspace of $C^{\infty}_{\local}(S, Y)$ with supports that are compact in $S$.
When $S$ is compact, $C^{k}_{\local}(S, Y) = C^{k}(S, Y)$.
The absence of parameter $Y$ means $Y = \CC$.
Both parameters $Y$ and $S$ may be omitted if they can be inferred from the context.
All vector spaces in this paper have $\CC$ as the scalar field. 

\subsection{Input-output relations}
In this paper, we will often use an abstract representation of open systems based on {\em input-output relations}.
A (LTI input-output) relation with input space $U$ and output space $Y$ (both Fr\'{e}chet spaces) is a linear subspace $\mathcal{P}$ of $C^{\infty}_{\local}(\TT, U) \times C^{\infty}_{\local}(\TT, Y)$ that is invariant under translation in time.
The set of times $\TT$ denotes either $\RRge{0}$, in which case $\mathcal{P}$ is called a continuous-time relation, or $\ZZge{0}$, in which case $\mathcal{P}$ is a discrete-time relation\footnote{The notation $C^{\infty}_{\local}(\ZZge{0})$ is defined as $C^{0}_{\local}(\ZZge{0})$ by convention.}.

A relation $\mathcal{P}$ represents an open system for which every $(u, y) \in \mathcal{P}$ is an admissible input-output pair. 
A $(U, Y)$-relation is a relation with input space $U$ and output space $Y$; an $X$-relation is an $(X, X)$-relation. A $(U, Y)$-relation $\mathcal{P}$ is \define{well-defined} if for every input $u \in U$, there exists $y\in Y$ such that $(u, y) \in \mathcal{P}$.

Any set of equations with input/output specifications defines a relation.
For example, the relation defined by $\dot{x} = Ax + B_0 u + B_1 \dot{u}$ and $y = Cx + Du$ with input $u$ and output $y$ is the set of $(u, y) \in C^{\infty}_{\local}(\RRge{0}, U) \times C^{\infty}_{\local}(\RRge{0}, Y)$ for which there exists $x \in C^{\infty}_{\local}(\RRge{0}, X)$ such that $(u, x, y)$ solves the equations.



We say that a function $f \in C^{\infty}_{\local}(\TT, X)$ vanishes if $\lim_{t \rightarrow \infty} f(t) = 0$ in $X$. Analogously, we say that $f \in C^{\infty}_{\local}(\RRge{0}, X)$ vanishes smoothly if $\lim_{t \rightarrow \infty} \partial_t^k f(t) = 0$ for every $k \in \ZZge{0}$.  

\begin{definition} \label{def:stability}
	An $(U, Y)$-relation $\mathcal{P}$ is
	\begin{itemize}
		\item[i)] \define{stable} \define{[smoothly stable]} if for every pair $(0, y) \in \mathcal{P}$, the output $y$ is vanishing [smoothly vanishing].
\item[ii)] \define{vanishing-input-vanishing-output (VIVO) [smoothly VIVO]} if for every pair $(u, y) \in \mathcal{P}$ where $u$ is vanishing [smoothly vanishing], the output $y$ is vanishing [smoothly vanishing].
	\end{itemize}
	The smooth versions of the definitions are only defined for continuous-time systems.
\end{definition}


\begin{remark}
	We say $\mathcal{P}$ is VIVO in $(U, Y)$ (or VIVO in $X$ if $U = Y = X$) to emphasize that convergence is measured using the topologies of $U$ and $Y$.
	For example, a function $f \in C^{\infty}_{\local}(\RRge{0} \times [0,1], \CC)$ may be interpreted as an element of $C^{\infty}_{\local}(\RRge{0}, C^0([0,1], \CC))$, $C^{\infty}_{\local}(\RRge{0}, L^1([0,1], \CC))$, etc. 
	Stability in $C^0$ (resp. $L^1$) means $\lim_{t \rightarrow \infty} \|f(t, \cdot)\|_{C^0} = 0$ (resp. $\lim_{t \rightarrow \infty} \|f(t, \cdot)\|_{L^1} = 0$). 
\end{remark}

\change{
Stability defined in \cref{def:stability} is equivalent to the usual definition of stability for finite-dimensional systems. 
Consider any observable\footnote{\change{A system $\dot{x} = Ax + Bu, \; y = Cx + Du$ and its observable subsystem always define the same relation. However, the same property does not hold for the controllable subsystem because the relation contains input-output pairs associated to non-zero initial conditions.}} state-space representation $\dot{x} = Ax + Bu,\, y = Cx + Du$. 
Under zero input, the state $x$ is vanishing iff the output $y$ is vanishing. 
Therefore, $\dot{x} = Ax + Bu,\, y = Cx + Du$ is internally stable iff its induced relation, with input $u$ and output $y$, is stable.
}


It is worth pointing out that smooth stability implies stability, but smooth VIVO does not imply VIVO in general.
An example is the relation defined by $y = \dot{u}$ with input $u$ and output $y$.
Relations like this appear when dealing with PDEs with boundary inputs, \change{because boundary inputs can be transformed into in-domain inputs with time derivatives} (see \Cref{sect:parabolicsystem_boundaryinput}).
This is the motivation for defining smooth VIVO.
\change{For relations satisfying the topological property defined below}, stability and VIVO always imply their smooth counterparts.

\begin{definition}
A $(U, Y)$-relation $\mathcal{P}$ is \define{closed} if it is closed\footnote{The topology of $C^{\infty}_{\local}(\TT, X)$ is generated by the zero neighbourhoods $\{f \mid \partial_t^k f(s) \in V \text{ for all } s \in K\}$, where $V$ varies over zero neighbourhoods of $X$ and $K$ varies over compact subsets of $\TT$.} as a subset of 
	\begin{align*}
		C^{\infty}_{\local}(\TT, U) \times C^{\infty}_{\local}(\TT, Y).
	\end{align*}
\end{definition}

\begin{proposition} \label{pro:relation_closed} \stepcounter{refer}
	Let $\mathcal{P}$ be a closed continuous-time $(U, Y)$-relation.
	If $(u, y) \in \mathcal{P}$, then $(\dot{u}, \dot{y}) \in \mathcal{P}$.
	\change{Hence}, $\mathcal{P}$ is smoothly stable (resp. smoothly VIVO) if it is stable (resp. VIVO).
\end{proposition}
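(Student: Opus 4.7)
The plan is to show $(\dot u,\dot y)\in\mathcal P$ using a difference quotient argument: translation invariance produces time-shifted pairs in $\mathcal P$, linearity turns them into finite differences, and closedness passes to the limit. Then an iteration gives the smooth stability/VIVO conclusions.

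Concretely, fix $(u,y)\in\mathcal P$ and, for $h>0$, let $\tau_h$ denote the forward time-shift $(\tau_h f)(t)=f(t+h)$. By translation invariance, $(\tau_h u,\tau_h y)\in\mathcal P$, and by linearity
\begin{equation*}
    \bigl((\tau_h u-u)/h,\,(\tau_h y-y)/h\bigr)\in\mathcal P.
\end{equation*}
The main technical step is to verify that $(\tau_h u-u)/h\to\dot u$ in $C^{\infty}_{\local}(\RRge{0},U)$ as $h\to 0^+$, and similarly for $y$. For any $k\in\ZZge{0}$, any compact $K\subset\RRge{0}$, and any continuous semi-norm $\lVert\cdot\rVert$ on $U$, one has
\begin{equation*}
    \partial_t^{k}\!\left(\tfrac{\tau_h u-u}{h}\right)(t)
    =\tfrac{1}{h}\int_{0}^{h}\!\partial_t^{\,k+1}u(t+s)\,ds,
\end{equation*}
and since $\partial_t^{k+1}u$ is uniformly continuous on the compact set $K+[0,1]$, the right-hand side converges to $\partial_t^{\,k+1}u(t)$ uniformly in $t\in K$ as $h\to 0^+$. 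This is exactly convergence in the topology defining $C^{\infty}_{\local}$. Since $\mathcal P$ is closed and contains the entire net of difference quotients, the limit $(\dot u,\dot y)$ lies in $\mathcal P$.

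For the second assertion, suppose $\mathcal P$ is stable and let $(0,y)\in\mathcal P$. Applying the first part inductively gives $(0,\partial_t^{k}y)\in\mathcal P$ for every $k$, and stability forces each $\partial_t^{k}y$ to vanish; hence $y$ vanishes smoothly, establishing smooth stability. If instead $\mathcal P$ is VIVO and $(u,y)\in\mathcal P$ with $u$ smoothly vanishing, the same induction yields $(\partial_t^{k}u,\partial_t^{k}y)\in\mathcal P$ with $\partial_t^{k}u$ vanishing, so each $\partial_t^{k}y$ vanishes by VIVO, giving smooth VIVO.

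The only non-routine ingredient is the convergence of the difference quotients in the correct Fréchet topology; once that is in place, closedness and linearity do all the work, and the stability/VIVO upgrades are immediate by induction on the order of the derivative.
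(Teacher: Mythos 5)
Your proof is correct and follows essentially the same route as the paper: the paper also forms the difference quotients $u_\epsilon = (u(\epsilon+\cdot)-u(\cdot))/\epsilon$, notes they lie in $\mathcal{P}$ by translation invariance and linearity, and passes to the limit using closedness. You additionally spell out the uniform-on-compacts convergence of the difference quotients and the induction for the smooth stability/VIVO upgrade, both of which the paper leaves implicit.
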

\begin{proof}
	\change{For $\epsilon > 0$, define $u_{\epsilon}(t) = (u(t + \epsilon) - u(t)) / \epsilon$ and $y_{\epsilon}(t) = (y(t + \epsilon) - y(t)) / \epsilon$.}
	\change{For every $k \in \ZZge{0}$, the computation
	\begin{align*}
		\lim_{\epsilon \rightarrow 0} \partial_t^{k} u_{\epsilon}(t) = \lim_{\epsilon \rightarrow 0} \frac{1}{\epsilon} \int_{0}^{\epsilon} \partial_t^{k}\dot{u}(t+\tau) d\tau = \partial_t^{k}\dot{u}(t),
	\end{align*}
	shows that $\partial_t^k u_{\epsilon}$ converges to to $\partial_t^k \dot{u}$ in $C^{0}_{\local}(\RRge{0}, U)$.
	Thus, $\dot{u} = \lim_{\epsilon \rightarrow 0^+} u_{\epsilon}$ in $C^{\infty}_{\local}(\RRge{0}, U)$, and similarly $\dot{y} = \lim_{\epsilon \rightarrow 0^+} y_{\epsilon}$ in $C^{\infty}_{\local}(\RRge{0}, Y)$.
	By LTI properties, $\mathcal{P}$ contains the pairs $(u_{\epsilon}, y_{\epsilon})$; 
	by closedness, $\mathcal{P}$ also contains the limit $(\dot{u}, \dot{y})$.}
\end{proof}

\begin{remark}
	\change{
	The closedness property can be verified by expressing the relation as the zero-level set of certain constraints.
	For example, the relation $\mathcal{P}$ consisting of pairs $(u, x)$ solving a finite-dimensional system $\dot{x} = Ax + Bu$ is the intersection of the kernels of the continuous maps
	\begin{align*}
		F_t : C^{\infty}_{\local} \times C^{\infty}_{\local} \rightarrow \CC : (u, x) \mapsto Ax(t) + Bu(t) - \dot{x}(t), \qquad \forall t \geq 0.
	\end{align*}
	Therefore, $\mathcal{P}$ is a closed subspace of $C^{\infty}_{\local} \times C^{\infty}_{\local}$.
	}
	
	\change{
	Closedness can be also certified for any relation $\mathcal{Q}$ defined by a finite-dimensional system $\dot{x} = Ax + Bu,\, y = Cx$ with input $u$ and output $y$.
	Write $\mathcal{Q}$ as the sum of vector subspaces
	\begin{align*}
		\mathcal{Q}_i + \mathcal{Q}_f := \{(0, y) \mid y(t) = Ce^{At}x_0 \text{ for some } x_0 \in X\} + \{(u, y) \mid y = Ce^{At}B * u\},
	\end{align*} 
	where $\mathcal{Q}_i$ contains the internal responses and $\mathcal{Q}_f$ contains the forced responses.
	A similar argument shows that $\mathcal{Q}_f$ is closed in $C^{\infty}_{\local} \times C^{\infty}_{\local}$.
	Thus, the quotient space $(C^{\infty}_{\local} \times C^{\infty}_{\local}) / \mathcal{Q}_f$ is Hausdorff.
	Since $\mathcal{Q}_i$ is finite-dimensional, $(C^{\infty}_{\local} \times C^{\infty}_{\local}) / \mathcal{Q}$ is the quotient of $(C^{\infty}_{\local} \times C^{\infty}_{\local}) / \mathcal{Q}_f$ by a finite-dimensional subspace.
	Since finite-dimensional subspaces of Hausdorff spaces are closed \cite[Thm. 1.21]{rudin91}, the quotient space $(C^{\infty}_{\local} \times C^{\infty}_{\local}) / \mathcal{Q}$ is Hausdorff and the subspace $\mathcal{Q}$ is closed.
	}
\end{remark}

Compositions of relations are naturally defined. 
Let $\mathcal{P}$ be a $(U, Y)$-relation and $\mathcal{Q}$ a $(Y, Z)$-relation, their {\em serial composition} is
\begin{align*}
	\mathcal{Q} \circ \mathcal{P} &:= \left\{ (u, z) \mid \;\;
	\begin{aligned}
	&(u, y) \in \mathcal{P}
	\text{ and } (y, z) \in \mathcal{Q} \\ 
	&\text{for some } y \in C^{\infty}_{\local}(\TT, Y) 
	\end{aligned}
	\right\}.
\end{align*}
Assuming that $Z = U$, their closed-loop interconnection is
\begin{align*}
 	\sysclose(\mathcal{P}, \mathcal{Q}) := 
	\left\{ (u, y) \mid \;\;
	\begin{aligned}
	&(u + z, y) \in \mathcal{P}
	\text{ and } (y, z) \in \mathcal{Q} \\ 
	&\text{for some } z \in C^{\infty}_{\local}(\TT, U)
	\end{aligned}
	\right\}.
\end{align*}

We consider a network as a collection of mutually interacting subsystems, where each subsystem is a $(U, Y)$-relation $\mathcal{P}$.
A collection of $n$ subsystem without interaction is the $(U^n, Y^n)$-relation $\mathcal{P}^{\oplus n}$, defined as the $n$-fold Cartesian product of $\mathcal{P}$ in $U^n \times Y^n$.
The interaction is a feedback relation $\mathcal{L} \subseteq Y^n \times U^n$.
The network is the closed-loop system $\sysclose(\mathcal{P}^{\oplus n}, \mathcal{L})$, represented in the block diagram in \cref{fig:network}.

\begin{figure}[t]
    \centering
    \begin{minipage}{0.4\textwidth}
    \centering
    \change{
    \begin{tikzpicture}[scale=0.9]
	\tikzstyle{sys}=[rectangle, draw, minimum width=10mm, minimum height=8mm];
	\node[circle, draw, inner sep=0, radius=4mm] at (-1.5, 0) (add) {$+$};
	\node[sys] at (0,0) (P) {$\mathcal{P}^n$};
	\node[sys] at (0,-1) (Q) {$\mathcal{L}$};
	\coordinate (in) at (-2.5, 0);
	\coordinate (out) at (1.5,0);
	\draw[->] (in) -- node[above]{$\mathbf{u}$} (add);
	\draw[->] (P) -- (out) |- (Q);
	\draw[->] (Q) -| (add);
	\draw[->] (add) -- (P);
	\draw[->] (out) --node[above]{$\mathbf{y}$} ($(out) + (0.5, 0)$);
	\end{tikzpicture}
	}
	\caption{Network block diagram} \label{fig:network}
    \end{minipage} 
    \hspace{0.05\textwidth} %
    \begin{minipage}{0.5\textwidth}
     \centering
     \begin{tikzpicture}[scale=0.9]
		\tikzstyle{sys}=[rectangle, draw, minimum width=10mm, minimum height=8mm];
		\node[sys] at (0,0) (P) {$\mathcal{P}$};
		\node[sys] at (0,-1) (Q) {$\lambda_k$};
		\node[circle, draw, inner sep=0, radius=4mm] at (-1.5, 0) (add) {$+$};
		\node[sys] at (-3,0) (c) {$c_k$};
		\coordinate (out) at (1.5,0);
		\draw[->] (P) -- (out) |- (Q);
		\draw[->] (Q) -| (add);
		\draw[->] (add) -- (P);
		\draw[->] (c) -- (add);	
		\draw[->] (-4.5, 0) --node[above]{$y_{k+1}$} (c);
		\draw[->] (out) --node[above,]{$y_k$} ($(out) + (0.5,0)$);
	\end{tikzpicture}
	 \caption{Representation of $(c_k y_{k+1}, y_k) \in \sysclose(\mathcal{P}, \lambda_k)$} \label{fig:feed}
    \end{minipage}
\end{figure}

\section{Stability and synchronization analysis} \label{sect:io_relation}

This section presents a criterion for synchronization of networks composed of 
\change{$n$ identical copies of a $(U, Y)$-relation\footnote{\change{The relation $\mathcal{P}$ could represent a plant, or the composition of a plant with a controller.}} $\mathcal{P}$}. 
We assume that $U = Y =: X$ and $\mathcal{L} \subseteq X^n \times X^n$ is the $X^n$-relation defined by a linear map $L : \CC^n \rightarrow \CC^n$. 
The latter assumption means that the input of each subsystem is an $\CC$-linear combination of the $n$ outputs.
\change{By an abuse of notation, the induced relation $\mathcal{L} = \{(u, y) \mid y(t) = Lu(t)\}$ is identified with $L$.}
The following result provides a sufficient and a necessary condition for $\sysclose(\mathcal{P}^{\oplus n}, L)$ to be stable.

\begin{proposition} \label{pro:decouple} \stepcounter{refer}
	Given $L \in \CC^{n \times n}$ and an $X$-relation $\mathcal{P}$ such that\footnote{\change{In $\sysclose(\mathcal{P}, \lambda)$, the symbol $\lambda$ represents the relation $\{(u, y) \mid y(t) = \lambda u(t)\}$ induced by the multiplication operator $\lambda : X \rightarrow X$.}} $\sysclose(\mathcal{P}, \lambda)$ is well-defined for every $\lambda \in \CC$.
	Each statement below implies the next.
	\begin{enumerate}[(a)]
		\item\labellocal{item:a} $\sysclose(\mathcal{P}, \lambda)$ is smoothly VIVO for \change{every $\lambda$ in the spectrum $\spec(L)$ of $L$}.
		\item\labellocal{item:b} $\sysclose(\mathcal{P}^{\oplus n}, L)$ is stable.
		\item\labellocal{item:c} $\sysclose(\mathcal{P}, \lambda)$ is stable for every $\lambda \in \spec(L)$.
	\end{enumerate}
\end{proposition}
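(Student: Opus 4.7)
The plan is to prove (a)$\Rightarrow$(b) by decoupling the network via the Jordan normal form of $L$ and propagating stability through each Jordan chain using the smooth VIVO property, and to prove (b)$\Rightarrow$(c) by lifting a single-loop trajectory of $\sysclose(\mathcal{P},\lambda)$ to a trajectory of the full network via an eigenvector of $L$.

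For (a)$\Rightarrow$(b), I would write $L = PJP^{-1}$ with $J$ upper-triangular in Jordan normal form and change output coordinates by $\mathbf{z}=P^{-1}\mathbf{y}$. A trajectory $(0,\mathbf{y}) \in \sysclose(\mathcal{P}^{\oplus n}, L)$ satisfies $((L\mathbf{y})_j, y_j) \in \mathcal{P}$ for each $j$; taking $\CC$-linear combinations and using that $\mathcal{P}$ is a linear subspace, this is equivalent to $((J\mathbf{z})_i, z_i) \in \mathcal{P}$ for every $i$. Since $(J\mathbf{z})_i = \lambda_i z_i + \epsilon_i z_{i+1}$ with $\epsilon_i \in \{0,1\}$ (and $\lambda_i\in\spec(L)$), this rewrites as $(\epsilon_i z_{i+1}, z_i) \in \sysclose(\mathcal{P}, \lambda_i)$. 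At the last index of every Jordan chain $\epsilon_i = 0$, so smooth VIVO of $\sysclose(\mathcal{P},\lambda_i)$ applied to zero input yields smooth vanishing of $z_i$; a downward induction along the chain, invoking smooth VIVO at each step with the (smoothly vanishing) output of the previous cell as input to the next, propagates smooth vanishing upward through the whole chain. Hence $\mathbf{z}$ vanishes smoothly, so $\mathbf{y}=P\mathbf{z}$ vanishes, proving stability of the network.

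For (b)$\Rightarrow$(c), I would fix $\lambda \in \spec(L)$, pick a nonzero eigenvector $v$ with $Lv=\lambda v$, and take any $(0,y)\in\sysclose(\mathcal{P},\lambda)$, equivalently $(\lambda y, y) \in \mathcal{P}$. Define $\mathbf{y} := (v_1 y,\ldots,v_n y)$. Then $(L\mathbf{y})_j = (Lv)_j\,y = v_j(\lambda y)$, so componentwise $((L\mathbf{y})_j, v_j y) = v_j \cdot (\lambda y, y) \in \mathcal{P}$ by linearity, giving $(0,\mathbf{y}) \in \sysclose(\mathcal{P}^{\oplus n}, L)$. Stability of the network forces each $v_j y$ to vanish in $X$, and since some $v_j \neq 0$ this gives $y\to 0$, as required.

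The main obstacle is the Jordan-chain induction in the first implication: at each step the input to the next cell of the chain is the output of the previous cell, so the regularity in which ``vanishing'' is measured must be preserved by feedback through $\mathcal{P}$. This is precisely what forces the smooth strengthening of VIVO in (a), because the base case at the tail of the chain only produces a smoothly vanishing signal and only smooth VIVO can transmit that regularity to the next cell. When $L$ is diagonalizable the induction degenerates to the base case and plain stability of $\sysclose(\mathcal{P},\lambda)$ would already suffice; the smoothness refinement is the price paid for non-trivial Jordan blocks, and it explains the gap between hypothesis (a) and conclusion (c).
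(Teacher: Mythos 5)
Your proof is correct. The implication (a)$\Rightarrow$(b) follows the paper's own route exactly: conjugate by a Jordan basis, observe that each transformed output satisfies $(\epsilon_i z_{i+1}, z_i) \in \sysclose(\mathcal{P},\lambda_i)$, and run a backward induction along each chain, with the smooth strengthening of VIVO doing precisely the job you identify (transmitting the regularity of the vanishing from one cell to the next). Where you genuinely depart from the paper is in (b)$\Rightarrow$(c). The paper starts from a non-vanishing output $y_k$ of some $\sysclose(\mathcal{P},\lambda_k)$ with $c_k=0$ and extends it to a full network trajectory by solving upward along the Jordan chain, which is exactly where the hypothesis that $\sysclose(\mathcal{P},\lambda)$ is well-defined for every $\lambda$ is consumed. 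You instead lift an arbitrary $(0,y)\in\sysclose(\mathcal{P},\lambda)$ along an eigenvector $v$ of $L$, so that every component of $\mathbf{y}=(v_1y,\ldots,v_ny)$ is a scalar multiple of $y$ and the membership $((L\mathbf{y})_j, v_j y)=v_j\cdot(\lambda y, y)\in\mathcal{P}$ is immediate from linearity of the relation. This is a cleaner construction and, notably, it does not use well-definedness at all for this implication; the paper's extension argument needs it, whereas in your version the hypothesis is only required where the proposition is later combined with \propnoun{} (e.g., in \cref{cor:stab}). Both arguments are valid; yours buys a slightly weaker set of assumptions for (b)$\Rightarrow$(c) at no cost.
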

\begin{proof}
	Let $Q \in \CC^{n \times n}$ be an invertible matrix such that $Q^{-1}LQ$ is in Jordan normal form
	\begin{align*}
		Q^{-1}LQ &= \begin{bmatrix} \lambda_1 & c_1 & & & 0 \\ & \lambda_2 & c_2 & & \\ & & \ddots & \ddots & \\ & & & \lambda_{n-1} & c_{n-1} \\ 0 & & & & \lambda_{n} \end{bmatrix},
	\end{align*}
	where $\lambda_1, \ldots, \lambda_{n}$ are the eigenvalues of $L$.
	By sketching the closed-loop block diagram, it is easy to verify that
	\begin{align} \labellocal{eq:1}
		\begin{aligned}
		\sysclose(\mathcal{P}^{\oplus n}, L) &= Q \circ \sysclose(Q^{-1} \circ \mathcal{P}^{\oplus n} \circ Q,\, Q^{-1}LQ) \circ Q^{-1} \\
		&= Q \circ \sysclose(\mathcal{P}^{\oplus n},\, Q^{-1}LQ) \circ Q^{-1},
		\end{aligned}
	\end{align} 
	where the matrix $Q$ is identified with the $X^n$-relation induced by $Q : \CC^n \rightarrow \CC^n$.
	By the structure of $Q^{-1} L Q$, the outputs $\mathbf{y} = (y_1, \ldots, y_{n})$ of $\sysclose(\mathcal{P}^{\oplus n}, Q^{-1}LQ)$, under zero inputs, have components satisfying 
	\begin{equation} \labellocal{eq:2}
		(c_k y_{k+1}, y_k) \in \sysclose(\mathcal{P}, \lambda_k), 
	\end{equation}
	see \cref{fig:feed}.
	
	
	
	If \reflocal{item:a} holds, then every $y_k$ must be smoothly vanishing by backward induction on $k$, which implies \reflocal{item:b}.
	Furthermore, for every $\lambda \in \spec(L)$, there exists $k$ such that $\lambda_k = \lambda$ and $c_k = 0$.
	Since $\sysclose(\mathcal{P}, \lambda)$ is well-defined, any output $y_k$ of $\sysclose(\mathcal{P}, \lambda_k)$ extend to $(y_1, \ldots, y_n)$ whose components verify \creflocal{eq:2}.
	Therefore, $\sysclose(\mathcal{P}^{\oplus n}, Q^{-1}LQ)$ is not stable if any of $\sysclose(\mathcal{P}, \lambda_k)$ is not stable.
	By \creflocal{eq:1}, stabilitiy of $\sysclose(\mathcal{P}^{\oplus n}, Q^{-1}LQ)$ is equivalent to stability of $\sysclose(\mathcal{P}^{\oplus n}, L)$, hence \reflocal{item:b} implies \reflocal{item:c}.
\end{proof}

\begin{remark}
	\change{
	It is worth noting that \cref{pro:decouple} also holds with ``smoothly VIVO'' replaced by ``VIVO''.
	We formulate the result with ``smoothly VIVO'' because there are smoothly VIVO systems which are not VIVO.
	An example is the class of equations studied in \Cref{sect:parabolicsystem_boundaryinput} (see \cref{thm:parabolic_neumann_input}).
	In general, PDEs with boundary input or output are not VIVO, because they can be represented as relations defined by $\dot{x} = Ax + \sum_{j=0}^{1} B_j \partial_t^j u$, $y = \sum_{j=0}^{2} C_j \partial_t^j x + \sum_{j=0}^{1} D_j \partial_t^j u$ with bounded operators $B_j, C_j, D_j$ \cite[Proposition 2.3.25]{xia24}.
	It is intuitive that vanishing of $u$ does not guarantee vanishing of $y$; we must require vanishing of several time-derivatives of $u$.
	}
\end{remark}

Note that, in order to guarantee an equivalence between the properties (a) (b) and (c), we would need to add an implication between (c) and (a) in \cref{pro:decouple}. 
\change{This implication holds for finite-dimensional systems (see \cref{ex:input_resilience}), but not for general infinite-dimensional systems, as illustrated by the following example.}

\begin{example}
	Let $A : L^1([0,1]) \rightarrow L^1([0,1])$ be the multiplication operator taking $f$ to $g(\xi) = -\xi f(\xi)$.
	Consider the $L^1([0,1])$-relation defined by $\dot{x} = Ax + u$ with input $u$ and output $x$.
	This relation is stable.
	However, an explicit computation of the solution shows that $x$ does not vanish smoothly under the smoothly vanishing input $u(t) = e^{-\xi t}$.
\end{example}

\change{\cref{pro:decouple} can be stated as an equivalence, assuming the implication between (c) and (a).}

\begin{definition} \label{def:prop} 
	A relation $\mathcal{P}$ is \define{\prop{}} (in $X$) if it is well-defined and 
	if being stable (in $X$) implies being smoothly VIVO (in $X$). 
\end{definition}  

\begin{theorem}[Stability criterion] \label{cor:stab} \stepcounter{refer}
\hspace{-2mm}Let $\mathcal{P}$ be an $X$-relation such that $\sysclose(\mathcal{P}, \lambda)$ is \prop{} for all $\lambda \in \CC$,
then $\sysclose(\mathcal{P}^{\oplus n}, L)$ is stable in $X^n$ if and only if $\sysclose(\mathcal{P}, \lambda)$ is stable in $X$ for every $\lambda \in \spec(L)$.
\end{theorem}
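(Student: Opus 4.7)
The plan is to close the chain of implications already established in \cref{pro:decouple}. That proposition gives (a) $\Rightarrow$ (b) $\Rightarrow$ (c) under the sole well-definedness hypothesis that $\sysclose(\mathcal{P}, \lambda)$ is well-defined for every $\lambda \in \CC$. Since \prop{} relations are by \cref{def:prop} well-defined, the hypothesis of the present theorem is in particular strong enough to invoke \cref{pro:decouple}.

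First I would observe that the direction ``$\sysclose(\mathcal{P}^{\oplus n}, L)$ stable $\Rightarrow$ $\sysclose(\mathcal{P}, \lambda)$ stable for all $\lambda \in \spec(L)$'' is exactly the content of (b) $\Rightarrow$ (c) from \cref{pro:decouple}, so no further work is needed on that side.

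For the converse, suppose $\sysclose(\mathcal{P}, \lambda)$ is stable in $X$ for every $\lambda \in \spec(L)$. The key step is the invocation of \propnoun{}: by \cref{def:prop}, each $\sysclose(\mathcal{P}, \lambda)$ being stable and \prop{} implies that it is smoothly VIVO in $X$. This is precisely statement (a) in \cref{pro:decouple}, and the implication (a) $\Rightarrow$ (b) in the same proposition yields stability of $\sysclose(\mathcal{P}^{\oplus n}, L)$ in $X^n$.

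There is no genuine obstacle: the notion of \propnoun{} was introduced exactly so that (c) $\Rightarrow$ (a) holds, promoting the one-way chain of \cref{pro:decouple} into an equivalence. The only thing to be careful about is that \cref{pro:decouple} requires $\sysclose(\mathcal{P}, \lambda)$ to be well-defined for \emph{every} $\lambda \in \CC$, not merely those in $\spec(L)$; this is guaranteed by the assumption that $\sysclose(\mathcal{P}, \lambda)$ is \prop{} for all $\lambda \in \CC$, which is why the theorem is stated with this stronger quantifier.
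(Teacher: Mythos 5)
Your proposal is correct and follows exactly the intended route: the paper leaves this theorem unproved precisely because it is the immediate consequence of \cref{pro:decouple} combined with \cref{def:prop} that you spelled out, with (b)\,$\Rightarrow$\,(c) giving necessity and \propnoun{} promoting (c) to (a) so that (a)\,$\Rightarrow$\,(b) gives sufficiency. Your remark about needing well-definedness of $\sysclose(\mathcal{P},\lambda)$ for \emph{all} $\lambda\in\CC$ (not just $\lambda\in\spec(L)$) to invoke \cref{pro:decouple} is a correct and worthwhile observation.
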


%
%

Synchronization of a network is the stability of the diagonal subspace, hence it can also be related to stability properties of the subunits.

\begin{definition} \stepcounter{refer}
	An $X^n$-relation $\mathcal{P}$ \define{synchronizes} if every $y \in C^{\infty}_{\local}(\TT, X^n)$ such that $(0, y) \in \mathcal{P}$ converges\footnote{$y$ converging to a subset $S$ of a topological vector space $X$ means that for every zero neighbourhood $V$ of $X$, $y$ eventually remains in the set $S + V$.} to the diagonal subspace $\Delta_{X} := \{(x, \ldots, x) \mid x \in X\}$.
\end{definition}

\begin{theorem}[Synchronization criterion] \label{cor:sync} \stepcounter{refer}
	Let $\mathcal{P}$ be an $X$-relation such that $\sysclose(\mathcal{P}, \lambda)$ is \prop{} for all $\lambda \in \CC$,
	and $L \in \CC^{n \times n}$ be a matrix having eigenvector $\mathbf{1}_n := (1, \ldots, 1)$ associated to some eigenvalue $\lambda_1$,
	then $\sysclose(\mathcal{P}^{\oplus n}, L)$ synchronizes in $X^n$ if and only if $\sysclose(\mathcal{P}, \lambda)$ is stable for every $\lambda \in \spec(L)$ excluding one instance of $\lambda_1$.
\end{theorem}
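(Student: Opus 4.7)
The plan is to reduce the synchronization question to a stability question by isolating the diagonal direction and then invoking \cref{cor:stab}. Since $L\mathbf{1}_n = \lambda_1 \mathbf{1}_n$, we can pick an invertible $Q \in \CC^{n \times n}$ whose first column is $\mathbf{1}_n$ and such that
\begin{align*}
    Q^{-1} L Q = \begin{bmatrix} \lambda_1 & \mathbf{v}^T \\ 0 & M \end{bmatrix},
\end{align*}
for some $\mathbf{v} \in \CC^{n-1}$ and $M \in \CC^{(n-1) \times (n-1)}$; for instance, $Q$ may be obtained by completing $\mathbf{1}_n$ to a basis that puts $L$ in Schur form. In particular, $\spec(M)$ is the list of eigenvalues of $L$ with one instance of $\lambda_1$ deleted.

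By the same identity $\sysclose(\mathcal{P}^{\oplus n}, L) = Q \circ \sysclose(\mathcal{P}^{\oplus n}, Q^{-1}LQ)$ derived in the proof of \cref{pro:decouple}, the substitution $\mathbf{z} = Q^{-1} y = (z_1, \mathbf{z}')$ with $\mathbf{z}' := (z_2, \ldots, z_n) \in X^{n-1}$ puts the pairs $(0, y) \in \sysclose(\mathcal{P}^{\oplus n}, L)$ in bijection with the pairs $(0, \mathbf{z}) \in \sysclose(\mathcal{P}^{\oplus n}, Q^{-1}LQ)$, and the block structure of $Q^{-1}LQ$ decouples this latter condition into
\begin{align*}
    (\mathbf{v}^T \mathbf{z}', z_1) \in \sysclose(\mathcal{P}, \lambda_1) \quad \text{and} \quad (0, \mathbf{z}') \in \sysclose(\mathcal{P}^{\oplus (n-1)}, M).
\end{align*}
Since $Q$ is a topological automorphism of $X^n$ that sends $\{(x, 0, \ldots, 0) \mid x \in X\}$ onto $\Delta_X$, the convergence $y \to \Delta_X$ is equivalent to the vanishing of $\mathbf{z}'$ in $X^{n-1}$.

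The theorem therefore reduces to the equivalence between synchronization of the $L$-network and stability in $X^{n-1}$ of $\sysclose(\mathcal{P}^{\oplus (n-1)}, M)$; once this is established, \cref{cor:stab} applied to $\mathcal{P}$ and $M$ (permissible because every $\sysclose(\mathcal{P}, \lambda)$ is \prop{}) delivers exactly the stability conditions in the statement. The ``$\Leftarrow$'' direction is immediate, since stability of the $M$-subsystem forces $\mathbf{z}'$ to vanish for every admissible $\mathbf{z}$. For ``$\Rightarrow$'', given any $(0, \mathbf{z}') \in \sysclose(\mathcal{P}^{\oplus (n-1)}, M)$, I would invoke the well-definedness component of \propnoun{} for $\sysclose(\mathcal{P}, \lambda_1)$ to produce $z_1 \in C^{\infty}_{\local}(\TT, X)$ with $(\mathbf{v}^T \mathbf{z}', z_1) \in \sysclose(\mathcal{P}, \lambda_1)$; concatenating yields a trajectory $(0, \mathbf{z}) \in \sysclose(\mathcal{P}^{\oplus n}, Q^{-1}LQ)$, and synchronization of the $L$-network then forces $\mathbf{z}'$ to vanish.

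The delicate step is precisely this lifting argument in the ``$\Rightarrow$'' direction: without the existence of $z_1$ guaranteed by \propnoun{} of $\sysclose(\mathcal{P}, \lambda_1)$, an $M$-trajectory could in principle fail to be the projection of any $L$-network trajectory, and synchronization would not constrain its asymptotic behaviour.
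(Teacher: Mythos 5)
Your proposal is correct and follows essentially the same route as the paper: conjugating by a matrix $Q$ whose first column is $\mathbf{1}_n$, identifying synchronization with the vanishing of the last $n-1$ transformed coordinates, reducing to stability of $\sysclose(\mathcal{P}^{\oplus (n-1)}, \tilde{L})$, and invoking the stability criterion. The only difference is presentational: the paper compresses your two directions into the relational identity $\pi \circ \sysclose(\mathcal{P}^{\oplus n}, Q^{-1}LQ) = \sysclose(\mathcal{P}^{\oplus (n-1)}, \tilde{L}) \circ \pi$, whose ``$\supseteq$'' inclusion is exactly the lifting via well-definedness of $\sysclose(\mathcal{P}, \lambda_1)$ that you correctly single out as the delicate step.
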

\begin{proof}
	Let $Q \in \CC^{n \times n}$ be an invertible matrix whose first column is $\mathbf{1}_n$, and $\pi$ be the projection of $\CC^n = \CC \times \CC^{n-1}$ onto the second component.
	Since $\Ker (\pi Q^{-1}) = \Ima \mathbf{1}_n$, any function $y \in C^{\infty}_{\local}(\RRge{0}, X^n)$ synchronizes iff $\pi Q^{-1} y$ vanishes.
	Therefore, $\sysclose(\mathcal{P}^{\oplus n}, L)$ synchronizes iff $\pi Q^{-1} \circ \sysclose(\mathcal{P}^{\oplus n}, L)$ is stable.
	We may write
	\begin{align*}
		\pi Q^{-1} \circ \sysclose(\mathcal{P}^{\oplus n}, L) &= \pi \circ \sysclose(\mathcal{P}^{\oplus n}, Q^{-1}LQ) \circ Q^{-1}.
	\end{align*}
	Since the first column of $Q$ is the eigenvector $\mathbf{1}_n$ of $L$, the matrix $Q^{-1}LQ$ is a block upper-triangular matrix with blocks $\lambda_1$ and $\tilde{L} \in \CC^{(n-1) \times (n-1)}$.
	This implies that $\pi \circ \sysclose(\mathcal{P}^{\oplus n}, Q^{-1}LQ) = \sysclose(\mathcal{P}^{\oplus (n-1)}, \tilde{L}) \circ \pi$.
	Synchronization of $\sysclose(\mathcal{P}^{\oplus n}, L)$ is therefore equivalent to stability of $\sysclose(\mathcal{P}^{\oplus (n-1)}, \tilde{L})$.
	The conclusions follow by \cref{pro:decouple} and the fact that $\spec(L) = \spec(\tilde{L}) \cup \{\lambda_1\}$.
\end{proof}

\change{We point out that \cref{cor:stab} and \cref{cor:sync} hold for finite-dimensional systems because these systems are \prop{}.}

\begin{example} \label{ex:input_resilience}
	\change{
	We claim that any relation $\mathcal{P}$ defined by a finite-dimensional observable\footnote{\change{A system $\dot{x} = Ax + Bu, \; y = Cx$ and its observable subsystem always define the same relation. However, the same property does not hold for the controllable subsystem because the relation contains input-output pairs associated to non-zero initial conditions.}} system $\dot{x} = Ax + Bu, \; y = Cx$ is input-resilient.
	Stability of $\mathcal{P}$ implies that the eigenvalues of $A$ are contained in $\CCl{0}$.
	In view of the formula $y(t) = Ce^{At} x(0) + \int_{0}^{t} Ce^{A(t-\tau)}B u(\tau) d\tau$, 
	\begin{align*}
		\|y(t)\| &\lesssim \|e^{At}\| \|x(0)\| + \|e^{At / 2}\| \int_{0}^{t/2} \|e^{A(t/2-\tau)}\| d\tau \cdot \sup_{\tau \in [0,t/2]} \|u(\tau)\| \\
		&\qquad + \int_{t/2}^{t} \|e^{A(t-\tau)}\| d\tau \cdot \sup_{\tau \in [t/2,t]} \|u(\tau)\| \qquad \text{uniformly in } t \geq 0.
	\end{align*}
	Since $\|e^{At}\|$ decays exponentially, we see that $\mathcal{P}$ is VIVO.
	By \cref{pro:relation_closed}, $\mathcal{P}$ is also smoothly VIVO.
	}
	
	\change{Since $\mathcal{P}$ is \prop{}, \cref{cor:stab} and \cref{cor:sync} imply:
	\begin{itemize}
		\item $\sysclose(\mathcal{P}^{\oplus n}, L)$ is stable if and only if $A + \lambda BC$ is stable for every $\lambda \in \spec(L)$;
		\item Assuming that $L\mathbf{1}_n = \lambda_1 \mathbf{1}_n$, then $\sysclose(\mathcal{P}^{\oplus n}, L)$ synchronizes if and only if $A + \lambda BC$ is stable for every $\lambda \in \spec(L)$ excluding $\lambda_1$.
	\end{itemize}
	Note that these statements characterize stability and synchronization of the outputs of the observable system $\dot{x} = Ax + Bu$, $y = Cx$.
	Characterizations of the states can be obtained replacing $C$ by the identity and $B$ by $BC$.}
\end{example}

\change{We will leverage the arguments in \cref{ex:input_resilience} to establish \propnoun{} for several classes of infinite-dimensional systems in the next sections.}

\section{Relations defined by semigroups} \label{sect:input_resilient} 

We have seen in the previous section that \propnoun{} allows network stability and synchronization to be determined by a stability analysis of the isolated units. 
In this section, we will provide conditions under which abstract Cauchy problems define \prop{} relations.
These conditions will be used in the next sections for parabolic equations and delayed ordinary differential equations.

The relations we analyze are (continuous and discrete-time) relations defined by \cref{eq:transition_system} and \cref{eq:transition_io_system}.
They are shown to be well-defined (\cref{cor:transition_welldefined}), and \prop{} under certain compactness assumption on the transition semigroups (\cref{pro:transition_input_resilient}, \cref{pro:transition_io_input_resilient}).
A sufficient condition based on the semigroup generator is also presented (\cref{pro:acp_resilient}). 
We remind the definition of strongly continuous semigroup. 
\begin{definition} \label{def:semigroup_sc} \stepcounter{refer}
	A \define{(strongly continuous) semigroup} on a Banach $X$ is a mapping $\Phi : \TT \rightarrow \Endom(X)$, satisfying\footnote{$\Endom(X)$ is the space of continuous operators on $X$.}
	\begin{itemize} 
		\item(Semigroup properties) $\Phi(0) = \identity_{X}$ and $\Phi(t + s) = \Phi(t) \Phi(s)$ for all $t, s \in \TT$.
		\item(Strong continuity\footnote{The strong continuity assumption is vacuous when $\TT = \ZZge{0}$.}) $\Phi : \TT \rightarrow \Endom(X)$ is continuous when $\Endom(X)$ is given the strong operator topology (i.e., for all $x \in X$, the function $\TT \rightarrow X : t \mapsto \Phi(t)x$ is continuous).
	\end{itemize}
	If $\TT = \RRge{0}$, the \define{generator} of $\Phi$ is the operator $A$ defined by
	\[
		\domain(A) = \left\{x \in X \mid \lim_{t \rightarrow 0^+} \frac{\Phi(t) x - \Phi(0) x}{t} \text{ exists} \right\},\quad  Ax = \lim_{t \rightarrow 0^+} \frac{\Phi(t)x - \Phi(0)x}{t}.
	\]
	$\Phi(t)$ with generator $A$ is also denoted by $e^{At}$.
\end{definition}

We start our analysis by considering relations $\mathcal{P}$ defined as smooth solutions $(u, x) \in C^{\infty}_{\local}(\TT, X) \times C^{\infty}_{\local}(\TT, X)$ of
\begin{align} \label{eq:transition_system}
	x(t) &= \Phi(t) x(0) + \int_{(0, t]} \Phi(t-s) u(s) ds \qquad \forall t \in \TT,
\end{align}
where $\Phi : \TT \rightarrow \Endom(X)$ is a semigroup on a Banach space. 
In the case $\TT = \ZZge{0}$, the integral is taken with respect to the counting measure, and $\mathcal{P}$ coincides with the discrete-time relation defined by the difference equation $x(k+1) = \Phi(1)x(k) + u(k+1)$. 
In the case $\TT = \RRge{0}$, we will see that $\mathcal{P}$ is the continuous-time relation defined by the differential equation $\dot{x} = Ax + u$, where $A$ is the generator of $\Phi$. 


The strong continuity assumption in \cref{def:semigroup_sc} ensures that given any $u \in C^{0}_{\local}(\RRge{0}, X)$ and $x(0) \in X$, the function $x$ defined by \cref{eq:transition_system} belongs to $C^{0}_{\local}(\RRge{0}, X)$. Continuity of $x$ follows from continuity of $\Phi(t-s) u(s)$ in $(t, s) \in \{0 \leq s \leq t\}$, which is a consequence of the following well-known lemma.

\begin{lemma}[{\cite[Thm. 2.2 in Chpt. 1]{pazy83}}] \label{lem:semigroup_bound} \stepcounter{refer}
	Let $\Phi : \TT \rightarrow \Endom(X)$ be a (strongly continuous) semigroup on a Banach space $X$, then $\sup_{t \in K} \lVert \Phi(t) \rVert < \infty$ for every compact $K \subseteq \TT$.
\end{lemma}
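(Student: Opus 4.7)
The discrete-time case $\TT=\ZZge{0}$ is immediate: any compact $K\subseteq\ZZge{0}$ is finite, so the supremum is a finite max. So the plan focuses on $\TT=\RRge{0}$, where strong continuity must be exploited.

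My strategy is the textbook one for $C_0$-semigroups: reduce the statement to local boundedness near $t=0$, and then propagate via the semigroup property. First I would prove the key lemma: there exist $\delta>0$ and $M\geq 1$ such that $\lVert\Phi(t)\rVert\leq M$ for all $t\in[0,\delta]$. Suppose for contradiction this fails. Then one can extract a sequence $t_n\to 0^+$ with $\lVert\Phi(t_n)\rVert\to\infty$. For each fixed $x\in X$, strong continuity gives $\Phi(t_n)x\to\Phi(0)x=x$ in $X$, so the sequence $(\Phi(t_n)x)_n$ is bounded in $X$. The family $\{\Phi(t_n)\}_n\subseteq\Endom(X)$ is thus pointwise bounded, and the Banach--Steinhaus (uniform boundedness) theorem yields $\sup_n\lVert\Phi(t_n)\rVert<\infty$, contradicting the choice of $t_n$.

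With local boundedness in hand, the rest is a direct computation. For any $t\geq 0$, write $t=k\delta+r$ with $k\in\ZZge{0}$ and $r\in[0,\delta)$; the semigroup identity gives $\Phi(t)=\Phi(\delta)^{k}\Phi(r)$, hence $\lVert\Phi(t)\rVert\leq M^{k+1}$. For a compact $K\subseteq\RRge{0}$, pick $T$ with $K\subseteq[0,T]$ and obtain the uniform bound $\sup_{t\in K}\lVert\Phi(t)\rVert\leq M^{\lfloor T/\delta\rfloor+1}<\infty$.

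The only delicate step is the contradiction argument invoking Banach--Steinhaus, which is where strong continuity at $t=0$ is truly used; the rest is bookkeeping with the semigroup law. The completeness of $X$ (as a Banach space) enters precisely here, since the uniform boundedness principle requires it.
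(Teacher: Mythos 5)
Your proof is correct, but it follows a different route from the paper. The paper applies the uniform boundedness principle \emph{once, directly to the whole family} $\{\Phi(t)\}_{t \in K}$: strong continuity makes $t \mapsto \Phi(t)x$ continuous on the compact set $K$, so its image is bounded in $X$ for each fixed $x$, and Banach--Steinhaus immediately gives $\sup_{t \in K}\lVert \Phi(t)\rVert < \infty$. Note that this argument never uses the semigroup law --- it works for any strongly continuous map $K \rightarrow \Endom(X)$. You instead take the standard textbook path for $C_0$-semigroups: a contradiction argument with Banach--Steinhaus to get boundedness on some $[0,\delta]$, followed by the factorization $\Phi(k\delta + r) = \Phi(\delta)^k\Phi(r)$ to propagate the bound to $[0,T]$. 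Your version is longer and does use the semigroup property essentially, but it buys something the paper's one-line argument does not: the explicit bound $\lVert\Phi(t)\rVert \leq M^{\lfloor t/\delta\rfloor + 1}$, i.e., the exponential growth estimate $\lVert\Phi(t)\rVert \leq M e^{\omega t}$, which is stronger than mere boundedness on compacta. Both arguments correctly locate where completeness of $X$ enters (the uniform boundedness principle), and your handling of the discrete case $\TT = \ZZge{0}$ as trivial is fine.
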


Another consequence of \cref{lem:semigroup_bound} is 

\begin{proposition} \label{pro:transition_closed} \stepcounter{refer}
	The relation defined by \cref{eq:transition_system}, where $\Phi$ is a semigroup, is closed.
\end{proposition}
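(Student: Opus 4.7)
The plan is to prove sequential closedness, which is equivalent to closedness since $C^{\infty}_{\local}(\TT, X) \times C^{\infty}_{\local}(\TT, X)$ is metrizable (a countable-semi-norm product of Fr\'{e}chet spaces). So I would take a sequence $(u_n, x_n) \in \mathcal{P}$ converging to $(u, x)$ in the product topology, and show $(u, x) \in \mathcal{P}$ by passing to the limit in \cref{eq:transition_system} for each fixed $t \in \TT$.

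The first step is to unpack the meaning of convergence in $C^{\infty}_{\local}(\TT, X)$: it entails uniform convergence on every compact subset of $\TT$, hence in particular $x_n(0) \to x(0)$ and $x_n(t) \to x(t)$ in $X$, and $u_n \to u$ uniformly on $[0, t]$. The term $\Phi(t) x_n(0)$ then converges to $\Phi(t) x(0)$ because $\Phi(t) \in \Endom(X)$ is bounded. For the convolution term, I would invoke \cref{lem:semigroup_bound} to set $M_t := \sup_{r \in [0, t]} \lVert \Phi(r) \rVert_{\Endom(X)} < \infty$, and then estimate
\begin{align*}
    \left\lVert \int_{[0, t]} \Phi(t-s)\bigl(u_n(s) - u(s)\bigr)\, ds \right\rVert_X \leq t\, M_t \sup_{s \in [0, t]} \lVert u_n(s) - u(s) \rVert_X,
\end{align*}
which vanishes as $n \to \infty$. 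Assembling both limits yields $x(t) = \Phi(t) x(0) + \int_{[0, t]} \Phi(t-s) u(s)\, ds$ for every $t \in \TT$, so $(u, x) \in \mathcal{P}$ as required.

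For the discrete-time case $\TT = \ZZge{0}$, the integral reduces to a finite sum $\sum_{s = 0}^{t} \Phi(t - s) u(s)$, and the same argument simplifies to a finite collection of termwise limits, each handled by continuity of the bounded operators $\Phi(0), \ldots, \Phi(t)$.

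I do not foresee a substantial obstacle: the uniform boundedness of the semigroup on compact intervals supplied by \cref{lem:semigroup_bound} is exactly what lets the integral (or sum) commute with the limit, and all other convergences are immediate from the definition of the $C^{\infty}_{\local}$ topology. The mild subtlety worth naming is merely bookkeeping---to remember that although the defining equation \cref{eq:transition_system} is a $C^{0}$-type identity, the limit $(u, x)$ already lives in $C^{\infty}_{\local}(\TT, X)^2$ by hypothesis, so no additional regularity needs to be established before concluding $(u, x) \in \mathcal{P}$.
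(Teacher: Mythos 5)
Your proposal is correct and takes essentially the same approach as the paper: for each fixed $t \in \TT$ one shows that the residual map $(u, x) \mapsto x(t) - \Phi(t)x(0) - \int_{[0,t]} \Phi(t-s) u(s)\,ds$ is continuous (you verify sequential continuity, which suffices by metrizability, using \cref{lem:semigroup_bound} to control the convolution), and then observes that $\mathcal{P}$ is the intersection of its kernels. The paper simply states the continuity of these maps and the kernel-intersection observation directly, without unfolding the sequential argument.
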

\begin{proof}
	For every $t \in \TT$, the map $C^{0}_{\local}(\TT, X) \times C^{0}_{\local}(\TT, X) \rightarrow X$ sending $(u, x)$ to $x(t) - \Phi(t) x(0) - \int_{[0,t]} \Phi(t-s) u(s) ds$ is continuous. 
	The relation is the intersection of the kernels of these continuous maps, restricted to $C^{\infty}_{\local} \times C^{\infty}_{\local}$.
\end{proof}

Next, we show that the continuous-time relation induced by \cref{eq:transition_system} coincides with the relation induced by the abstract Cauchy problem $\dot{x} = Ax + u$, where $A$ is the generator of $\Phi$. 
\change{
Some terminologies are needed to define the latter relation.
A pair $(u, x) \in C^{0}_{\local}(\RRge{0}, X) \times C^{0}_{\local}(\RRge{0}, X)$ is called a \define{classical admissible pair} of $\dot{x} = Ax + u$ if $x \in C^1_{\local}(\RRge{0}, X)$, $x(t) \in \domain(A)$, and the equation holds for all $t \geq 0$;
and $(u, x)$ is called a \define{mild admissible pair} of $\dot{x} = Ax + u$ if it satisfies \cref{eq:transition_system}.
These notions are related to \define{classical (resp. mild) solutions} of abstract Cauchy problems (e.g. \cite[Chpt. 3]{curtain95}):
a pair $(u, x)$ is a classical (resp. mild) admissible pair iff $u$ is continuous and $x$ is a classical (resp. mild) solution of $\dot{x} = Ax + u$.
}

\change{
The relation induced by $\dot{x} = Ax + u$ is defined as the set of classical admissible pairs $(u, x)$ in $C^{\infty}_{\local} \times C^{\infty}_{\local}$, 
and the relation induced by \cref{eq:transition_system} coincides with the set of mild admissible pairs $(u, x)$ in $C^{\infty}_{\local} \times C^{\infty}_{\local}$. 
}%
\begin{shownto}{SIAM}
	\change{The two relations are equal by the following standard fact from semigroup theory, whose proof is available in \cite[Lemma 4.4]{xia23}.}
\end{shownto}
\begin{shownto}{arXiv}
	The two relations are equal by the following standard fact from semigroup theory.
\end{shownto}

\begin{lemma} \label{pro:acp_classical_mild} \stepcounter{refer}
	Consider the abstract Cauchy problem $\dot{x} = Ax + u$ where $A$ generates the semigroup $e^{At}$.
	Every classical \change{admissible pair} $(u, x) \in C^{0}_{\local}(\RRge{0}, X) \times C^{0}_{\local}(\RRge{0}, X)$ is a mild \change{admissible pair}.
	Furthermore, the following statements are equivalent for every mild \change{admissible pair} $(u, x) \in \allowbreak C^{1}_{\local}(\RRge{0}, X) \times C^{0}_{\local}(\RRge{0}, X)$:
	\begin{enumerate}[(a)]
		\item\labellocal{item:a} $x \in C^{1}_{\local}(\RRge{0}, X)$.
		\item\labellocal{item:b} $x(0) \in \domain(A)$.
		\item\labellocal{item:c} $(u, x)$ is a classical \change{admissible pair}.
	\end{enumerate}
\end{lemma}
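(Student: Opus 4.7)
The plan is to first prove that classical solutions are mild solutions by the standard variation-of-constants trick, and then establish the equivalence of (a)--(c) via the cycle $(c) \Rightarrow (a) \Rightarrow (b) \Rightarrow (c)$. For the first statement, I would fix $t > 0$ and define $g(s) = \Phi(t-s) x(s)$ for $s \in [0, t]$. Since $x(s) \in \domain(A)$ for every $s$, the generator identity $\frac{d}{ds} \Phi(t-s) y = -\Phi(t-s) A y$ (valid for $y \in \domain(A)$) combined with $\dot{x}(s) = A x(s) + u(s)$ gives $\dot{g}(s) = \Phi(t-s) u(s)$. Integrating from $0$ to $t$ yields the mild formula \cref{eq:transition_system}.

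Turning to the cycle, $(c) \Rightarrow (a)$ is immediate from the definition of a classical solution. For $(a) \Rightarrow (b)$, I would rearrange the mild formula to obtain
\[
\frac{\Phi(h) x(0) - x(0)}{h} = \frac{x(h) - x(0)}{h} - \frac{1}{h}\int_0^h \Phi(h-s) u(s)\, ds.
\]
Under (a) the first term on the right tends to $\dot{x}(0)$ as $h \to 0^+$, while the second tends to $u(0)$ by continuity of $u$ together with $\sup_{s \in [0,1]} \lVert \Phi(s) \rVert < \infty$ from \cref{lem:semigroup_bound}. Hence the left-hand side has a limit, which means $x(0) \in \domain(A)$.

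For $(b) \Rightarrow (c)$, I would proceed in two steps. First, I would establish $x \in C^{1}_{\local}$: the map $t \mapsto \Phi(t) x(0)$ is $C^{1}_{\local}$ with derivative $\Phi(t) A x(0)$ because $x(0) \in \domain(A)$, while the convolution $\int_0^t \Phi(t-s) u(s)\, ds$, rewritten via $\sigma = t - s$ as $\int_0^t \Phi(\sigma) u(t-\sigma)\, d\sigma$, has $t$-derivative $\Phi(t) u(0) + \int_0^t \Phi(t-s) \dot{u}(s)\, ds$, continuous in $t$, using $u \in C^{1}_{\local}$. Second, once $x \in C^{1}_{\local}$, the semigroup-shift identity $x(t+h) = \Phi(h) x(t) + \int_t^{t+h} \Phi(t+h-s) u(s)\, ds$ (a direct consequence of the mild formula) combined with the argument of the previous paragraph at arbitrary $t \ge 0$ yields $(\Phi(h) x(t) - x(t))/h \to \dot{x}(t) - u(t)$, hence $x(t) \in \domain(A)$ and $\dot{x}(t) = A x(t) + u(t)$.

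The main technical obstacle is justifying the differentiation under the integral sign for $\int_0^t \Phi(\sigma) u(t-\sigma)\, d\sigma$. This reduces to showing $(u(t+h-\sigma) - u(t-\sigma))/h \to \dot{u}(t-\sigma)$ uniformly in $\sigma$ on compact intervals, which is a routine consequence of $u \in C^{1}_{\local}$ together with the local bound on $\Phi$ from \cref{lem:semigroup_bound}.
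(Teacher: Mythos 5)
Your proof is correct and uses the same basic ingredients as the paper (the variation-of-constants trick for classical $\Rightarrow$ mild, the derivative formula for the convolution $\int_0^t \Phi(t-s)u(s)\,ds$, and difference quotients of the semigroup), but the bookkeeping is organized a bit differently and is arguably cleaner. For (a) $\Rightarrow$ (b), the paper first differentiates the convolution term to conclude $e^{At}x(0)$ is $C^1$, then evaluates at $t=0$; you instead rearrange the mild formula and take a direct limit, which only uses continuity of $u$ and the local bound on $\Phi$ from \cref{lem:semigroup_bound}, not $u\in C^1_{\local}$. For (b) $\Rightarrow$ (c), the paper computes $A\bigl(\int_0^t e^{A(t-s)}u(s)\,ds\bigr)$ directly by a difference-quotient calculation; you first establish $x\in C^1_{\local}$ (which is where the convolution derivative does get used), then exploit the shift identity $x(t+h)=\Phi(h)x(t)+\int_t^{t+h}\Phi(t+h-s)u(s)\,ds$ to reapply your (a) $\Rightarrow$ (b) argument at each $t$. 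The net effect is the same, but your route makes the implication $x\in C^1_{\local}\Rightarrow x(t)\in\domain(A)$ for all $t$ a single reusable step rather than a fresh computation, and isolates the only place where differentiability of $u$ is really needed.
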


\begin{shownto}{arXiv}
\begin{proof}
	Let $(u, x)$ be a classical \change{admissible pair}, the function $[0, t] \rightarrow X : s \mapsto e^{A(t-s)} x(s)$ is continuous in $s$ and its derivative is $e^{A(t-s)} u(s)$ for every $s \in (0, t)$.
	\change{
	Integrating the derivative on $[0,t]$ yields $e^{A(t-t)}x(t) - e^{At}x(0)$, hence
	\begin{align} \labellocal{eq:1}
		x(t) &= e^{At} x(0) + \int_{0}^{t} e^{A(t-s)} u(s) ds \qquad \forall t \geq 0.
	\end{align}
	The pair $(u,x)$ therefore verifies the definition of a mild admissible pair.
	}
	
	\reflocal{item:a} $\Rightarrow$ \reflocal{item:b}. 
	\change{A mild admissible pair $(u, x)$ verifies \creflocal{eq:1}.
	The integral term is continuously differentiable because}
	\begin{align} \labellocal{eq:2}
		\partial_t \int_{0}^{t} e^{A(t-s)} u(s) ds = \partial_t \int_{0}^{t} e^{As} u(t - s) ds = e^{At} u(0) + \int_{0}^{t} e^{As} \dot{u}(t - s) ds
	\end{align}
	\change{is continuous in $t$.
	Assuming (a), the term $e^{At}x(0)$ in \creflocal{eq:1} must also be continuously differentiable.
	Differentiability of $e^{At}x(0)$ at $t = 0$ is equivalent to $x(0) \in \domain(A)$.
	}
	
	\reflocal{item:b} $\Rightarrow$ \reflocal{item:c}.
	\change{
	We know that $(u, x)$ solves \creflocal{eq:1}.
	By (b), 
	\begin{align*}
		Ae^{At}x(0) &= \lim_{h \rightarrow 0^+} \frac{e^{A(t+h)}x(0) - e^{At}x(0)}{h} = \partial_t e^{At}x(0).
	\end{align*}
	Furthermore, simple computations show
	\begin{align*}
		A \int_{0}^{t} e^{A(t-s)} u(s) ds &= \lim_{h \rightarrow 0^+} \frac{1}{h} \left( \int_{-h}^{t-h} e^{A(t-s)} u(s+h) ds - \int_{0}^{t} e^{A(t-s)} u(s) ds \right) \\
		&= e^{At} u(0) - u(t) + \int_{0}^{t} e^{A(t-s)} \dot{u}(s) ds \\
		&= -u(t) + \partial_t \int_{0}^{t} e^{A(t-s)} u(s) ds,
	\end{align*}
	where the second line uses the decomposition $\int_{-h}^{t-h} = \int_{-h}^{0} - \int_{t-h}^{t} + \int_{0}^{t}$, and the third line follows from \creflocal{eq:2}.
	We conclude that $Ax(t)$ exists and $\dot{x}(t) = Ax(t) + u(t)$.
	}
\end{proof}
\end{shownto}

A consequence of \cref{pro:acp_classical_mild} is the following characterization for smoothness of the mild \change{admissible pairs} of $\dot{x} = Ax + u$.

\begin{lemma} \label{pro:acp_smoothness} \stepcounter{refer}
	\change{Let $(u, x)$ be a mild admissible pair of $\dot{x} = Ax + u$, where $A$ generates a semigroup and $u \in C^{\infty}_{\local}(\RRge{0}, X)$.}
	For any $k \in \ZZge{1}$, $x \in C^{k}_{\local}(\RRge{0}, X)$ if and only if $(u, x)$ satisfies the $(k-1)$th-order compatibility condition:
	\begin{align*}
		x_0 &:= x(0) \in \domain(A), \\
		x_1 &:= Ax_0 + u(0) \in \domain(A), \\
		&\vdots \\
		x_k &:= Ax_{k-1} + \partial_t^{k-1} u(0) \in \domain(A).
	\end{align*}
\end{lemma}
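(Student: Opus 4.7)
The argument is by induction on $k \geq 1$. The base case $k = 1$ is already contained in \cref{pro:acp_classical_mild}: the equivalence (b)$\Leftrightarrow$(c) there says that a mild solution $(u,x)$ belongs to $C^1_{\local}(\RRge{0}, X)$ exactly when $x_0 := x(0) \in \domain(A)$, and in that case $(u,x)$ is a classical solution, so that $\dot{x}(0) = A x_0 + u(0) = x_1$.

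The workhorse of the induction is the observation that if $(u,x)$ is a mild solution of $\dot{x} = A x + u$, then $(\dot{u}, \dot{x})$ is also a mild solution of the same abstract Cauchy problem, with initial value $\dot{x}(0) = x_1$ in place of $x_0$. This follows from the closedness of the mild-solution relation (\cref{pro:transition_closed}), applied exactly as in the proof of \cref{pro:relation_closed}: the difference quotients $u_\epsilon := (u(\cdot + \epsilon) - u)/\epsilon$ and $x_\epsilon := (x(\cdot + \epsilon) - x)/\epsilon$ satisfy \cref{eq:transition_system} by translation invariance and linearity, and converge to $(\dot{u}, \dot{x})$ in $C^{\infty}_{\local}(\RRge{0}, X) \times C^{\infty}_{\local}(\RRge{0}, X)$ as $\epsilon \to 0^+$, so the limit lies in the relation.

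With these ingredients the inductive step from $k$ to $k+1$ is immediate. For $(\Rightarrow)$, if $x \in C^{k+1}_{\local}$ then $x \in C^1_{\local}$, so the base case gives $x_0 \in \domain(A)$ and $\dot{x}(0) = x_1$; since $\dot{x} \in C^k_{\local}$, the inductive hypothesis applied to the mild solution $(\dot{u}, \dot{x})$ with initial value $x_1$ yields the chain $x_1, x_2, \ldots, x_{k+1} \in \domain(A)$, which combined with $x_0 \in \domain(A)$ is the full $k$th-order compatibility for $(u,x)$. For $(\Leftarrow)$, the first condition $x_0 \in \domain(A)$ gives $x \in C^1_{\local}$ by the base case, and the remaining conditions $x_1, x_2, \ldots, x_{k+1} \in \domain(A)$ are exactly the $(k-1)$th-order compatibility for the mild solution $(\dot{u}, \dot{x})$; by the inductive hypothesis, $\dot{x} \in C^k_{\local}$, i.e., $x \in C^{k+1}_{\local}$.

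The only nontrivial point is the closure-based argument that differentiation preserves being a mild solution; once this is established, the rest is bookkeeping, since the recursion $x_j = A x_{j-1} + \partial_t^{j-1} u(0)$ agrees with $\partial_t^j x(0)$ whenever the preceding derivatives exist, so shifting indices by one corresponds exactly to translating the compatibility chain by one step.
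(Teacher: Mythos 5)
Your proof takes the same inductive approach as the paper: the workhorse fact---that differentiating a $C^1$ mild solution $(u,x)$ yields a mild solution $(\dot u,\dot x)$ of the same problem with initial value $x_1 = Ax(0)+u(0)$---is exactly the paper's inductive step, though the paper obtains it by differentiating the Duhamel formula \cref{eq:transition_system} directly (the computation is already contained in the proof of \cref{pro:acp_classical_mild}), while you re-derive it via the difference-quotient closure argument of \cref{pro:relation_closed}. Both routes are fine; just note that \cref{pro:relation_closed} as stated concerns the smooth-pair relation, so what you actually need is the same argument applied to \cref{eq:transition_system} viewed as a closed constraint in $C^0_{\local}\times C^0_{\local}$ (since $x$ is only known to be $C^1$ at that point), which works because the defining maps are continuous there. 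One small index slip: since the base case gives $x\in C^1_{\local}$ iff $x_0\in\domain(A)$, the inductive hypothesis applied to $\dot x\in C^k_{\local}$ with initial value $x_1$ should yield $x_1,\dots,x_k\in\domain(A)$, not $x_1,\dots,x_{k+1}$; this off-by-one appears to echo the final display line of the lemma's statement, which also indexes one step too far.
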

\begin{proof}
	The case $k = 1$ is proved in \cref{pro:acp_classical_mild}. 
	Observe that if $(u, x) \in C^{1}_{\local} \times C^{1}_{\local}$ solves \cref{eq:transition_system}, then $(v, y) = (\dot{u}, \dot{x})$ is a mild \change{admissible pair} of $\dot{y} = Ay + v$ with initial condition $y(0) = Ax(0) + u(0)$.
	The conclusion follows by induction on $k$.
\end{proof}


Leveraging \cref{pro:acp_smoothness}, we show that relations defined by the abstract Cauchy problems $\dot{x} = Ax + u$ are well-defined.

\begin{proposition} \label{pro:acp_smooth_dense} \stepcounter{refer}
	Let $\mathcal{P}$ be the $X$-relation defined by $\dot{x} = Ax + u$, where $A$ generates a semigroup.
	For any $u \in C^{\infty}_{\local}(\RRge{0}, X)$, the set $S_{u} := \{x(0) \mid (u, x) \in \mathcal{P}\}$ is dense in $X$.
\end{proposition}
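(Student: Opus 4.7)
The plan is to identify $S_u$ as an affine translate of the smooth-vector subspace $X_\infty := \bigcap_{k \geq 0} \domain(A^k)$ of $X$, and then invoke density of $X_\infty$. First, I would verify that if $S_u$ is non-empty, then $S_u = x_0^* + X_\infty$ for any fixed $x_0^* \in S_u$. Indeed, given any two smooth trajectories $x, x^*$ with $(u, x), (u, x^*) \in \mathcal{P}$, linearity gives $(0, x - x^*) \in \mathcal{P}$, so $z := x - x^*$ is a $C^\infty_{\local}$ trajectory of $\dot z = Az$ with $z(t) = e^{At} z(0)$. Iterated application of \cref{pro:acp_smoothness} with zero input forces $A^k z(0) \in \domain(A)$ for every $k$, i.e., $z(0) \in X_\infty$. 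Conversely, for any $z_0 \in X_\infty$, the trajectory $t \mapsto e^{At} z_0$ is in $C^\infty_{\local}$ (since $\partial_t^k e^{At} z_0 = e^{At} A^k z_0$ is continuous for all $k$), and adding it to $x^*$ yields another smooth trajectory with initial datum $x_0^* + z_0$.

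Next, density of $X_\infty$ in $X$ follows from standard mollification. For $x \in X$ and $\phi \in C^\infty_c((0,\infty))$ with $\int \phi = 1$ supported in a small interval $(0, \delta)$, the mollified vector $x_\phi := \int \phi(s) e^{As} x \, ds$ lies in $\domain(A)$ with $A x_\phi = -\int \phi'(s) e^{As} x \, ds$; this identity is obtained from the limit of $(e^{Ah} x_\phi - x_\phi)/h$ after a change of variables, with no boundary terms since $\phi$ has compact support in $(0,\infty)$. Iterating yields $A^k x_\phi = (-1)^k \int \phi^{(k)}(s) e^{As} x \, ds \in X$ for every $k$, so $x_\phi \in X_\infty$, and strong continuity of the semigroup then gives $x_\phi \to x$ in $X$ as $\delta \to 0$.

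The main obstacle, and the real work of the proof, is producing a single $x_0^* \in S_u$: this requires satisfying the full infinite hierarchy of compatibility conditions $x_0,\ Ax_0 + u(0),\ A(Ax_0 + u(0)) + \dot u(0),\ \ldots \in \domain(A)$ from \cref{pro:acp_smoothness}. My plan is to exploit the resolvent $R(\lambda, A) = (\lambda I - A)^{-1}$ for $\lambda$ in the resolvent set of $A$: since $\lambda I - A$ maps $\domain(A)$ bijectively onto $X$, in particular $X = \domain(A) + \Ima(A)$, so the first-order condition $Ax_0 + u(0) \in \domain(A)$ can always be satisfied by writing $u(0) = d - A x_0$ with $d, x_0 \in \domain(A)$. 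The full hierarchy is then handled by an iterative refinement that maintains such decompositions at every order; the technical challenge is orchestrating these choices coherently while retaining enough flexibility to place the resulting $x_0^*$ arbitrarily close to any target in $X$—at which point density of $S_u = x_0^* + X_\infty$ is immediate from the previous paragraph.
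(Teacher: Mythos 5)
Your structural reduction is elegant and correct: once any single $x_0^* \in S_u$ exists, the difference of two smooth trajectories under the same input is a smooth solution of $\dot z = Az$, so \cref{pro:acp_smoothness} with zero input forces $z(0) \in X_\infty := \bigcap_k \domain(A^k)$, giving $S_u = x_0^* + X_\infty$; and the mollification argument for density of $X_\infty$ is standard and sound. (You do not, incidentally, need to place $x_0^*$ near any target — a coset of a dense subspace is automatically dense — so that last clause about ``enough flexibility'' is a red herring.) The problem is that the entire weight of the proposition rests on producing that one $x_0^*$, i.e., on showing $S_u \neq \emptyset$, and your proposal explicitly stops there with a sketch that does not close.

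The resolvent idea does not straightforwardly survive past the first level. Taking $x_0 = (\lambda - A)^{-1} u(0)$ certainly gives $A x_0 + u(0) = \lambda x_0 \in \domain(A)$; but the next condition $A(\lambda x_0) + \dot u(0) \in \domain(A)$ reduces (using $A x_0 = \lambda x_0 - u(0)$ and $x_0 \in \domain(A)$) to $\dot u(0) - \lambda u(0) \in \domain(A)$, which is a genuine constraint on $u$ with no freedom left in $x_0$ to absorb it. Each further level adds another constraint, and there is no obvious convergent iteration that discharges the whole infinite hierarchy. Note that since density implies non-emptiness, you have reproved the statement \emph{modulo} its hardest part rather than established it.

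The paper's proof routes around the hierarchy entirely. Rather than trying to satisfy the compatibility conditions at $t = 0$, it extends $u$ to negative time by Borel's lemma, multiplies by a cutoff $\eta_\epsilon$ vanishing near $t = -\epsilon$, so that at the shifted start time the compatibility conditions become the input-free ones (hence $S_{\eta_\epsilon u}$ at time $-\epsilon$ equals $S_0$, which is dense by the mollification you also use), and then flows forward to $t = 0$ and sends $\epsilon \to 0$. The flow map $a \mapsto e^{A\epsilon} a + \int_{-\epsilon}^0 e^{-As}\eta_\epsilon(s) u(s)\,ds$ converges to the identity, so the closure of $S_u$ contains the dense set $S_0$. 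This is a different and genuinely load-bearing idea that your proposal is missing; if you can fill the non-emptiness gap by any means, your coset reduction would give a slightly cleaner statement (explicitly identifying $S_u$ as an affine translate of $X_\infty$), but as written the proposal defers exactly the step the proposition is really about.
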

\begin{proof}
	The proof is divided into three cases: a) $u = 0$; b) $u(t) = 0$ near $t = 0$; c) general $u$.
	\change{
	For any $\bar{x} \in X$ and\footnote{The inclusion $\eta \in C^{\infty}_{c}(\RRg{0}, \RR)$ means that $\eta$ is supported on a compact subset of $\RRg{0}$, so it is zero on near $t = 0$.} $\eta \in C^{\infty}_{c}(\RRg{0}, \RR)$,
	define $x(t) = \int_{0}^{\infty} \eta(s) e^{A(t+s)} \bar{x} ds$.
	The pair $(0, x)$ is clearly a mild admissible pair of $\dot{x} = Ax + u$.
	Since $\bar{x} \in X$ is arbitrary and $\eta$ can be an approximate identity, $S_0$ is dense in $X$, proving case (a).
	By \cref{pro:acp_smoothness}, smoothness of a mild admissible pair $(u, x)$ depends only on its behaviour near $t = 0$.
	Therefore, for any $\bar{x} \in S_0$ and any smooth input $u$ equal to zero near $t = 0$, the unique mild admissible pair $(u, x)$ with initial condition $x(0) = \bar{x}$ is smooth.
	We conclude that $S_0 \subseteq S_u$, which proves (b).
	}
	In case (c), 
	extend $u$ to an element of $C^{\infty}_{\local}(\RR, X)$, also denoted by $u$ (the extension exists by 
	\begin{shownto}{SIAM}
		Borel's lemma \cite[Lemma 8.5]{xia23} 
	\end{shownto}
	\begin{shownto}{arXiv}
		\cref{lem:borel} in \Cref{sect:appendix}
	\end{shownto}
	). 
	For any $\epsilon > 0$, let $\eta_{\epsilon}$ be a smooth bump function equal to $0$ on $\RRle{-\epsilon}$ and equal to $1$ on $\RRge{0}$. Since $\eta_{\epsilon}(t)u(t) = 0$ near $t = -\epsilon$ and solutions $x \in C^{\infty}_{\local}(\RRge{-\epsilon}, X)$ of $\dot{x} = Ax + \eta_{\epsilon}u$ restrict to smooth solutions on $\RRge{0}$, we deduce that $e^{A\epsilon}a + \int_{-\epsilon}^{0}e^{-As} \eta_{\epsilon}(s)u(s) ds \in S_u$ for all $x(-\epsilon) \in S_{0}$. 
	Since $\lim_{\epsilon \rightarrow 0} e^{A\epsilon} a = a$ and 
	\begin{align*}
		\left\| \int_{-\epsilon}^{0}e^{-As} \eta_{\epsilon}(s)u(s) ds \right\| &\lesssim \epsilon \|u\|_{L^1([-1,0], X)} \qquad \text{uniformly in } \epsilon \in (0, 1)
	\end{align*}
	by \cref{lem:semigroup_bound}, we conclude that the closure of $S_{u}$ contains $S_{0}$.
\end{proof}

\begin{corollary} \label{cor:transition_welldefined} \stepcounter{refer}
	Any discrete-time or continuous-time relation defined by \cref{eq:transition_system} is well-defined.
\end{corollary}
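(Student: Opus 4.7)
The plan is to treat the two cases separately, observing that most of the work has already been done in \cref{pro:acp_smooth_dense}.

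For the discrete-time case $\TT = \ZZge{0}$, the footnote convention $C^{\infty}_{\local}(\ZZge{0}) = C^{0}_{\local}(\ZZge{0})$ renders the smoothness requirement vacuous. Given any input sequence $u \in C^{0}_{\local}(\ZZge{0}, X)$, choose an arbitrary $x(0) \in X$ (e.g., $x(0) = 0$). Then \cref{eq:transition_system} evaluated at integer times coincides with the recurrence $x(k+1) = \Phi(1) x(k) + u(k+1)$, which uniquely defines a sequence $x \in C^{0}_{\local}(\ZZge{0}, X)$. Hence $(u, x) \in \mathcal{P}$, so $\mathcal{P}$ is well-defined.

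For the continuous-time case $\TT = \RRge{0}$, I would invoke \cref{pro:acp_smooth_dense} directly. Fix any $u \in C^{\infty}_{\local}(\RRge{0}, X)$. The proposition states that the set
\[
	S_u := \{x(0) \mid (u, x) \in \mathcal{P}\}
\]
is dense in $X$. In particular $S_u$ is nonempty, so there exists some $\bar{x} \in X$ and a corresponding $x \in C^{\infty}_{\local}(\RRge{0}, X)$ with $x(0) = \bar{x}$ such that $(u, x)$ satisfies \cref{eq:transition_system}. This is exactly the well-definedness of $\mathcal{P}$.

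I do not foresee any substantive obstacle; the only subtlety worth flagging is the distinction between mild and smooth solutions of \cref{eq:transition_system}. Since $\mathcal{P}$ consists of smooth pairs, one must ensure that for a given smooth $u$ some initial condition $x(0)$ yields a smooth mild solution, not merely a continuous one. This is precisely the content of the density statement in \cref{pro:acp_smooth_dense} (which in turn relies on the compatibility-condition characterization of smoothness from \cref{pro:acp_smoothness}), and no further argument is needed here.
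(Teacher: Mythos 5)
Your proposal is correct and follows exactly the route the paper intends: the continuous-time case is an immediate consequence of the density statement in \cref{pro:acp_smooth_dense} (a dense subset of the nonempty space $X$ is nonempty), and the discrete-time case reduces to the recurrence $x(k+1)=\Phi(1)x(k)+u(k+1)$, which is solvable from any initial condition with no smoothness issue by the convention $C^{\infty}_{\local}(\ZZge{0})=C^{0}_{\local}(\ZZge{0})$. Your flag about needing a smooth (not merely mild) solution for smooth $u$ is exactly the point that \cref{pro:acp_smooth_dense} is designed to settle, so nothing further is required.
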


The next result guarantees that, if the transition semigroup is compact, the relation defined by \cref{eq:transition_system} is \prop{}.    

\begin{theorem} \label{pro:transition_input_resilient} \stepcounter{refer}
	Let $\mathcal{P}$ be the relation defined by \cref{eq:transition_system},
	where $\Phi : \TT \rightarrow \Endom(X)$ is a semigroup.
	If $\Phi(1)$ is compact, then the following are equivalent:
	\begin{enumerate}[(a)]
		\item\labellocal{item:a} $\mathcal{P}$ is stable.
		\item\labellocal{item:b} If $(0, x) \in \mathcal{P}$ and $x(\cdot + 1) = \lambda x(\cdot)$ for some $|\lambda| \geq 1$, then $x = 0$ as a function.
		\item\labellocal{item:c} $\Phi(1)$ has no eigenvalue outside the open unit disk.
		\item\labellocal{item:d} $\mathcal{P}$ is VIVO.
		\item\labellocal{item:e} $\mathcal{P}$ is smoothly VIVO.
	\end{enumerate}
\end{theorem}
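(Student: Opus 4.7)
The plan is to establish the cycle
(a)$\Rightarrow$(b)$\Rightarrow$(c)$\Rightarrow$(d)$\Rightarrow$(e)$\Rightarrow$(a). The step (a)$\Rightarrow$(b) is a one-line argument: iterating $x(\cdot + 1) = \lambda x(\cdot)$ gives $x(t+n) = \lambda^n x(t)$ for all $n$, and vanishing of $x$ combined with $|\lambda| \geq 1$ forces $x \equiv 0$. The step (e)$\Rightarrow$(a) is just the special case $u = 0$, which is trivially smoothly vanishing, so smoothly VIVO yields vanishing output. The step (d)$\Rightarrow$(e) is a direct appeal to \cref{pro:transition_closed} (the relation is closed) combined with \cref{pro:relation_closed}. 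What remains are the two substantive steps (b)$\Rightarrow$(c) and (c)$\Rightarrow$(d).

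For (b)$\Rightarrow$(c) I argue the contrapositive. Suppose $\Phi(1)$ has an eigenvalue $\lambda$ with $|\lambda| \geq 1$ and eigenvector $v \neq 0$; the natural candidate to witness a violation of (b) is $x(t) := \Phi(t) v$, for which the functional equation $x(t+1) = \Phi(t)\Phi(1)v = \lambda x(t)$ is immediate and $x(0) = v \neq 0$. The difficulty is smoothness in $t$: an arbitrary eigenvector need not lie in $\bigcap_n \domain(A^n)$, so $\Phi(\cdot) v$ is only continuous a priori. Compactness of $\Phi(1)$ is essential here: the eigenspace $E_\lambda = \ker(\Phi(1) - \lambda I)$ is finite-dimensional, and since every $\Phi(t)$ commutes with $\Phi(1)$ it preserves $E_\lambda$. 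Hence $\Phi|_{E_\lambda}$ is a strongly continuous semigroup on a finite-dimensional space, which is automatically of the form $e^{t\tilde A}$ for some $\tilde{A} \in \Endom(E_\lambda)$, making $\Phi(\cdot)v$ smooth. In the discrete case, smoothness is vacuous by the convention on $C^\infty_\local(\ZZge{0}, X)$, and the same $x(k) = \Phi(1)^k v = \lambda^k v$ works directly.

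For (c)$\Rightarrow$(d), compactness implies the nonzero spectrum of $\Phi(1)$ accumulates only at $0$, so (c) forces $r(\Phi(1)) < 1$ strictly. Gelfand's formula applied to $\Phi(n) = \Phi(1)^n$, combined with $\sup_{t \in [0,1]} \|\Phi(t)\| < \infty$ from \cref{lem:semigroup_bound}, then yields $\|\Phi(t)\| \lesssim \rho^t$ for some $\rho \in (0,1)$ uniform in $t \geq 0$. Given a vanishing input $u$, the first term $\Phi(t)x(0)$ of \cref{eq:transition_system} vanishes, and the convolution is split at a threshold $T$: on $[0,T]$ the bounded factor $\|u\|$ multiplies the kernel $\|\Phi(t-s)\|$, which tends to $0$ uniformly in $s \in [0,T]$ as $t \to \infty$; on $[T,t]$ the small factor $\sup_{s \geq T}\|u(s)\|$ multiplies the integrable bound $\int_0^\infty \|\Phi(r)\|\,dr < \infty$. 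A standard $\epsilon$-argument then shows $x(t) \to 0$, giving VIVO. The main obstacle is (b)$\Rightarrow$(c), since one must convert a bare eigenvector into a time-smooth solution without any a priori regularity hypothesis, and the finite-dimensionality of $E_\lambda$ supplied by compactness is precisely what makes the smoothness obstruction disappear.
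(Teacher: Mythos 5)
Your proof is correct, and its overall architecture is the same as the paper's: the cycle through (a)--(e), with (a)$\Rightarrow$(b) and (e)$\Rightarrow$(a) trivial, (d)$\Rightarrow$(e) by closedness via \cref{pro:transition_closed} and \cref{pro:relation_closed}, and (c)$\Rightarrow$(d) by the spectral radius formula, \cref{lem:semigroup_bound}, and the convolution estimate. The one place where you genuinely diverge is the smoothness issue in (b)$\Rightarrow$(c). The paper keeps the bare eigenvector $\bar{x}$ and repairs the lack of time-regularity of $t \mapsto \Phi(t)\bar{x}$ by mollifying in time, $x(t) = \int_0^\infty \Phi(t+s)\bar{x}\,\eta(s)\,ds$ with $\eta \in C^\infty_c(\RRg{0})$, choosing $\eta$ so that $x \neq 0$; the relation $x(\cdot+1)=\lambda x(\cdot)$ survives mollification since $\Phi(1)$ commutes with every $\Phi(t)$. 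You instead observe that compactness makes $\Ker(\Phi(1)-\lambda)$ finite-dimensional and $\Phi(t)$-invariant, so the restricted semigroup is automatically norm-continuous, hence $e^{t\tilde{A}}$ for a bounded $\tilde A$, and $\Phi(\cdot)v$ is already smooth. Both arguments are sound. The mollification trick is more robust (it needs no finite-dimensionality and is reused elsewhere in the paper, e.g.\ in \cref{pro:acp_smooth_dense}), whereas your eigenspace reduction exploits compactness more directly and delivers the witness $x(t)=\Phi(t)v$ in exact, unmollified form. Either way the exhibited $x$ satisfies $(0,x)\in\mathcal{P}$, $x(\cdot+1)=\lambda x(\cdot)$, and $x\neq 0$, which is what the contrapositive of (b) requires.
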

\begin{proof}
	\reflocal{item:b} $\Rightarrow$ \reflocal{item:c}.
	Assume for a contradiction that $\Phi(1)$ has an eigenvector $\bar{x} \in X$ corresponding to an eigenvalue $|\lambda| \geq 1$.
	In the case $\mathcal{P}$ is discrete-time, $x(k) = \Phi(k) \bar{x} = \lambda^k \bar{x}$ satisfies $(0, x) \in \mathcal{P}$.
	In the case $\mathcal{P}$ is continuous-time, define the continuous function $\tilde{x}(t) = \Phi(t)\bar{x}$.
	Pick any $\eta \in C^{\infty}_{c}(\RRg{0}, \RR)$, define the mollified function $x(t) = \int_{0}^{\infty} \tilde{x}(t + s) \eta(s) ds$, then $(0, x) \in \mathcal{P}$.
	Since $\tilde{x} \neq 0$, there exists $\eta$ such that $x \neq 0$.
	
	\reflocal{item:c} $\Rightarrow$ \reflocal{item:d}.
	Since non-zero spectra of compact operators are eigenvalues \cite[VII.7.1]{conway90}, the spectral radius of $\Phi(1)$ is strictly less than $1$. 
	By the spectral radius formula \cite[VII.3.8]{conway90}, $\lVert \Phi(k) \rVert^{1/k}$ converges to the spectral radius, hence the function $\ZZge{0} \rightarrow \RR : k \mapsto \lVert \Phi(k) \rVert$ decays exponentially. 
	By \cref{lem:semigroup_bound}, $\TT \rightarrow \RR : t \mapsto \lVert \Phi(t) \rVert$ decays exponentially \change{and belongs to $L^1$}.
	Because the convolution of an $L^1$ function with a vanishing function is vanishing, we conclude \change{by \cref{eq:transition_system}} that $\mathcal{P}$ is VIVO.
	
	\reflocal{item:d} $\Rightarrow$ \reflocal{item:e}
	follows from \cref{pro:relation_closed} \change{because $\mathcal{P}$ is closed} by \cref{pro:transition_closed}.
\end{proof}

Now we consider the following extension of \cref{eq:transition_system}
\begin{align} \label{eq:transition_io_system}
	\begin{aligned}
	x(t) &= \Phi(t) x(0) + \int_{(0, t]} \Phi(t-s) B u(s) ds, \\
	y(t) &= C x(t) + D u(t).
	\end{aligned}
\end{align}
where $\Phi : \TT \rightarrow \Endom(X)$ is a semigroup, and $B$, $C$, $D$ are continuous linear operators.
The relation $\mathcal{P}$ defined by \cref{eq:transition_io_system} with input $u$ and output $y$ consists of pairs $(u, y)$ that appear in smooth solutions $(u, x, y)$ of \cref{eq:transition_io_system}.
We note that $\mathcal{P}$ is the relation defined by $x(k + 1) = \Phi(1) x(k) + B u(k+1),\, y(k) = Cx(k) + Du(k)$ when $\TT = \ZZge{0}$. If $\TT = \RRge{0}$, $\mathcal{P}$ is the relation defined by 
\begin{align}\label{eq:io cauchy}
		\left\{
		\begin{aligned}
			\dot{x} &= Ax + Bu, \\
			y &= Cx +Du,
		\end{aligned}
		\right.
	\end{align}
where $A$ is the generator of $\Phi$.
Since $\mathcal{P}$ is the composition of the relation defined by \cref{eq:transition_system} with relations defined by continuous linear operators, $\mathcal{P}$ remains well-defined and a result analogous to \cref{pro:transition_input_resilient} is obtained.

\begin{theorem} \label{pro:transition_io_input_resilient} \stepcounter{refer}
	Let $\mathcal{P}$ be the relation defined by \cref{eq:transition_io_system},
	where $\Phi$ is a semigroup and $B$, $C$, $D$ are continuous linear operators.
	If $\Phi(1)$ is compact, then the following are equivalent:
	\begin{enumerate}[(a)]
		\item\labellocal{item:a} $\mathcal{P}$ is stable.
		\item\labellocal{item:c} \change{Every generalized eigenvector of $\Phi(1)$ associated to an eigenvalue outside the open unit disk} belongs to the unobservable subspace\footnote{Unobservable subspace of $(\Phi, C)$ is the largest subspace of $\Ker C$ invariant under every $\Phi(t)$.} of $(\Phi, C)$.
		\item\labellocal{item:d} $\mathcal{P}$ is VIVO.
		\item\labellocal{item:e} $\mathcal{P}$ is smoothly VIVO.
	\end{enumerate}
\end{theorem}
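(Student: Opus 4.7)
The plan is to establish the cyclic chain $(a) \Rightarrow (b) \Rightarrow (c) \Rightarrow (d) \Rightarrow (a)$, adapting the strategy of \cref{pro:transition_input_resilient} to account for the output map $C$ and the feedthrough $D$. The key new feature relative to the state-only case is that unstable modes of $\Phi(1)$ are now permissible, provided they live in the $\Phi$-invariant part of $\Ker C$; everything is arranged so that the "bad" dynamics never reaches the output.

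For $(a) \Rightarrow (b)$ I argue by contrapositive. Given an unstable generalized eigenvector $v$ of $\Phi(1)$ that is not in the unobservable subspace $N$ of $(\Phi, C)$, the $\Phi(1)$-cyclic subspace $V$ generated by $v$ is finite-dimensional (by compactness of $\Phi(1)$ and the Jordan structure on its unstable generalized eigenspaces) and invariant under every $\Phi(t)$. The restriction $\Phi(t)|_V$ is therefore a finite-dimensional matrix semigroup, hence $C^{\infty}$ in $t$, so $x(t) := \Phi(t) v$ lies in $C^{\infty}_{\local}(\TT, X)$ and $(0, x)$ is a mild solution, placing $(0, Cx) \in \mathcal{P}$. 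On $V$ the output $Cx(t)$ is a quasi-polynomial $\sum_i p_i(t) e^{\mu_i t}$ (continuous time, with $\re \mu_i \geq 0$) or $\sum_i p_i(k) \lambda_i^k$ (discrete time, with $\lvert \lambda_i \rvert \geq 1$), and $v \notin N$ ensures it is not identically zero. A standard linear-independence-of-exponentials argument (equivalently, non-vanishing of Bohr almost-periodic means) then shows such an expression cannot tend to $0$, contradicting stability.

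For $(b) \Rightarrow (c)$, compactness of $\Phi(1)$ confines its unstable spectrum to finitely many eigenvalues with finite-dimensional generalized eigenspaces, and the associated Riesz spectral projection $P$ yields a decomposition $X = V \oplus W$ with $V = PX$ finite-dimensional, both $V$ and $W$ invariant under $\Phi$, and $P$ commuting with every $\Phi(t)$. On $W$ the spectral radius of $\Phi(1)|_W$ is strictly less than $1$, so $\lVert \Phi(t)|_W \rVert$ decays exponentially by the spectral radius formula combined with \cref{lem:semigroup_bound}, exactly as in the proof of \cref{pro:transition_input_resilient}. Hypothesis $(b)$ gives $V \subseteq N \subseteq \Ker C$, whence $CP = 0$ and therefore $C\Phi(t)P = CP\Phi(t) = 0$ for all $t$. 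Decomposing the mild solution $x = Px + (I - P) x$, the $P$-part contributes nothing to $Cx(t)$, while the $(I - P)$-part is a convolution of an exponentially decaying kernel with a vanishing input, hence vanishing; together with $Du(t) \to 0$ this delivers $y(t) \to 0$.

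Finally, $(c) \Rightarrow (d)$ follows because the underlying mild-solution state relation is closed (\cref{pro:transition_closed}), so \cref{pro:relation_closed} supplies $(\dot{u}, \dot{x})$ in that relation whenever $(u, x)$ is, and $\dot{y} = C\dot{x} + D\dot{u}$ then places $(\dot{u}, \dot{y}) \in \mathcal{P}$; iterating promotes VIVO to smoothly VIVO. The implication $(d) \Rightarrow (a)$ is immediate upon choosing $u \equiv 0$, which is trivially smoothly vanishing. The most delicate step is the quasi-polynomial non-vanishing claim in $(a) \Rightarrow (b)$: the strictly unstable and strictly stable modes are routine, but modes sitting on the imaginary axis (respectively on the unit circle) require genuine care to rule out fine oscillatory cancellation as $t \to \infty$.
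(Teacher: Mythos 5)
Your cycle $(a)\Rightarrow(b)\Rightarrow(c)\Rightarrow(d)\Rightarrow(a)$ is structurally sound and follows a genuinely different route from the paper's. The paper quotients $X$ by the unobservable subspace, uses \cref{lem:compact_quotient} to translate the generalized-eigenvector condition into ``the quotient of $\Phi(1)$ has no eigenvalue outside the open unit disk,'' and then reduces both directions to \cref{pro:transition_input_resilient} applied to the observable quotient system; in particular its non-stability direction runs on a \emph{genuine} eigenvector of the quotient, for which $y(t+k)=\lambda^k y(t)$ makes non-vanishing immediate. You avoid the quotient entirely: for $(b)\Rightarrow(c)$ you split $X$ by the Riesz projection $P$ onto the finite-dimensional unstable spectral subspace, note that $(b)$ forces $CP=0$ while $\Phi$ restricted to $\Ker P$ decays exponentially, so the output only sees the stable part. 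That argument is correct and arguably more transparent than the paper's. The steps $(c)\Rightarrow(d)$ via closedness and $(d)\Rightarrow(a)$ are fine.

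The one genuine soft spot is exactly the step you flag: in $(a)\Rightarrow(b)$ you reduce to the claim that a nonzero quasi-polynomial $\sum_i p_i(t)e^{\mu_i t}$ with $\re\mu_i\ge 0$ cannot vanish at infinity, and you leave the boundary case $\re\mu_i=0$ to an unproven appeal to almost-periodic means. As written this is a gap, but a self-inflicted one, because your $v$ is a generalized eigenvector for a \emph{single} eigenvalue $\lambda$ of $\Phi(1)$. First replace your ``$\Phi(1)$-cyclic subspace'' by the full generalized eigenspace $V$ of $\lambda$ (the cyclic subspace need not be $\Phi(t)$-invariant for non-integer $t$, so this correction is needed regardless). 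Then pick $\bar t$ with $y(\bar t)=C\Phi(\bar t)v\neq 0$, possible since $v\notin N$, and sample at times $\bar t+k$, $k\in\ZZge{0}$: setting $w=\Phi(\bar t)v$, again a generalized eigenvector for $\lambda$, and $J=\Phi(1)-\lambda$ nilpotent on $V$, one gets $y(\bar t+k)=C(\lambda+J)^k w=\lambda^k q(k)$ with $q$ a polynomial satisfying $q(0)=y(\bar t)\neq 0$; since $\lvert\lambda\rvert\ge 1$ this cannot tend to zero. This one-line sampling argument replaces the delicate oscillatory-cancellation analysis, and is in spirit how the paper's quotient construction also dodges the issue.
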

\begin{proof}
	We first consider the case when $(\Phi, C)$ is observable\footnote{$(\Phi, C)$ is observable if its unobservable subspace is trivial.}.
	In this case, \reflocal{item:c} is equivalent to $\Phi(1)$ having no eigenvalue outside the open unit disk. 
	We show that this is equivalent to the remaining statements. 
	If $\Phi(1)$ has no eigenvalue outside the open unit disk, then by \cref{pro:transition_input_resilient}, the relation $\mathcal{Q}$ defined by \cref{eq:transition_io_system} with input $u$ and output $x$ is stable, VIVO, and smoothly VIVO.
	It is then clear that $\mathcal{P}$ is also stable, VIVO, and smoothly VIVO. Conversely, if $\Phi(1)$ has an eigenvalue outside the open unit disk, then there exists non-zero $x \in C^{\infty}_{\local}(\RRge{0}, X)$ such that $(0, x) \in \mathcal{Q}$ and $x(t + 1) = \lambda x(t)$ for some $|\lambda| \geq 1$.
	Set $y(t) = Cx(t) = C\Phi(t)x(0)$, then $(0, y) \in \mathcal{P}$ and by observability, $y(\bar{t}) \neq 0$ for some $\bar{t}$.
	Since $y(t+k) = \lambda^k y(t)$, $y$ is non-vanishing.
	
	We consider now the general case, where the observability of $(\Phi, C)$ is not required. Let $V$ be the unobservable subspace of $(\Phi, C)$.
	The relation $\mathcal{P}$ defined by $(\Phi, B, C, D)$ coincides with the relation defined by $(\tilde{\Phi}, \tilde{B}, \tilde{C}, D)$, where $\tilde{\Phi}(t)$, $\tilde{B}$, and $\tilde{C}$ are given by the commutative diagram 
	\begin{equation*}
		\begin{tikzcd} 
			U \arrow[r, "B"] \arrow[dr, "\tilde{B}"'] & X \arrow[r, "\Phi(t)"] \arrow[d, two heads] & X \arrow[d, two heads] \arrow[r, "C"] & Y \\
			& X/V \arrow[r, "\tilde{\Phi}(t)"] & X/V \arrow[ur, "\tilde{C}"']
		\end{tikzcd} 
	\end{equation*}
	By \cref{lem:compact_quotient} below, \reflocal{item:c} is equivalent to $\tilde{\Phi}(1)$ having no eigenvalue outside the open unit disk.
	Furthermore, $\tilde{\Phi}(1)$ is compact and $(\tilde{\Phi}, \tilde{C})$ is observable.
	Therefore, the equivalence follows from the observable case.
\end{proof}

\begin{lemma} \label{lem:compact_quotient} \stepcounter{refer}
	Let $A$ be a compact operator on a Banach space $X$, $V$ be a closed $A$-invariant subspace, and $\tilde{A}$ be the quotient operator defined by the commutative diagram 
\begin{equation}\label{eq:comm}
   \begin{tikzcd} 
			X \arrow[r, "A"] \arrow[d, two heads] & X \arrow[d, two heads] \\
			X/V \arrow[r, "\tilde{A}"] & X/V
		\end{tikzcd}
\end{equation}
	
	The following statements hold true.
	\begin{enumerate}[(a)]
		\item\labellocal{item:a} $\tilde{A}$ is compact.
		\item\labellocal{item:b} $\spec(\tilde{A}) \subseteq \spec(A) \cup \{0\}$.
		\item\labellocal{item:c} $\lambda \in \spec(\tilde{A}) \setminus\{0\}$ if and only if $A$ has a generalized eigenvector $v \in \bigcup_{k} \Ker (\lambda - A)^k$ outside of $V$.
	\end{enumerate}
\end{lemma}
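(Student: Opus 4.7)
I will establish (a) and (c) directly, and deduce (b) as a corollary of (c) together with the standard fact that every non-zero spectral value of a compact operator is an eigenvalue. For (a), a bounded sequence $\{[x_n]\}$ in $X/V$ lifts to a bounded sequence $\{x_n\}$ in $X$ by choosing representatives with norms at most twice the quotient norms; compactness of $A$ then yields a convergent subsequence of $\{A x_n\}$, and applying the continuous quotient map $\pi : X \to X/V$ produces a convergent subsequence of $\{\tilde{A}[x_n]\} = \{\pi(A x_n)\}$.

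For the backward direction of (c), I would choose the smallest $k \geq 1$ for which $\Ker(\lambda - A)^{k}$ is not contained in $V$, and pick $v \in \Ker(\lambda - A)^{k} \setminus V$. Then $(\lambda - A) v \in \Ker(\lambda - A)^{k-1} \subseteq V$, so in the quotient $\tilde{A}[v] = \lambda [v]$ with $[v] \neq 0$, establishing $\lambda \in \spec(\tilde{A})$.

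The forward direction of (c) is where the work lies. Assume $\lambda \in \spec(\tilde{A}) \setminus \{0\}$ and, for contradiction, that $N_\lambda := \bigcup_{k} \Ker(\lambda - A)^{k}$ is contained in $V$. By (a) and compactness, $\lambda$ is an eigenvalue of $\tilde{A}$, so there is $[v] \neq 0$ with $(\lambda - A) v \in V$. The idea is to compare the Riesz decompositions of the compact operators $A$ on $X$ and $A|_V$ on $V$: write $X = N_\lambda \oplus R_\lambda$, where $\lambda - A$ is nilpotent on $N_\lambda$ and invertible on $R_\lambda = \bigcap_{k} \Ima(\lambda - A)^{k}$, and analogously $V = (V \cap N_\lambda) \oplus W$, where $W = \bigcap_{k} \Ima((\lambda - A|_V)^{k})$. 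Comparing these intersections yields $W \subseteq R_\lambda$, and under the assumption $N_\lambda \subseteq V$ one obtains $V = N_\lambda \oplus W$. Decomposing $v = v_N + v_R$ in the $X$-splitting and using the $A$-invariance of both summands, I show that $(\lambda - A) v_R$ lies in $V \cap R_\lambda = W$; bijectivity of $\lambda - A$ on $W$, combined with $N_\lambda \cap R_\lambda = \{0\}$, forces $v_R \in W \subseteq V$, and therefore $v \in V$, contradicting $[v] \neq 0$. The main obstacle is precisely this step, namely verifying that the two Riesz decompositions interlock so that $V \cap R_\lambda$ coincides with the complementary summand $W$ of $V \cap N_\lambda$ in $V$.

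Finally, (b) is a corollary: a non-zero $\lambda \in \spec(\tilde{A})$ is an eigenvalue of the compact operator $\tilde{A}$, so by (c) the operator $A$ admits a generalized eigenvector for $\lambda$, which places $\lambda$ in $\spec(A)$.
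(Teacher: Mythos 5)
Your proposal is correct, but it organizes the argument differently from the paper, most visibly in the hard direction of (c) and in the role of (b). The paper proves (b) directly and first: surjectivity of $\lambda - A$ passes to surjectivity of $\lambda - \tilde{A}$ on $X/V$, and the Fredholm alternative for the compact $\tilde{A}$ then gives injectivity, so $\lambda \notin \spec(\tilde{A})$. For the hard direction of (c) the paper then sets $W = \bigcup_k \Ker(\lambda-A)^k$, assumes $W \subseteq V$, factors $\tilde{A}$ through the intermediate quotient $X/W$, invokes the Riesz splitting to conclude that the induced operator $A'$ on $X/W$ has $\spec(A') = \spec(A)\setminus\{\lambda\}$, and finishes by applying (b) to the pair $(A', V/W)$. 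You instead keep everything inside $X$: you run the Riesz decompositions of $A$ on $X$ and of $A|_V$ on $V$ in parallel, verify that they interlock ($W \subseteq R_\lambda$ and $V \cap R_\lambda = W$ when $N_\lambda \subseteq V$), and show that any $v$ with $(\lambda - A)v \in V$ must already lie in $V$, contradicting the existence of an eigenvector of $\tilde{A}$; you then obtain (b) as a corollary of (c). Your interlocking step is sound — the $N_\lambda$-component of an element of $V \cap R_\lambda$ lies in $N_\lambda \cap R_\lambda = \{0\}$, and injectivity of $\lambda - A$ on $R_\lambda$ together with surjectivity on $W$ forces $v_R \in W$ — and there is no circularity, since your proof of (c) uses only (a) and Riesz theory. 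What the paper's route buys is brevity: the quotient-by-$W$ trick delegates all the linear algebra to the already-proved statement (b). What your route buys is a more self-contained and concrete argument that never leaves $X$, and as a bonus your easy direction of (c) produces a genuine eigenvector of $\tilde{A}$ (by taking the minimal $k$ with $\Ker(\lambda-A)^k \not\subseteq V$) rather than only a generalized one.
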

\begin{proof}
	Proof of \reflocal{item:a} follows from the commutative diagram \cref{eq:comm}.

	To prove \reflocal{item:b}, assume $\lambda \not\in \spec(A) \cup \{0\}$, then $\Ima(\lambda - A) = X$.
	This implies that $\Ima(\lambda - \tilde{A}) = X/V$, and $\Ker(\lambda - \tilde{A}) = \{0\}$ by properties of compact operators \cite[VII.7.10]{conway90}.
	Therefore, $\lambda \not\in \spec(\tilde{A})$.
	
Now we will prove \reflocal{item:c}. If $(\lambda - A)^k v = 0$ for some $v \not\in V$, then $(\lambda - \tilde{A})^k v = 0$ in $X/V$, which means $\lambda \in \spec(\tilde{A})$. Conversely, if $W = \bigcup_{k} \Ker(\lambda - A)^k \subseteq V$, then $\tilde{A}$ is a quotient of the quotient operator $A'$ defined in the commutative diagram 
\begin{equation*}
\begin{tikzcd} 
			X \arrow[d, "A"] \arrow[r, two heads] & X/W \arrow[d, "A'"] \arrow[r, two heads] & X/V \arrow[d, "\tilde{A}"] \\
			X \arrow[r, two heads] & X/W \arrow[r, two heads] & X/V
\end{tikzcd}
\end{equation*}
	By the spectral theorem of compact operators \cite[VII.4.5]{dunford58}, $W$ is the image of the spectral projection of $A$ to $\{\lambda\}$.
	Thus, $W$ has a closed $A$-invariant complement (i.e., the image of the spectral projection to $\spec(A) \setminus \{\lambda\}$).
	Therefore, $\spec(A')$ coincides with the spectrum of the restriction of $A$ to this complement. 
	We conclude that $\spec(A') = \spec(A) \setminus \{\lambda\}$, and by \reflocal{item:b}, that $\lambda \not\in \spec(\tilde{A})$.
\end{proof}

\change{Note that \cref{pro:transition_io_input_resilient} automatically implies \propnoun{} of finite-dimensional systems because finite-dimensional spaces are compact.}
Recall that the continuous-time relation $\mathcal{P}$ defined by \cref{eq:transition_io_system} are equivalent to the relation defined by the differential equation \cref{eq:io cauchy}.
\change{Next, we provide a sufficient condition for \propnoun{} of $\mathcal{P}$ in terms of the generator $A$ of the semigroup.}
We now recall the definition of \define{sectorial operators}. 

\begin{definition} \stepcounter{refer}
	An operator $A$ on a Banach space $X$ is sectorial if it is densely-defined, its resolvent set contains $\omega + \sectl{\theta}$ for some $\theta > \pi/2$ and $\omega \in \RR$, and
	\begin{align*}
		\lVert (\lambda - A)^{-1}\rVert &\lesssim \frac{1}{\lvert \lambda - \omega \rvert} \qquad \text{uniformly in } \lambda \in \omega + \sectl{\theta}.
	\end{align*}
	Note that $\lambda$ in $(\lambda - A)$ is the image of $\lambda \in \CC$ via the canonical embedding $\CC \rightarrow \Endom(X) : \lambda \mapsto \lambda \cdot \identity_{X}$.
\end{definition}

Every sectorial operator $A$ is guaranteed to generate a semigroup $e^{At}$ (\cite[Thm. 2.4 in Chpt. 2]{pazy83} or \cite[Prop. 4.3 in Chpt. 2]{engel00}), which can be expressed explicitly by the functional calculus formula
\begin{align} \label{eq:sectorial_calculus}
	e^{At} &= \frac{1}{2\pi i} \int_{\Gamma} e^{tz} (z - A)^{-1} dz,
\end{align}
where $\Gamma$ is the negatively-oriented boundary of a sector $\omega + \sectl{\theta}$ whose closure is contained in the resolvent set of $A$ with the restriction $\theta \in (\pi/2, \pi]$. The following technical lemma is required in the proof of \cref{pro:acp_resilient}.

\begin{lemma} \label{lem:semigroup_pointspec} \stepcounter{refer}
	Let $A$ be the generator of a semigroup on a Banach space $X$.
	Fix any $a \in X$, $\lambda \in \CC \setminus \{0\}$, and a closed subspace $V \subseteq X$.
	If the equation $(\lambda - e^{A}) x = a$ has a solution $x \not\in V$, then there exists $y \not\in V$ such that $(\mu - A) y = a$ for some $\mu \in \log \lambda$.
\end{lemma}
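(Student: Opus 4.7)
The plan is to build, for each $\mu \in \log \lambda$, an explicit candidate solution $y_{\mu}$ via an integral convolution of the semigroup against a scalar exponential, and then to use a Fourier-series argument on $[0,1]$ to show that at least one such $\mu$ yields $y_{\mu} \notin V$.

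The cornerstone is the operator identity
\begin{align*}
  \lambda - e^{A} \;=\; (\mu - A)\, R_{\mu}, \qquad R_{\mu} := \int_{0}^{1} e^{\mu(1-s)} e^{As}\, ds,
\end{align*}
valid for every $\mu \in \log \lambda$. Since $A - \mu I$ generates the rescaled strongly continuous semigroup $t \mapsto e^{-\mu t} e^{At}$, the classical identity $B \int_{0}^{1} e^{Bs} x\, ds = e^{B} x - x$ (valid for any semigroup generator $B$ and any $x \in X$) applied to $B = A - \mu I$ yields $R_{\mu} x \in \domain(A)$ together with $(\mu - A) R_{\mu} x = (\lambda - e^{A}) x$. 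Setting $y_{\mu} := R_{\mu} x$ therefore gives $(\mu - A) y_{\mu} = a$ for every $\mu \in \log \lambda$, so it remains to produce one $\mu$ with $y_{\mu} \notin V$.

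Suppose toward contradiction that $y_{\mu} \in V$ for every $\mu \in \log \lambda$. Fix $\mu_{0} \in \log \lambda$, so that $\log \lambda = \{\mu_{0} + 2\pi i k : k \in \ZZ\}$, and define the continuous function $g : [0,1] \to X$ by $g(s) := e^{-\mu_{0} s} e^{As} x$. A direct computation rewrites
\begin{align*}
  y_{\mu_{0} + 2\pi i k} \;=\; \lambda \int_{0}^{1} e^{-2\pi i k s}\, g(s)\, ds \;=\; \lambda\, \hat{g}(k),
\end{align*}
so every Fourier coefficient $\hat{g}(k)$ of $g$ lies in the closed subspace $V$. Composing with the quotient map $\pi : X \to X/V$, the continuous function $\pi \circ g$ has all Fourier coefficients zero; pairing with any $\phi \in (X/V)^{*}$ and invoking Parseval on $L^{2}([0,1])$ forces $\phi \circ \pi \circ g \equiv 0$, and Hahn--Banach then gives $\pi \circ g \equiv 0$. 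Hence $g(s) \in V$ for every $s \in [0,1]$, and evaluating at $s = 0$ yields $x = g(0) \in V$, contradicting the hypothesis.

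The main obstacle is establishing the operator identity cleanly, and in particular confirming that $R_{\mu} x \in \domain(A)$ for arbitrary $x \in X$ (not merely $x \in \domain(A)$); once this identity is in hand, the Fourier-analytic step over the arithmetic progression $\log \lambda$ is essentially mechanical.
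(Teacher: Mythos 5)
Your proof is correct and follows essentially the same route as the paper's: both establish the semigroup integral identity giving candidate solutions $y_\mu = (\mu-A)^{-1}a$ for each $\mu \in \log\lambda$, and both recognize these as Fourier coefficients of the continuous $X$-valued orbit $s \mapsto e^{-\mu_0 s}e^{As}x$ on $[0,1]$, concluding from $x\notin V$ that some coefficient escapes $V$. Your treatment of the Fourier step (quotienting by $V$, pairing against a functional, invoking scalar Parseval plus Hahn--Banach) is in fact a bit more careful than the paper's direct appeal to $L^2([0,1],X)$-convergence of the vector-valued Fourier series, which does not hold in an arbitrary Banach space; the scalar reduction you give is the standard, airtight way to reach the same conclusion.
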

\begin{proof}
	For any $\mu \in \log \lambda$, the operator $A - \mu$ generates the semigroup $e^{-\mu t} e^{At}$, hence
	\begin{align*}
		(A - \mu) \int_{0}^{1} e^{-\mu t} e^{At} x dt &= e^{-\mu} (e^{A} - e^{\mu}) x = -\lambda^{-1} a. 
	\end{align*}
	It suffices to show that \change{some $y = \lambda \int_{0}^{1} e^{-\mu t} e^{At} x dt$} does not belong to $V$.
	Fix a $\mu_0 \in \log \lambda$, then $\log \lambda = \{\mu_0 + k 2\pi i\}_{k \in \ZZ}$.
	Set $\mu_k = \mu_0 + k2\pi i$ and $y_k = \lambda \int_{0}^{1} e^{-\mu_k t} e^{At} x dt$.
	Notice that $y_k$ are the Fourier coefficients of the function
	\begin{align*}
		f : [0,1] \rightarrow X : t \mapsto \lambda e^{-\mu_0 t} e^{At} x,
	\end{align*}
	hence $\sum_{k \in \ZZ} y_k e^{k2\pi i}$ converges to $f$ in $L^2([0,1], X)$.
	Since $f(0) \not\in V$ and $f$ is continuous, at least one of the $y_k$ does not belong to $V$.
\end{proof}
 
The lemma is used to prove
 
\begin{theorem} \label{pro:acp_resilient} \stepcounter{refer}
	Let $\mathcal{P}$ be the $(U, Y)$-relation defined by \cref{eq:io cauchy}. 
	Assume $A$ is a sectorial operator with compact resolvent \change{\cite[Def. A.4.24]{curtain95}}, and $B, C, D$ are continuous operators, then $e^A$ is compact and the following are equivalent:
	\begin{enumerate}[(a)] \itemsep 0mm
		\item\labellocal{item:a} $\mathcal{P}$ is stable.
		\item\labellocal{item:b} Every $v_0 \in X$ satisfying
		\begin{align} \labellocal{eq:1}
			(\mu_m - A) \dots (\mu_1 - A) v_0 &= 0,
		\end{align}
		for some $|\lambda| \geq 1$, $m \in \NN$, and $\mu_1, \ldots, \mu_m \in \log \lambda$, belongs to the unobservable subspace of $(e^{At}, C)$.
		\item\labellocal{item:c} $\mathcal{P}$ is VIVO.
		\item\labellocal{item:d} $\mathcal{P}$ is smoothly VIVO.
	\end{enumerate}
\end{theorem}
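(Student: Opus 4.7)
The plan is to first establish that $e^A$ is compact and then apply \cref{pro:transition_io_input_resilient} to reduce the statement to proving (b) equivalent to the condition $(*)$: every unstable generalized eigenvector of $e^A$ lies in the unobservable subspace $V$ of $(e^{At}, C)$. Compactness of $e^A$ follows from the functional calculus formula \cref{eq:sectorial_calculus}, since sectoriality makes $\lVert e^z (z - A)^{-1}\rVert$ exponentially decaying along $\Gamma$, each $(z - A)^{-1}$ is compact by hypothesis, and the compact operators form a norm-closed subspace of $\Endom(X)$. With $e^A$ compact, \cref{pro:transition_io_input_resilient} yields the equivalences (a) $\Leftrightarrow$ $(*)$ $\Leftrightarrow$ (c) $\Leftrightarrow$ (d), so the remaining work is to show (b) $\Leftrightarrow$ $(*)$.

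For (b) $\Rightarrow$ $(*)$, I will assume (b) and, for a contradiction, fix $v_0 \notin V$ with $(\lambda - e^A)^k v_0 = 0$, $\lvert\lambda\rvert \geq 1$. I plan to iterate \cref{lem:semigroup_pointspec} to produce $y_0 := v_0, y_1, \ldots, y_k$, all outside $V$, together with $\mu_1, \ldots, \mu_k \in \log \lambda$ satisfying $(\mu_i - A) y_i = (\lambda - e^A) y_{i-1}$; at each step the lemma applies because $y_{i-1} \notin V$ solves $(\lambda - e^A) x = (\lambda - e^A) y_{i-1}$. A straightforward induction, using that each $(\mu_j - A)$ commutes with $(\lambda - e^A)$ on sufficiently smooth elements, will give $(\mu_1 - A) \cdots (\mu_i - A) y_i = (\lambda - e^A)^i v_0$, so at $i = k$ the product vanishes. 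This contradicts (b) via the witness $y_k \notin V$ and the tuple $(\mu_1, \ldots, \mu_k) \in (\log \lambda)^k$, forcing $v_0 \in V$.

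For $(*)$ $\Rightarrow$ (b), I take $v_0$ satisfying $(\mu_m - A) \cdots (\mu_1 - A) v_0 = 0$ with $\mu_i \in \log \lambda$, $\lvert\lambda\rvert \geq 1$. Grouping repeated factors rewrites the condition as $p(A) v_0 = 0$ with $p(z) = \prod_\nu (\nu - z)^{d_\nu}$ over the distinct $\nu \in \{\mu_1, \ldots, \mu_m\}$. Polynomial partial fractions decompose $v_0 = \sum_\nu v_0^{(\nu)}$ with $v_0^{(\nu)} \in \Ker(\nu - A)^{d_\nu}$. On the finite-dimensional subspace $\Ker(\nu - A)^{d_\nu}$, $A$ restricts to $\nu I + N_\nu$ with $N_\nu^{d_\nu} = 0$; hence $e^A - \lambda I = e^\nu(e^{N_\nu} - I)$ is a polynomial in $N_\nu$ with zero constant term, and is therefore nilpotent. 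This gives $(\lambda - e^A)^{d_\nu} v_0^{(\nu)} = 0$, so $v_0$ is an unstable generalized eigenvector of $e^A$, and $(*)$ delivers $v_0 \in V$.

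The principal obstacle will be the domain and commutation bookkeeping in the (b) $\Rightarrow$ $(*)$ iteration: each $y_i$ must lie in $\bigcap_n \domain(A^n)$ for the nested compositions $(\mu_1 - A) \cdots (\mu_i - A) y_i$ to be well-defined and for the commutations used in the induction to hold. This is resolved by observing that $(\lambda - e^A)^k v_0 = 0$ expresses $\lambda^k v_0$ as a $\CC$-linear combination of $\{e^{jA} v_0\}_{j \geq 1}$, each of which lies in $\bigcap_n \domain(A^n)$ by analyticity of the sectorial semigroup; thus $v_0$ itself does, and the same smoothness propagates to each $y_i$ through $A y_i = \mu_i y_i - (\lambda - e^A) y_{i-1}$.
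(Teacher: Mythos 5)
Your proof is correct, and it departs from the paper's argument in two places that are worth flagging.

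\textbf{Compactness of $e^A$.} You argue directly that the Bochner integral $\int_\Gamma e^z(z-A)^{-1}\,dz$ converges in norm, has compact-operator-valued integrand, and the compact operators form a norm-closed subspace, hence the integral is compact. The paper instead uses the resolvent identity $R_z = R_a + (a-z)R_a R_z$ to factor the compact $R_a$ out of the integral and multiply it by a bounded operator. Both are valid. One minor imprecision in your phrasing: along $\Gamma$ the integrand is only bounded near the vertex (where $z \to \omega$), not exponentially decaying; exponential decay kicks in only for $|z|$ large. This does not affect integrability and is a cosmetic point.

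\textbf{Equivalence with condition (b).} Both you and the paper reduce to \cref{pro:transition_io_input_resilient} for the chain (a) $\Leftrightarrow$ (c) $\Leftrightarrow$ (d), and both iterate \cref{lem:semigroup_pointspec} for the forward direction (b) $\Rightarrow$ (*) (the paper phrases it as ``repeated applications'' without detail; your write-up makes the induction and the commutation/domain bookkeeping explicit). The genuine divergence is in the converse. The paper proves (a) $\Rightarrow$ (b) directly: given $v_0 \notin V$ satisfying \creflocal{eq:1}, it reduces WLOG to the case $v_1, \ldots, v_m \in V$, shows $x(t) = e^{\mu_1 t}v_0 + r(t)$ with $r(t) \in W \subseteq V$, and argues that $y(t) = e^{\mu_1 t} Cv_0$ is a non-vanishing output. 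You instead prove (*) $\Rightarrow$ (b) purely algebraically: group repeated factors, invoke the root-space decomposition $\Ker p(A) = \bigoplus_\nu \Ker(\nu - A)^{d_\nu}$ (available because $A$ has compact resolvents, so each summand is finite dimensional), and observe that on each finite-dimensional summand $e^A - \lambda = e^\nu(e^{N_\nu} - I)$ is nilpotent, so each piece is an unstable generalized eigenvector of $e^A$ and lands in $V$ by (*). Your route avoids a step the paper glosses over, namely justifying that $Cv_0 \neq 0$ (which in the paper's argument follows only indirectly from the definition of the unobservable subspace applied to the orbit $\Phi(t)v_0$). On the other hand, the paper's argument is more self-contained in that it does not invoke the Jordan/root-space machinery for unbounded operators; if you keep your route, you should spell out the Bezout decomposition $\sum_\nu q_\nu p_\nu = 1$ and that it is applied to $v_0 \in \domain(A^m)$ where the polynomial factors of $A$ commute.
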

\begin{proof}
First we prove compactness of $e^A$.
	Let $R_{z} := (z - A)^{-1}$ for every $z$ in the resolvent set of $A$.
	By \cref{eq:sectorial_calculus} and the resolvent identity $R_{z} = R_{a} + (a - z) R_{a} R_{z}$,
	\begin{align*}
		2\pi i e^{A} &= \int_{\Gamma} e^{z} R_{z} dz
		= R_{a} \int_{\Gamma} e^{z} \left(1 + a R_{z} - z R_{z}\right) dz.
	\end{align*}
	The term $R_{a}$ can be factored out of the integral because the remaining term is integrable in the norm topology of $\Endom(X)$.
	Since $e^{A}$ is the composition of the compact operator $R_{a}$ with a bounded operator, it is compact.

	We now notice that $\mathcal{P}$ coincides with the relation defined by \cref{eq:transition_io_system} with $\Phi(t) = e^{At}$, so \reflocal{item:a}, \reflocal{item:c}, \reflocal{item:d} are equivalent.
	
	It remains to show that \reflocal{item:a} is equivalent to \reflocal{item:b}. Let $V$ denote the unobservable subspace of $(e^{At}, C)$. Assume $\mathcal{P}$ is not stable, then there is $u_0 \in X \setminus V$, $\lvert \lambda \rvert \geq 1$, and $m \in \ZZge{0}$ such that $(\lambda - e^{A})^m u_0 = 0$.
	 By repeated applications of \cref{lem:semigroup_pointspec}, there exists $v_0 \in X \setminus V$ satisfying \creflocal{eq:1}. Conversely, assume \creflocal{eq:1} holds for some $v_0 \in X \setminus V$. Without loss of generality we can assume   that $v_j := (\mu_j - A) v_{j-1}$ belongs to $V$ for every $j \in \{1, \ldots, m\}$.
	Since the subspace $\Span_{j \geq 0} \{v_j\}$ is finite-dimensional and $A$-invariant, the solution of $\dot{x} = Ax$ initialized at $v_0$ is smooth. 
	Let $W$ be the $A$-invariant subspace $\Span_{j \geq 1} \{v_j\}$.
	Since $A v_0 = \mu_1 v_0$ in $X / W$, the solution of $\dot{x} = Ax$ initialized at $v_0$ must be equal to $x(t) = e^{\mu_{1} t} v_0 + r(t)$ for some $r \in C^{\infty}_{\local}(\RRge{0}, W)$.
	Since $v_0 \not\in V$ and $W \subseteq V$, we conclude that $y(t) = Cx(t) = e^{\mu_1 t} C v_0$ is a non-vanishing output of $\mathcal{P}$.
\end{proof}

The hypothesis of \cref{pro:acp_resilient} on $A$ can be certified using the technique \change{described in \cite[Sect. 3.6.2]{sell02}}.
\begin{shownto}{SIAM}
	\change{We present a concise statement tailored for our later applications; a proof is provided in \cite[Lemma 4.14]{xia23}.
	This result will be used in the next section to prove \propnoun{} of parabolic systems.}
\end{shownto}
\begin{shownto}{arXiv}
	We present a concise statement tailored for our later applications, with an adapted proof.
	This result will be used in the next section to prove \propnoun{} of parabolic systems.
\end{shownto}

\begin{lemma} \label{lem:hermitian_sectorial} \stepcounter{refer}
	Given Hilbert spaces $X$ and $Y$ with conjugations\footnote{A conjugation on a Hilbert space $X$ is a conjugate-linear isometry $X \rightarrow X : x \mapsto \bar{x}$ such that $\bar{\bar{x}} = x$.}, and a bounded injective operator $\iota : Y \hookrightarrow X$ commuting with conjugations and having dense image.
	Denote by $\Phi$ the ($\CC$-linear) Riesz isomorphism $X \rightarrow X^* : x \mapsto \langle \bar{x}, \cdot \rangle_{X}$. 
	Let $Q : Y \rightarrow Y^*$ be a bounded operator.
	If there exists $\alpha \in \RR$ such that
	\begin{align} \labellocal{eq:conclusion}
		\lVert y\rVert_{Y}^2 &\lesssim - \re \langle Q y, \bar{y} \rangle + \alpha \lVert \iota y\rVert_{X}^2 \in \RRge{0}
	\end{align} 
	uniformly in $y \in Y$, 
	then the maximal operator $A$ completing the diagram below\footnote{We define $A x_1 = x_2$ whenever there exist elements on the remaining three spots of \creflocal{eq:commute} completing the diagram.} is a sectorial operator.
	\begin{equation} \labellocal{eq:commute}
	\begin{tikzcd}
		X \arrow[rrrd, dotted, bend right=10, "A"] & Y \arrow[l, hook', "\iota"'] \arrow[r, "Q"] & Y^* & X^* \arrow[l, hook', "\iota^*"'] \\
		& & & X \arrow[u, hook, two heads, "\Phi"']
	\end{tikzcd}
	\end{equation}
	Furthermore, if $\iota$ is compact, then $A$ has compact resolvent.
\end{lemma}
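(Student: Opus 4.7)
The plan is to apply the classical form-based construction of sectorial operators via the Lax--Milgram theorem. First, I would reduce to $\alpha = 0$: replacing $Q$ by $Q - \alpha \iota^* \Phi \iota$ shifts the maximal operator associated with the diagram from $A$ to $A - \alpha \cdot \identity_X$, and sectoriality is invariant under such real shifts (replacing $\omega$ by $\omega - \alpha$). So I assume $\|y\|_Y^2 \lesssim -\re \langle Qy, \bar y\rangle$ for all $y \in Y$. Introduce the bounded form $a(y,z) := -\langle Qy, \bar z\rangle$ on $Y \times Y$. By boundedness of $Q$ and the hypothesis, $|a(y,y)| \lesssim \|y\|_Y^2$ and $\re a(y,y) \gtrsim \|y\|_Y^2$, which forces $|\im a(y,y)| \leq M \re a(y,y)$ for some $M \geq 0$, confining the numerical range $\{a(y,y) : y \in Y\}$ to the closed sector $S_\beta := \{z \in \CC : |\arg z| \leq \beta\}$ with $\beta := \arctan M < \pi/2$. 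A direct unwinding of the diagram, using the explicit form of $\Phi$ and the commutation of $\iota$ with conjugation, shows $\langle Qy, \bar y\rangle = \overline{\langle A x, x\rangle_X}$ whenever $x = \iota y \in \domain(A)$, identifying $a$ as the sesquilinear form on $Y$ associated with $-A$ on the pivot space $X$.

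For any $\lambda \in \CC$ with $-\lambda \notin S_\beta$, consider the perturbed form $a_\lambda$ corresponding via the diagram to $\lambda - A$. The angular condition on $\lambda$ furnishes a rotation $\phi = \phi(\lambda)$ such that both $e^{-i\phi} a(y,y)$ and the rotated multiple of $\lambda \|\iota y\|_X^2$ have positive real parts with a uniform angular margin, so that
\[
  \re\bigl(e^{-i\phi} a_\lambda(y, y)\bigr) \gtrsim \|y\|_Y^2 + |\lambda|\|\iota y\|_X^2
\]
on any closed subsector of $\{\lambda : -\lambda \notin S_\beta\}$. Lax--Milgram applied to $e^{-i\phi} a_\lambda$ then produces, for each $f \in X$, a unique $y \in Y$ with $\iota y \in \domain(A)$ and $(\lambda - A)\iota y = f$, establishing bijectivity of $\lambda - A : \domain(A) \to X$. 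Testing against $x = \iota y$ gives $|\lambda|\|x\|_X^2 \lesssim |\langle (\lambda - A) x, x\rangle_X| \leq \|f\|_X \|x\|_X$, whence $\|(\lambda - A)^{-1}\|_{X \to X} \lesssim 1/|\lambda|$. Restoring the shift shows $A$ is sectorial with $\omega = \alpha$ and half-angle $\theta = \pi - \beta > \pi/2$. Density of $\domain(A)$ in $X$ follows because the adjoint form $a^*(y, z) := \overline{a(z, y)}$ is also bounded and coercive, so the same argument makes the adjoint operator surjective; by Hahn--Banach this forces $\domain(A)$ dense in $X$.

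Finally, for compactness: if $\iota$ is compact, then $(\lambda - A)^{-1} : X \to X$ factors as the bounded Lax--Milgram solution map $X \to Y$ followed by $\iota : Y \to X$, so the composition is compact.

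The main obstacle is the angular bookkeeping: for each $\lambda$ in the sector $\{\lambda : -\lambda \notin S_\beta\}$ one must choose $\phi(\lambda)$ so that both summands of $e^{-i\phi} a_\lambda(y,y)$ contribute positively to the real part with the correct $|\lambda|$-dependent constants. This is elementary trigonometry once one notes that the numerical range of $a$ lies in $S_\beta$ independently of $\lambda$, but the constants degenerate as $|\arg(-\lambda)| \downarrow \beta$, so the sectoriality estimate only holds uniformly on closed subsectors.
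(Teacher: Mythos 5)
Your proof is correct and uses the same core tool as the paper (the Lax--Milgram/coercive-form method under the normalization $\alpha=0$, with compact resolvents following from the factorization through $\iota$), but the step that pushes the resolvent set beyond the right half-plane is organized differently. You locate the numerical range of the form $a$ in a sector $S_\beta$ with $\beta=\arctan M<\pi/2$, and then for each $\lambda$ with $-\lambda\notin S_\beta$ you pick a $\lambda$-dependent rotation $\phi(\lambda)$ to make the perturbed form $e^{-i\phi}a_\lambda$ coercive and invoke Lax--Milgram afresh; you also prove density of $\domain(A)$ via coercivity of the adjoint form. The paper instead first disposes of the closed right half-plane (where coercivity of $Q-\lambda\,\iota^*\Phi\iota$ is immediate), then applies a \emph{single} small rotation $Q\mapsto e^{\pm i\theta}Q$—permissible because the image of the unit sphere under $y\mapsto\langle Qy,\bar y\rangle$ is a bounded set contained in $\CCle{-\epsilon}$—and takes the union of the rotated half-planes to get a sector of opening $(\pi+\theta)/2$; it obtains density directly from the dense image of the factored inverse $A^{-1}=\iota Q^{-1}\iota^*\Phi$. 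Your version gives a sharper angle ($\pi-\beta$ on closed subsectors) at the cost of the angular bookkeeping you flagged, while the paper's is blunter (some $\theta>\pi/2$) but sidesteps that bookkeeping entirely by reducing to the half-plane estimate it has already established. One small point worth tightening: your identity $\langle Qy,\bar y\rangle=\overline{\langle Ax,x\rangle_X}$ and the paper's $\langle Ax,x\rangle_X=\langle Qy,\bar y\rangle$ differ by a conjugation, but only the real part enters the argument, so this is harmless; just be aware your form $a$ is conjugate to the one associated with $-A$ rather than equal to it, and the numerical range sector should be checked to be symmetric under complex conjugation (it is, because $\pm\beta$ both occur) so the discrepancy does not affect the sector you end up with.
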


\begin{shownto}{arXiv}
\begin{proof}
	Notice that \creflocal{eq:commute} commutes when $A = \lambda \in \CC$ and $Q = \lambda(\iota^* \circ \Phi \circ \iota)$, and for this $Q$, $\langle Qy, \bar{y} \rangle = \lambda \langle \Phi \iota y, \overline{\iota y} \rangle = \lambda \|\iota y\|_{X}$.
	Therefore, \creflocal{eq:commute} remains commutative if $A$ is replaced by $A' = A + \alpha$ and $Q$ is replaced by $Q' = Q + \alpha(\iota^* \circ \Phi \circ \iota)$.
	Furthermore, $\lVert y\rVert_{Y}^2 \lesssim - \re \langle Q' y, \bar{y} \rangle$.
	Since $A$ is sectorial (resp. has compact resolvent) iff $A'$ does, we may assume without loss of generality that $\alpha = 0$.
	In this case, we conclude the proof by showing
	\begin{enumerate}[(a)] \itemsep 0mm
		\item\labellocal{item:a} $Q$ is invertible.
		\item\labellocal{item:b} $A$ is a densely-defined closed operator, whose resolvent set contains $0$.
		\item\labellocal{item:c} The resolvent set of $A$ contains $\CCg{0}$ and $\lVert (\lambda - A)^{-1}\rVert \lesssim (\re \lambda)^{-1}$ uniformly in $\lambda \in \CCg{0}$.
		In particular, for any $\phi < \pi/2$, $\lVert (\lambda - A)^{-1}\rVert \lesssim \lvert \lambda \rvert^{-1}$ for all $\lambda \in \sectl{\phi}$.
		\item\labellocal{item:d} $A$ is sectorial.
		\item\labellocal{item:e} If $\iota$ is compact, then $A$ has compact resolvent.
	\end{enumerate}
	
	Proof of \reflocal{item:a}. The hypothesis implies $\lVert y\rVert_{Y} \lesssim \lVert Qy\rVert_{Y^*}$, hence $Q$ is injective and $\Ima Q \subseteq Y^*$ is closed.
	Since $\langle Qy, \bar{y} \rangle \neq 0$ when $y \neq 0$, we deduce that $(\Ima Q)^{\perp} = 0$ and $\Ima Q = Y^*$.
	Therefore, $Q$ is invertible.
	
	Proof of \reflocal{item:b}.
	By \reflocal{item:a}, $A$ is the inverse of the bounded injective operator $\iota Q^{-1} \iota^* \Phi$.
	Therefore, $A$ is a closed operator whose resolvent set contains $0$.
	Furthermore, $\iota Q^{-1} \iota^* \Phi$ has dense image, as a composition of operators with dense images.
	We conclude that $A$ is densely-defined.
	
	Proof of \reflocal{item:c}.
	For every $\lambda \in \CCge{0}$, the operator $Q' = Q - \lambda(\iota^* \circ \Phi \circ \iota)$ satisfies $\|y\|_{Y}^2 \lesssim - \re \langle Q'y, y \rangle$.
	Applying \reflocal{item:b} to $Q'$, we conclude that $A - \lambda$ is invertible for all $\lambda \in \CCge{0}$. By definition, every $x \in \domain(A)$ is equal to $\iota y$ for some $y \in X$.
	Furthermore, $\langle Ax, x \rangle_{X} = \langle \Phi Ax, \bar{x} \rangle_{X^*, X} = \langle \iota^* \Phi A\iota y, \bar{y} \rangle = \langle Qy, \bar{y} \rangle$.
	We deduce that
	\begin{align*}
		\langle (\lambda - A) x, x \rangle_{X} &= \lambda^* \langle x, x \rangle_{X} - \langle Qy, y \rangle \\
		\re \langle (\lambda - A) x, x \rangle_{X} &\geq \re \lambda^* \lVert x\rVert_{X}^2.
	\end{align*}
	This implies that $\lVert (\lambda - A)x\rVert_{X} \geq \re \lambda \lVert x\rVert_X$ for all $x \in \domain(A)$, which yields the desired inequality.
	
Proof of \reflocal{item:d}. Because $Q$ is bounded and $\lVert y\rVert_{Y}^2 \lesssim -\re \langle Qy, \bar{y} \rangle$, the image of the unit sphere in $Y$ under $y \mapsto \langle Qy, \bar{y} \rangle$ is bounded and contained in $\CCle{-\epsilon}$.
	Therefore, for small $\theta > 0$, both $Q' = e^{\pm i\theta}Q$ verify $\lVert y\rVert_{Y}^2 \lesssim -\re \langle Q'y, \bar{y} \rangle$.
	Applying \reflocal{item:c} to $Q' = e^{\pm i \theta} Q$ shows that that the resolvent sets of $A' = e^{\pm i \theta} A$ contain $\CCg{0}$ and $\lVert (\lambda - A')^{-1}\rVert \lesssim \lvert \lambda \rvert^{-1}$ uniformly in $\lambda \in \sectl{(\pi - \theta)/2}$.
	Therefore, $(\lambda-A)^{-1}$ exists and $\lVert (\lambda - A)^{-1}\rVert \lesssim \lvert \lambda \rvert^{-1}$ uniformly in $\lambda \in \sectl{(\pi + \theta)/2}$.
	
	Proof of \reflocal{item:e} follows from $A^{-1} = \iota Q^{-1} \iota^* \Phi$.
\end{proof}
\end{shownto}

\section{Second-order parabolic equations} \label{sect:parabolic}

In this section, classes of second-order parabolic equations are shown to define \prop{} relations. 
The notation $H^s(\Omega)$ denotes the $L^2$-Sobolev space of order $s$ on the domain $\Omega$.
We refer to 
\begin{shownto}{SIAM}
	\cite[Section 8.1]{xia23} f
\end{shownto}
\begin{shownto}{arXiv}
	\Cref{sect:sobolevspaces} f
\end{shownto}
or the properties of $H^s(\Omega)$ required in this section.
\begin{shownto}{SIAM}
	\change{Due to space constraints, only the Neumann boundary condition will be studied.
	Similar results for the Dirichlet boundary condition are available in \cite[Seciton 5.1]{xia23}.}
\end{shownto}

\subsection{Parabolic equations with Dirichlet boundary condition} \label{sect:parabolic_dirichlet}

In this section, we consider the parabolic equation with Dirichlet boundary condition
\begin{align} \label{eq:parabolic_dirichlet}
	\left\{
	\begin{aligned}
		\partial_t x &= \divergence(a \gradient x) + R x + Bu &&\text{ on } \RRge{0} \times \Omega, \\
		x &= 0 &&\text{ on } \RRge{0} \times \partial \Omega,
	\end{aligned}
	\right.
\end{align}
where
\begin{assumption} \label{asp:parabolic_dirichlet}
	$\Omega$, $a$, $R$, and $B$ satisfy
	\begin{itemize} \itemsep 0mm
		\item $\Omega \subseteq \RR^d$ is a compact $d$-dimensional $C^{\infty}$-submanifold with boundary.
		\item $a \in C^{\infty}(\Omega, \CC^{d \times d})$ is uniformly positive-definite.
		\item $R$ is an order\footnote{An operator $L : C^{\infty}(\Omega) \rightarrow C^{\infty}(\Omega')$ has order $a$ if $\| Lf \|_{H^k(\Omega')} \lesssim \|f\|_{H^{k+a}(\Omega)}$ uniformly in $f \in C^{\infty}(\Omega)$ for all $k \in \ZZge{0}$.} $1$ operator $C^{\infty}(\Omega) \rightarrow C^{\infty}(\Omega)$.
		\item $B$ is an order $0$ operator $C^{\infty}(\Omega) \rightarrow C^{\infty}(\Omega)$.
	\end{itemize}
\end{assumption}

We will show that the $C^{\infty}(\Omega)$-relation $\mathcal{P}$ defined by the smooth solutions $(u, x)$ of \cref{eq:parabolic_dirichlet} is \prop{}.
Since the closed-loop relation $\sysclose(\mathcal{P}, \lambda)$ is also defined by \cref{eq:parabolic_dirichlet}, except with $R$ replaced by $R + \lambda B$, the closed-loop systems $\sysclose(\mathcal{P}, \lambda)$ remains \prop{}.

To prove \propnoun{}, $\mathcal{P}$ is associated to an abstract Cauchy problem.
Define the operator $A$ on $C^{\infty}(\Omega, \CC)$ as 
\begin{align} \label{eq:elliptic_dirichlet}
	\begin{aligned}
		A &: \domain(A) \rightarrow C^{\infty} : f \mapsto \divergence(a \gradient f) + R f \\
		&\domain(A) := \{f \in C^{\infty}(\Omega, \CC) \mid f = 0 \text{ on } \partial \Omega \},
	\end{aligned}
\end{align}
then $\mathcal{P}$ coincides with the $C^{\infty}(\Omega)$-relation defined by $\dot{x} = Ax + Bu$.
However, $C^{\infty}(\Omega)$ is not a Banach space.
This issue is resolved by embedding $C^{\infty}(\Omega)$ in the Banach space $L^2(\Omega)$ and extending the equation $\dot{x} = Ax + Bu$ into an abstract Cauchy problem on $L^2$.

Since $B$ is an operator of order $0$, it extends uniquely to a bounded operator on $L^2$, denoted by $B_e$.
To extend $A$, think of $Af \in C^{\infty}$ as a distribution (i.e., a functional $C^{\infty}_{c}(\Omega^{\circ}) \rightarrow \CC$), then
\begin{align*}
	\langle Af, g \rangle_{C^{\infty}_{c}(\Omega^{\circ})^*, C^{\infty}_{c}(\Omega^{\circ})}
	= \int \divergence(a\gradient f) g + (Rf)g dV 
	= - \int_{\Omega} (a \gradient f) \cdot (\gradient g) - (Rf)g dV.
\end{align*}
Since $R$ is an order $1$ operator, $A$ has a unique continuous extension $\tilde{A} : H^1_0(\Omega^{\circ}) \rightarrow H^1_0(\Omega^{\circ})^*$, given by
\begin{align} \label{eq:elliptic_dirichlet_extension}
	\langle \tilde{A}f, g \rangle 
	&= - \int (a \gradient f) \cdot (\gradient g) - (Rf)g dV.
\end{align}
The desired operator on $L^2$ is obtained by applying \cref{lem:hermitian_sectorial} with $X = L^2(\Omega)$, $Y = H^1_0(\Omega^{\circ})$, $\iota$ the standard inclusion, and $Q = \tilde{A}$.
Note that the maps $\iota$, $\iota^*$, and $\Phi$ in \cref{lem:hermitian_sectorial} are identities, when $X$ and $Y$ are treated as subspaces of distributions.
	
\begin{proposition} \label{pro:elliptic_dirichlet_extension}  \stepcounter{refer}
	Consider $A$ and $\tilde{A}$ defined by \cref{eq:elliptic_dirichlet} and \cref{eq:elliptic_dirichlet_extension}.
	The following statements hold true:
	\begin{enumerate}[(a)] \itemsep 0mm
		\item\labellocal{item:b} There exists $\alpha \in \RR$ such that $\lVert f\rVert_{H^1}^2 \lesssim \alpha \lVert f\rVert_{L^2}^2 - \re \langle \tilde{A} f, \bar{f} \rangle$ for all $f \in H^1_0$.
		\item\labellocal{item:c} Let $A_{e}$ be the maximal operator\footnote{$A_e x = y$ whenever there are members in the remaining three spots completing the diagram.} completing the diagram
		\begin{equation} \labellocal{eq:commute} \begin{tikzcd}[column sep=small]
			L^2(\Omega) \arrow[rrrd, bend right=10, "A_{e}"] & H^{1}_{0}(\Omega^{\circ}) \arrow[l, hook'] \arrow[r, "\tilde{A}"] & H^{1}_{0}(\Omega^{\circ})^* & L^2(\Omega)^* \arrow[l, hook'] \\
			& & & L^2(\Omega) \arrow[u, hook, two heads]
		\end{tikzcd} \end{equation}
		then $A_{e}$ is a sectorial operator on $L^2$ with compact resolvent.
		Furthermore, $\domain(A) = \allowbreak \domain(A_{e}) \allowbreak \cap C^{\infty}$ and $A_e = A$ on $\domain(A)$.
	\end{enumerate}
\end{proposition}

\begin{proof}
	Proof of \reflocal{item:b}. Since $a$ is uniformly positive-definite, there exists $\epsilon > 0$ such that $\langle a \gradient f, \gradient f \rangle_{L^2} \geq \epsilon \lVert \gradient f\rVert_{L^2}^2$.
	Combined with the inequality $\lVert R f\rVert_{L^2} \leq \beta \lVert f\rVert_{H^1}$,
	\begin{align*}
		\re \langle \tilde{A} f, \bar{f} \rangle &\leq -\epsilon \lVert \gradient f\rVert_{L^2}^2 + \beta \lVert f\rVert_{L^2} \lVert f\rVert_{H^1} \\ 
		&\leq -\epsilon \lVert f\rVert_{H^1}^2 + \epsilon \lVert f\rVert_{L^2}^2 + \frac{\epsilon}{4} \lVert f\rVert_{H^1}^2+ \frac{\beta^2}{\epsilon} \lVert f\rVert_{L^2}^2,
	\end{align*}
	where the Cauchy-Schwarz inequality $2ab \leq \epsilon a^2 + b^2 / \epsilon$ is used in the second line. 
	Isolating $\lVert f\rVert_{H^1}^2$ completes the proof.
	
	Proof of \reflocal{item:c}. That $A_e$ extends $A$ by construction.
	By \cref{lem:hermitian_sectorial}, $A_{e}$ is a sectorial operator with compact resolvent\footnote{The inclusion $\iota : H^1(\Omega^{\circ}) \hookrightarrow L^2(\Omega)$ is compact by Rellich-Kondrachov 
	\begin{shownto}{SIAM}
		\cite[Lemma 8.3]{xia23}
	\end{shownto}
	\begin{shownto}{arXiv}
		(\Cref{lem:sobolev_compact})
	\end{shownto}
	.}. 
	It remains to show $\domain(A_{e}) \cap C^{\infty} \subseteq \domain(A)$.
	Since $\domain(A_{e}) \subseteq H^1_0$, it suffices to prove $H^1_0 \cap C^{\infty} \subseteq \domain(A)$.
	For any $v \in C^{\infty}(\Omega, \CC^d)$, consider the map
	\begin{align*}
		\Lambda_v : C^{\infty}(\Omega) \rightarrow \CC 
		: f \mapsto \int_{\partial \Omega} \hat{n} \cdot fv  dS = \int_{\Omega} \divergence(fv) dV
	\end{align*}
	where $\hat{n}$ is the outward unit normal vector on $\partial \Omega$.
	Since $\lvert \Lambda_v f \rvert \lesssim \lVert f\rVert_{H^1}$, $\Lambda_v$ extends to a continuous functional on $H^1$.
	Because $\Lambda_v = 0$ on $C^{\infty}_{c}(\Omega^{\circ})$ which is dense in $H^1_0(\Omega)$, it follows that $\Lambda_v = 0$ on $H^1_0$.
	In particular, $\Lambda_v f = 0$ for every $f \in H^1_0 \cap C^{\infty}$.
	Since $v \in C^{\infty}(\Omega, \CC^d)$ is arbitrary, $f = 0$ on $\partial \Omega$ and $f \in \domain(A)$.
\end{proof}

Since the extension $A_{e}$ is a sectorial operator with compact resolvent, we conclude by \cref{pro:transition_closed}, \cref{cor:transition_welldefined} and \cref{pro:acp_resilient} the following

\begin{corollary} \label{cor:elliptic_dirichlet_ext_inputresilient}
	The $L^2(\Omega)$-relation $\mathcal{P}_e$ defined by $\dot{x} = A_e x + B_e u$ is a closed well-defined relation.
	Furthermore, the following are equivalents:
	\begin{enumerate}[(a)]
		\item $\mathcal{P}_e$ is stable.
		\item If $(0, x) \in \mathcal{P}_e$ with $x(t) = e^{\mu t} x(0)$ for some $\mu \in \CCge{0}$, then $x = 0$.
		\item $\mathcal{P}_e$ is VIVO.
		\item $\mathcal{P}_e$ is smoothly VIVO.
	\end{enumerate}
\end{corollary}

These properties of $\mathcal{P}_e$ can be transferred to the $C^{\infty}$-relation $\mathcal{P}$ of $\dot{x} = Ax + Bu$, by exploiting the connections between these relations.
It is clear that $\mathcal{P} \subseteq \mathcal{P}_{e}$ since $A_{e}$, $B_e$ extend $A$, $B$.
Conversely, we have

\begin{lemma} \label{lem:parabolic_dirichlet_extension} \stepcounter{refer}
	The $L^2(\Omega)$-relation $\mathcal{P}_e$ defined by $\dot{x} = A_e x + B_e u$ satisfies
	\begin{enumerate}[(a)]
		\item\labellocal{item:a} For any $k \in \ZZge{0}$, $\|f\|_{H^{k+2}} \lesssim \|f\|_{L^2} + \|A_e f\|_{H^k}$ uniformly in $f \in \domain(A_e)$.
		\item\labellocal{item:b} For any $k \in \ZZge{0}$, if $(u, x) \in \mathcal{P}_{e}$ and $u \in C^{\infty}_{\local}(\RRge{0}, H^k)$, then $x \in C^{\infty}_{\local}(\RRge{0}, \allowbreak H^{k+2})$.
		Furthermore, if $x$ vanishes smoothly in $L^2$ and $u$ vanishes smoothly in $H^k$, then $x$ vanishes smoothly in $H^{k+2}$.
		\item\labellocal{item:c} If $(u, x) \in \mathcal{P}_{e}$ and $u \in C^{\infty}_{\local}(\RRge{0}, C^{\infty})$, then $x \in C^{\infty}_{\local}(\RRge{0}, C^{\infty})$.
		Furthermore, if $x$ vanishes smoothly in $L^2$ and $u$ vanishes smoothly in $C^{\infty}$, then $x$ vanishes smoothly in $C^{\infty}$.
	\end{enumerate}
\end{lemma}
\begin{proof}
Proof of \reflocal{item:a}.
	By definition, $\int (a \gradient f) \cdot (\gradient g) + (A_e f - Rf)g dV = 0$ for all $f \in \domain(A_e)$ and $g \in H^{1}_{0}(\Omega^{\circ})$.
	By elliptic regularity 
	\begin{shownto}{SIAM}
		\cite[Lemma 8.7]{xia23}
	\end{shownto}
	\begin{shownto}{arXiv}
		(\cref{lem:elliptic_regularity_0})
	\end{shownto}
	, 
	\begin{align*}
		\| f \|_{H^{k+2}} &\lesssim \| f \|_{H^{k+1}} + \| A_e f \|_{H^{k}} + \| Rf \|_{H^k}.
	\end{align*}
	Since $R$ is an order $1$ operator, $\| Rf \|_{H^k}$ is bounded by $\|f\|_{H^{k+1}}$. 
	By Ehrling interpolation 
	\begin{shownto}{SIAM}
		\cite[Lemma 8.4]{xia23}
	\end{shownto}
	\begin{shownto}{arXiv}
		(\cref{lem:ehrling})
	\end{shownto}
	, $\|f\|_{H^{k+1}}$ is bounded by $\epsilon \|f\|_{H^{k+2}} + \alpha \|f\|_{L^2}$ for arbitrarily small $\epsilon > 0$.
	Choosing $\epsilon$ sufficiently small and moving $\epsilon \| f \|_{H^{k+2}}$ to the left-hand side yields the desired inequality.

	Proof of \reflocal{item:b}.
	By assumption, $x \in C^{\infty}_{\local}(\RRge{0}, L^{2})$ and $A_{e} x = B_e u - \dot{x} = B_e u - \dot{x} \in C^{\infty}_{\local}(\RRge{0}, H^k)$.
	By \reflocal{item:a}, $x \in C^{0}_{\local}(\RRge{0}, H^{k+2})$.
	Since $(\partial_t^j u, \partial_t^j x) \in \mathcal{P}_{e}$ for all $j \in \ZZge{0}$, the same argument shows that $\partial_t^j x \in C^{0}_{\local}(\RRge{0}, H^{k+2})$, which means $x \in C^{\infty}_{\local}(\RRge{0}, H^{k+2})$.
	Smooth vanishing of $x$ is proved analogously.
	
	Proof of \reflocal{item:c}
	follows from \reflocal{item:b} and Sobolev inequalities 
	\begin{shownto}{SIAM}
		\cite[Lemma 8.1]{xia23}
	\end{shownto}
	\begin{shownto}{arXiv}
		(\cref{lem:sobolev_inequality})
	\end{shownto}
	.
\end{proof}

The notable consequence of \cref{lem:parabolic_dirichlet_extension} is that $(u, x)$ belongs to the $C^{\infty}$-relation $\mathcal{P}$ if and only if $(u, x) \in \mathcal{P}_e$ and $u \in C^{\infty}_{\local}(\RRge{0}, C^{\infty})$.
Therefore, $\mathcal{P}$ is the restriction of the $L^2$-relation $\mathcal{P}_e \subseteq C^{\infty}_{\local}(\RRge{0}, L^2) \times C^{\infty}_{\local}(\RRge{0}, L^2)$ to $C^{\infty}_{\local}(\RRge{0}, C^{\infty}) \times C^{\infty}_{\local}(\RRge{0}, C^{\infty})$.
Combined with \cref{cor:elliptic_dirichlet_ext_inputresilient}, we deduce

\begin{theorem} \label{pro:parabolic_dirichlet_resilient} \stepcounter{refer}
	The $C^{\infty}(\Omega)$-relation $\mathcal{P}$ defined by \cref{eq:parabolic_dirichlet} is a closed well-defined relation.
	Furthermore, the following are equivalent:
	\begin{enumerate}[(a)]
		\item\labellocal{item:a} $\mathcal{P}$ is stable in $L^2$.
		\item\labellocal{item:b} If $(0, x) \in \mathcal{P}_e$ with $x(t) = e^{\mu t} x(0)$ for some $\mu \in \CCge{0}$, then $x = 0$.
		\item\labellocal{item:c} $\mathcal{P}$ is VIVO in $L^2$.
		\item\labellocal{item:d} $\mathcal{P}$ is smoothly VIVO in $L^2$.
		\item\labellocal{item:e} $\mathcal{P}$ is smoothly VIVO in $H^k$ for every $k \in \ZZge{0}$.
		\item\labellocal{item:f} $\mathcal{P}$ is smoothly VIVO in $C^{\infty}$.
	\end{enumerate}
\end{theorem}


\subsection{Parabolic equations with Neumann boundary condition}

In this section, we consider the parabolic equation
\begin{align} \label{eq:parabolic_neumann}
	\left\{
	\begin{aligned}
		\partial_t x &= \divergence(a \gradient x) + R x + B u &&\text{ on } \RRge{0} \times \Omega \\
		\hat{n} \cdot a \gradient x &= Kx &&\text{ on } \RRge{0} \times \partial \Omega
	\end{aligned}
	\right.
\end{align}
where $\hat{n}$ denotes the outward unit normal vector on $\partial \Omega$ and
\begin{assumption} \label{asp:parabolic_neumann}
	$\Omega$, $a$, $R$, $B$, and $K$ satisfy
	\begin{itemize}
		\item $\Omega \subseteq \RR^d$ is a compact $d$-dimensional $C^{\infty}$-submanifold with boundary.
		\item $a \in C^{\infty}(\Omega, \CC^{d \times d})$ is uniformly positive-definite.
		\item $R$ is an order\footnote{An operator $L : C^{\infty}(\Omega) \rightarrow C^{\infty}(\Omega')$ has order $a$ if $\| Lf \|_{H^k(\Omega')} \lesssim \|f\|_{H^{k+a}(\Omega)}$ uniformly in $f \in C^{\infty}(\Omega)$ for all $k \in \ZZge{0}$.} $1$ operator $C^{\infty}(\Omega) \rightarrow C^{\infty}(\Omega)$.
		\item $B$ is an order $0$ operator $C^{\infty}(\Omega) \rightarrow C^{\infty}(\Omega)$.
		\item $K$ is an order $1$ operator $C^{\infty}(\Omega) \rightarrow C^{\infty}(\partial \Omega)$.
	\end{itemize}
\end{assumption}

We would like to show that the $C^{\infty}(\Omega)$-relation $\mathcal{P}$ defined by \cref{eq:parabolic_neumann} with input $u$ and output $x$ is \prop{}.
Since the closed-loop relation $\sysclose(\mathcal{P}, \lambda)$ also has the form \cref{eq:parabolic_neumann}, the closed-loop systems $\sysclose(\mathcal{P}, \lambda)$ remains \prop{}.

The relation $\mathcal{P}$ coincides with the $C^{\infty}$-relation of $\dot{x} = Ax + Bu$ for $A$ defined by
\begin{align} \label{eq:elliptic_neumann}
	\begin{aligned}
		A &: \domain(A) \rightarrow C^{\infty}(\Omega) : f \mapsto \divergence(a \gradient f) + R f, \\
		&\domain(A) :=  \{f \in C^{\infty}(\Omega) \mid \hat{n} \cdot a \gradient f = Kf \text{ on } \partial \Omega \}.
	\end{aligned}
\end{align}
\change{We will find a sectorial operator $A_e$ on $L^2(\Omega)$ extending the operator $A$, then solutions of $\dot{x} = Ax + Bu$ will be solutions of the abstract Cauchy problem $\dot{x} = A_e x + B_e u$, where $B_e : L^2 \rightarrow L^2$ is the unique bounded extension of $B$ (which exists since $B$ has order $0$).}

Think of $A$ as a linear map from $\domain(A) \subseteq C^{\infty}(\Omega)$ to $C^{\infty}(\Omega)^*$, then
\begin{align*}
	\langle Af, g \rangle_{C^{\infty}(\Omega)^*, C^{\infty}(\Omega)} &= \int_{\Omega} \divergence(a\gradient f) g + (Rf)g dV \\
	&= \int_{\partial \Omega} (Kf) g dS - \int_{\Omega} (a \gradient f) \cdot (\gradient g) dV 
	+ \int_{\Omega} (Rf)g dV
\end{align*}
By the regularities of $K$ and $R$, $A$ extends to a continuous operator $\tilde{A} : H^1(\Omega) \rightarrow H^1(\Omega)^*$ given by
\begin{align} \label{eq:elliptic_neuman_extension}
	\begin{aligned}
	\langle \tilde{A}f, g \rangle 
	&= \int_{\partial \Omega} (Kf) g dS - \int_{\Omega} (a \gradient f) \cdot (\gradient g) - (Rf)g dV.
	\end{aligned}
\end{align}
\change{The operator $A_e$ is constructed by \cref{lem:hermitian_sectorial} with $X = L^2(\Omega)$, $Y = H^1(\Omega)$, $Q = \tilde{A}$, and $\iota$ being the standard inclusion.
Note that the maps $\iota$, $\iota^*$, and $\Phi$ in \cref{lem:hermitian_sectorial} are identities, when $X$ and $Y$ are embedded in the space of distributions $C^{\infty}(\Omega)^*$.}

\begin{proposition} \label{pro:elliptic_neumann_extension} \stepcounter{refer}
	Consider the operator $A$ and $\tilde{A}$ defined by \cref{eq:elliptic_neumann} and \cref{eq:elliptic_neuman_extension}.
	The following statements hold true:
	\begin{enumerate}[(a)] \itemsep 0mm
		\item\labellocal{item:a} 
		There exists $\alpha \in \RR$ such that $\lVert f\rVert_{H^1}^2 \lesssim \alpha \lVert f\rVert_{L^2}^2 - \re \langle \tilde{A} f, \bar{f} \rangle$ for all $f \in H^1$.
		\item\labellocal{item:b} Let $A_{e}$ be the maximal operator\footnote{$A_e x = y$ whenever there are members in the remaining three spots completing the diagram.} completing the diagram
	\begin{equation} \labellocal{eq:commute}
	\begin{tikzcd}[column sep=small]
		L^2(\Omega) \arrow[rrrd, bend right=10, "A_{e}"] & H^{1}(\Omega) \arrow[l, hook'] \arrow[r, "\tilde{A}"] & H^{1}(\Omega)^* & L^2(\Omega)^* \arrow[l, hook'] \\
		& & & L^2(\Omega) \arrow[u, hook, two heads]
	\end{tikzcd}
	\end{equation}
	then $A_{e}$ is a sectorial operator on $L^2$ with compact resolvent.
	Furthermore, $\domain(A) = \allowbreak \domain(A_{e}) \allowbreak \cap C^{\infty}$ and $A_e = A$ on $\domain(A)$.
	\end{enumerate}
\end{proposition}
\begin{proof}
	Proof of \reflocal{item:a}.
	Consider \cref{eq:elliptic_neuman_extension}.
	By uniform positive-definiteness of $a$, there exists $\epsilon > 0$ such that $\langle a \gradient f, \gradient f \rangle_{L^2} \geq \epsilon \lVert \gradient f\rVert_{L^2}^2$.
	In view of $\lVert R f\rVert_{L^2} \lesssim \lVert f\rVert_{H^1}$, $\lVert Kf\rVert_{L^2(\partial \Omega)} \lesssim \lVert f\rVert_{H^1(\Omega)}$, and trace inequalities 
	\begin{shownto}{SIAM}
		\cite[Lemma 8.2]{xia23}
	\end{shownto}
	\begin{shownto}{arXiv}
		(\cref{lem:trace})
	\end{shownto}
	,
	there exists $\beta > 0$ such that
	\begin{align*}
		\epsilon \lVert f \rVert_{H^1}^2 &\leq \beta \lVert f\rVert_{H^1} \lVert f\rVert_{H^{1/2}} - \re \langle \tilde{A}f, \bar{f} \rangle.
	\end{align*}
	By Ehrling interpolation 
	\begin{shownto}{SIAM}
		\cite[Lemma 8.4]{xia23}
	\end{shownto}
	\begin{shownto}{arXiv}
		(\cref{lem:ehrling})
	\end{shownto}
	, there exists $\gamma$ such that $\beta \lVert f\rVert_{H^{1/2}} \leq \gamma \lVert f\rVert_{L^2} + \epsilon \lVert f\rVert_{H^1} / 2$.
	This implies
	\begin{align*}
		\frac{\epsilon}{2} \| f \|_{H^1}^2 &\leq \gamma \lVert f\rVert_{H^1} \lVert f\rVert_{L^2} - \re \langle \tilde{A}f, \bar{f} \rangle.
	\end{align*}
	The desired inequality follows from Cauchy-Schwarz inequality.

	Proof of \reflocal{item:b}.
	By \cref{lem:hermitian_sectorial}, $A_{e}$ a sectorial operator with compact resolvent\footnote{The inclusion $\iota : H^1(\Omega) \hookrightarrow L^2(\Omega)$ is compact by Rellich-Kondrachov 
	\begin{shownto}{SIAM}
		\cite[Lemma 8.3]{xia23}
	\end{shownto}
	\begin{shownto}{arXiv}
		(\Cref{lem:sobolev_compact})
	\end{shownto}
	.}.
	
	It is clear by the definition of $\tilde{A}$ that $\domain(A) \subseteq \domain(A_e) \cap C^{\infty}$ and $A = A_e$ on $\domain(A)$.
	Assume $f \in \domain(A_{e}) \cap C^{\infty}$, then $f \in H^1$ and $A_e f = \tilde{A} f$ as distributions.
	This means $\int (A_e f) g dV = \langle \tilde{A}f, g \rangle$ for all $g \in H^1$, which writes
	\begin{align*}
		 \int_{\Omega} (A_e f - \divergence (a \gradient f) - Rf) g dV &=
		\int_{\partial \Omega} (Kf - \hat{n} \cdot a \gradient f) g dS.
	\end{align*}
	Substituting $g \in C^{\infty}_{c}(\Omega^{\circ})$ (i.e., vanishing near $\partial \Omega$) shows $A_e f = \divergence (a \gradient f) + Rf$.
	Since every smooth function on $\partial \Omega$ is the restriction of some $g \in C^{\infty}(\Omega)$, we conclude that $\hat{n} \cdot a \gradient f = Kf$ on $\partial \Omega$, hence $f \in \domain(A)$.
\end{proof}

\change{

Since the extension $A_{e}$ is a sectorial operator with compact resolvent, we conclude by \cref{pro:transition_closed}, \cref{cor:transition_welldefined} and \cref{pro:acp_resilient} the following

\begin{corollary} \label{cor:elliptic_neumann_ext_inputresilient}
	The $L^2(\Omega)$-relation $\mathcal{P}_e$ defined by $\dot{x} = A_e x + B_e u$ is closed and well-defined.
	Furthermore, the following are equivalents:
	\begin{enumerate}[(a)]
		\item $\mathcal{P}_e$ is stable.
		\item If $(0, x) \in \mathcal{P}_e$ with $x(t) = e^{\mu t} x(0)$ for some $\mu \in \CCge{0}$, then $x = 0$.
		\item $\mathcal{P}_e$ is VIVO.
		\item $\mathcal{P}_e$ is smoothly VIVO.
	\end{enumerate}
\end{corollary}

Similar properties can be concluded about the $C^{\infty}$-relation $\mathcal{P}$ defined by $\dot{x} = Ax + Bu$, using the following identification of $\mathcal{P}$ as a subspace of $\mathcal{P}_e$.

\begin{lemma} \label{lem:parabolic_neumann_extension} \stepcounter{refer}
	The $L^2(\Omega)$-relation $\mathcal{P}_e$ defined by $\dot{x} = A_e x + B_e u$ satisfies
	\begin{enumerate}[(a)]
		\item\labellocal{item:a} For any $k \in \ZZge{0}$, $\|f\|_{H^{k+2}} \lesssim \|f\|_{L^2} + \|A_e f\|_{H^k}$ uniformly in $f \in \domain(A_e)$.
		\item\labellocal{item:b} For any $k \in \ZZge{0}$, if $(u, x) \in \mathcal{P}_{e}$ and $u \in C^{\infty}_{\local}(\RRge{0}, H^k)$, then $x \in C^{\infty}_{\local}(\RRge{0}, \allowbreak H^{k+2})$.
		Furthermore, if $u$ vanishes smoothly in $H^k$ and $x$ vanishes smoothly in $L^2$, then $x$ vanishes smoothly in $H^{k+2}$.
		\item\labellocal{item:c} If $(u, x) \in \mathcal{P}_{e}$ and $u \in C^{\infty}_{\local}(\RRge{0}, C^{\infty})$, then $x \in C^{\infty}_{\local}(\RRge{0}, C^{\infty})$.
		Furthermore, if $x$ vanishes smoothly in $L^2$ and $u$ vanishes smoothly in $C^{\infty}$, then $x$ vanishes smoothly in $C^{\infty}$.
	\end{enumerate}
\end{lemma}

\begin{proof}
	Proof of \reflocal{item:a}.
	By definition, $\int (a \gradient f) \cdot (\gradient g) + (A_e f - Rf)g dV = \int_{\partial \Omega} (Kf)g dS$ for all $f \in \domain(A_e)$ and $g \in H^{1}$.
	By elliptic regularity 
	\begin{shownto}{SIAM}
		\cite[Lemma 8.9]{xia23}
	\end{shownto}
	\begin{shownto}{arXiv}
		(\cref{lem:elliptic_regularity_1})
	\end{shownto}
	, 
	\begin{align*}
		\| f \|_{H^{k+2}} &\lesssim \| f \|_{H^{k+1}} + \| A_e f \|_{H^{k}} + \| Rf \|_{H^k} + \| Kf \|_{H^{k+\frac{1}{2}}(\partial \Omega)}.
	\end{align*}
	By \cref{asp:parabolic_neumann}, both $\| Rf \|_{H^k}$ and $\|Kf\|_{H^{k+\frac{1}{2}}}$ are bounded by $\|f\|_{H^{k+\frac{3}{2}}}$.
	By Ehrling interpolation 
	\begin{shownto}{SIAM}
		\cite[Lemma 8.4]{xia23}
	\end{shownto}
	\begin{shownto}{arXiv}
		(\cref{lem:ehrling})
	\end{shownto}
	, $\|f\|_{H^{k+\frac{3}{2}}}$ is bounded by $\epsilon \|f\|_{H^{k+2}} + \alpha \|f\|_{L^2}$ for arbitrarily small $\epsilon > 0$.
	Choosing $\epsilon$ sufficiently small and moving $\epsilon \| f \|_{H^{k+2}}$ to the left-hand side yields the desired inequality.

	Proof of \reflocal{item:b}.
	By assumption, $x \in C^{\infty}_{\local}(\RRge{0}, L^{2})$ and $A_{e} x = B_e u - \dot{x} = B_e u - \dot{x} \in C^{\infty}_{\local}(\RRge{0}, H^k)$.
	By \reflocal{item:a}, $x \in C^{0}_{\local}(\RRge{0}, H^{k+2})$.
	Since $(\partial_t^j u, \partial_t^j x) \in \mathcal{P}_{e}$ for all $j \in \ZZge{0}$, the same argument shows that $\partial_t^j x \in C^{0}_{\local}(\RRge{0}, H^{k+2})$, which means $x \in C^{\infty}_{\local}(\RRge{0}, H^{k+2})$.
	Smooth vanishing of $x$ is proved analogously.
	
	Proof of \reflocal{item:c}
	follows from \reflocal{item:b} and Sobolev inequalities 
	\begin{shownto}{SIAM}
		\cite[Lemma 8.1]{xia23}
	\end{shownto}
	\begin{shownto}{arXiv}
		(\cref{lem:sobolev_inequality})
	\end{shownto}
	.
\end{proof}

Combining \cref{lem:parabolic_neumann_extension} and \cref{cor:elliptic_neumann_ext_inputresilient} yields

}

\begin{theorem} \label{pro:parabolic_neumann_resilient} \stepcounter{refer}
	The $C^{\infty}(\Omega)$-relation $\mathcal{P}$ defined by \cref{eq:parabolic_neumann} is a closed well-defined relation.
	Furthermore, the following are equivalent:
	\begin{enumerate}[(a)]
		\item\labellocal{item:a} $\mathcal{P}$ is stable in $L^2$.
		\item\labellocal{item:b} If $(0, x) \in \mathcal{P}_e$ with $x(t) = e^{\mu t} x(0)$ for some $\mu \in \CCge{0}$, then $x = 0$.
		\item\labellocal{item:c} $\mathcal{P}$ is VIVO in $L^2$.
		\item\labellocal{item:d} $\mathcal{P}$ is smoothly VIVO in $L^2$.
		\item\labellocal{item:e} $\mathcal{P}$ is smoothly VIVO in $H^k$ for every $k \in \ZZge{0}$.
		\item\labellocal{item:f} $\mathcal{P}$ is smoothly VIVO in $C^{\infty}$.
	\end{enumerate}
\end{theorem}

\subsection{Parabolic equations with Neumann boundary input} \label{sect:parabolicsystem_boundaryinput}

The previous section only studied parabolic equations with in-domain inputs.
This section considers a generalized equation with boundary input.
The equation of interest has the form
\begin{align} \label{eq:parabolic_neumann_input}
	\left\{
	\begin{aligned}
		\partial_t x &= \divergence(a \gradient x) + R x + Bu && \text{ on } \RRge{0} \times \Omega, \\
		\hat{n} \cdot a \gradient x &= Kx + Mu &&\text{ on } \RRge{0} \times \partial \Omega,
	\end{aligned}
	\right.
\end{align}
where
\begin{assumption} \label{asp:parabolic_neumann_input}
	$\Omega$, $a$, $R$, $B$, $K$, and $M$ satisfy
	\begin{itemize} \itemsep 0mm
		\item $\Omega$ is a compact $d$-dimensional $C^{\infty}$-submanifold with boundary.
		\item $a \in C^{\infty}(\Omega, \CC^{d \times d})$ is uniformly positive-definite.
		\item $R$ is an order $1$ operator $C^{\infty}(\Omega) \rightarrow C^{\infty}(\Omega)$.
		\item $B$ is an order $0$ operator $C^{\infty}(\Omega) \rightarrow C^{\infty}(\Omega)$.
		\item $K$ and $M$ are order $1$ operators $C^{\infty}(\Omega) \rightarrow C^{\infty}(\partial \Omega)$.
	\end{itemize}
\end{assumption}

We will show that the $C^{\infty}(\Omega)$-relation $\mathcal{P}$ defined by \cref{eq:parabolic_neumann_input} is \prop{}.
\change{The strategy is to establish a correspondence between \cref{eq:parabolic_neumann_input} and a parabolic equation without boundary input, and to infer \propnoun{} from the latter equation.}
The following lemma is needed for the correspondence.

\begin{lemma} \label{lem:boundary_solution} \stepcounter{refer}
	Given $a$ and $R$ described in \cref{asp:parabolic_neumann_input}, 
	there exists a continuous linear map $J : C^{\infty}(\partial \Omega, \CC) \rightarrow C^{\infty}(\Omega, \CC)$ and some $\mu \in \RRge{0}$ such that 
	\begin{align*}
		\left\{
		\begin{aligned}
			\divergence(a \gradient Jh) + R Jh &= \mu Jh &&\text{ on } \Omega, \\
			\hat{n} \cdot a \gradient (Jh) &= KJh - h &&\text{ on } \partial \Omega
		\end{aligned}
		\right.
	\end{align*}
	for all $h \in C^{\infty}(\partial \Omega)$.
	Furthermore, $\lVert Jh\rVert_{H^{k+2}(\Omega)} \lesssim \lVert h\rVert_{H^{k+(1/2)}(\partial \Omega)}$ for every $k \in \ZZge{0}$.
\end{lemma}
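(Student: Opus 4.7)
The plan is to construct $J$ by solving a weak elliptic problem with a boundary source. I would work with the bilinear form associated to $\tilde{A}$ from the Neumann subsection, namely
\begin{align*}
    B_{\mu}(f, g) := \mu \int_{\Omega} fg\, dV - \int_{\partial \Omega}(Kf)g\, dS + \int_{\Omega} (a\gradient f)\cdot(\gradient g) - (Rf)g\, dV,
\end{align*}
which is bounded on $H^1(\Omega) \times H^1(\Omega)$ by the regularities of $R$ and $K$ together with the trace theorem. By \cref{pro:elliptic_neumann_extension}\reflocal{item:a}, $B_{\mu}$ is coercive on $H^1(\Omega)$ once $\mu$ exceeds the constant $\alpha$ furnished there; I would therefore fix any $\mu \in \RRge{0}$ with $\mu \geq \alpha$.

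Given $h \in H^{-1/2}(\partial \Omega)$, the pairing $g \mapsto \langle h, g \rangle_{\partial \Omega}$ is a bounded linear functional on $H^1(\Omega)$ through the trace map $H^1(\Omega) \to H^{1/2}(\partial \Omega)$. By Lax--Milgram, there is a unique $Jh \in H^1(\Omega)$ with $B_{\mu}(Jh, g) = \langle h, g \rangle_{\partial \Omega}$ for all $g \in H^1$, and $\lVert Jh \rVert_{H^1} \lesssim \lVert h \rVert_{H^{-1/2}(\partial \Omega)}$. Testing this identity first against $g \in C^{\infty}_{c}(\Omega^{\circ})$ yields the interior equation $\divergence(a \gradient Jh) + R(Jh) = \mu Jh$ in the distributional sense, and once this is known, testing against arbitrary $g \in C^{\infty}(\Omega)$ and integrating by parts identifies the boundary condition $\hat{n} \cdot a \gradient (Jh) = K Jh - h$ in $H^{-1/2}(\partial \Omega)$.

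The remaining task is the regularity bound $\lVert Jh \rVert_{H^{k+2}(\Omega)} \lesssim \lVert h \rVert_{H^{k+(1/2)}(\partial \Omega)}$, which I would obtain by induction on $k$ using the elliptic regularity result for mixed/oblique boundary problems (\cref{lem:elliptic_regularity_1}, the same tool invoked in the proof of \cref{lem:parabolic_neumann_extension}\reflocal{item:a}). At each step, the interior PDE controls $\lVert Jh \rVert_{H^{k+2}}$ by $\lVert Jh \rVert_{H^{k+3/2}} + \lVert h \rVert_{H^{k+(1/2)}(\partial \Omega)}$, and the lower Sobolev norm is absorbed using Ehrling interpolation (\cref{lem:ehrling}) together with the base estimate $\lVert Jh \rVert_{H^1} \lesssim \lVert h \rVert_{H^{-1/2}}$. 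Summing these estimates and applying Sobolev embedding then shows $J$ maps $C^{\infty}(\partial \Omega)$ continuously into $C^{\infty}(\Omega)$.

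The main obstacle I anticipate is the correct bookkeeping of the boundary source in the weak formulation, particularly ensuring that the resulting equality $\hat{n} \cdot a \gradient Jh = K Jh - h$ holds with the correct sign and is meaningful at the $H^{-1/2}$ level before regularity is upgraded. The elliptic regularity step itself is standard once the right mixed boundary condition is identified, but packaging it uniformly across all $k$ so that $J$ is continuous on the Fr\'echet scale requires care.
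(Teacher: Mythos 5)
Your proposal is correct and follows essentially the same route as the paper: the paper defines $J = (\tilde{A}-\mu)^{-1}\iota$ with $\iota h : g \mapsto \int_{\partial\Omega} hg\,dS$ (invertibility from \cref{pro:elliptic_neumann_extension} plus Lax--Milgram), then bootstraps regularity via \cref{lem:elliptic_regularity_1} and Ehrling interpolation and finishes with Sobolev embedding, exactly as you outline. The one detail to fix is the sign you already flagged: with your normalization $B_{\mu}(Jh,g)=\langle h,g\rangle_{\partial\Omega}$ the boundary identity comes out as $\hat{n}\cdot a\gradient(Jh)=KJh+h$, so you should take the right-hand side to be $-\langle h,g\rangle_{\partial\Omega}$ (equivalently replace $J$ by $-J$) to match the stated conclusion.
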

\begin{proof}
	Recall the operator $\tilde{A} : H^1 \rightarrow (H^1)^*$ defined in \cref{eq:elliptic_neuman_extension}.
	By \cref{pro:elliptic_neumann_extension} and Lax-Milgram, there exists $\mu \in \RRge{0}$ such that $\tilde{A} - \mu$ is invertible.
	Let $\iota : C^{\infty}(\partial \Omega) \rightarrow H^1(\Omega)^*$ denote the boundary operator taking each $h \in C^{\infty}(\partial \Omega)$ to
	\begin{align*}
		\iota h : H^1(\Omega) \rightarrow \CC : g \mapsto \int_{\partial \Omega} h g dS.
	\end{align*}
	We claim that $J = (\tilde{A} - \mu)^{-1} \iota$.
	
	A priori, $J = (\tilde{A} - \mu)^{-1} \iota$ is continuous map from $H^{1/2}(\partial \Omega)$ to $H^1(\Omega)$.
	The equality $(\tilde{A} - \mu)f = \iota h$ means
	\begin{align} \labellocal{eq:1}
		\int_{\partial \Omega} (Kf - h) g dS &= \int_{\Omega} (a \gradient f) \cdot (\gradient g) + (\mu f - Rf)g dV
	\end{align}
	for all $g \in H^1(\Omega)$. 
	By elliptic regularity 
	\begin{shownto}{SIAM}
		\cite[Lemma 8.9]{xia23} a
	\end{shownto}
	\begin{shownto}{arXiv}
		(\cref{lem:elliptic_regularity_1}) a
	\end{shownto}
	nd \cref{asp:parabolic_neumann_input}, 
	we deduce $\|f\|_{H^{k+2}} \lesssim \|f\|_{H^{k+3/2}} + \|h\|_{H^{k+1/2}(\partial \Omega)}$.
	By Ehrling interpolation, $\|f\|_{H^{k+3/2}}$ may be replaced by $\|f\|_{H^1}$.
	Therefore, $J$ is continuous from $H^{k+1/2}(\partial \Omega)$ to $H^{k+2}(\Omega)$ for every $k \in \ZZge{0}$.
	By Sobolev inequalities, $J$ maps $C^{\infty}(\partial \Omega)$ continuously into $C^{\infty}(\Omega)$.
	
	When $f$, $g$, $h$ are smooth, \creflocal{eq:1} may be written as
	\begin{align*}
		\int_{\partial \Omega} (Kf - h - \hat{n} \cdot (a \gradient f)) g dS &= \int_{\Omega} (\mu f - Rf - \divergence(a\gradient f)) g dV
	\end{align*}
	This implies $\divergence(a \gradient f) + R f = \mu f$ on $\Omega$ and $\hat{n} \cdot (a \gradient f) = Kf - h$ on $\partial \Omega$.
\end{proof}

For any solution $(u, x)$ of \cref{eq:parabolic_neumann_input}, the function $z := JMu \in C^{\infty}_{\local}(\RRge{0}, C^{\infty})$ is a solution of
\begin{align*}
	\left\{
	\begin{aligned}
		\partial_t z &= \divergence(a \gradient z) + R z + JM\partial_t u - \mu JMu &&\text{ on } \Omega, \\
		\hat{n} \cdot a \gradient z &= Kz - Mu &&\text{ on } \partial \Omega.
	\end{aligned}
	\right.
\end{align*}
The boundary input in \cref{eq:parabolic_neumann_input} can be removed by adding $z$.
This allows us to obtain

\begin{theorem} \label{thm:parabolic_neumann_input} \stepcounter{refer}
	The $C^{\infty}(\Omega)$-relation $\mathcal{P}$ defined by \cref{eq:parabolic_neumann_input} is a closed well-defined relation.
	Furthermore, the following are equivalent:
	\begin{enumerate}[(a)] 
		\item $\mathcal{P}$ is stable in $L^2$.
		\item $\mathcal{P}$ is smoothly VIVO in $H^k$ for every $k \in \ZZge{2}$.
		\item $\mathcal{P}$ is smoothly VIVO in $C^{\infty}$.
	\end{enumerate}
\end{theorem}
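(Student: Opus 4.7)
The plan is to reduce \cref{eq:parabolic_neumann_input} to a Neumann-boundary parabolic equation with in-domain inputs, to which \cref{pro:parabolic_neumann_resilient} can be applied.

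Following the discussion preceding the theorem, I would introduce the change of variables $w = x + z$ with $z = JMu$, where $J$ is the lifting operator from \cref{lem:boundary_solution}. A direct computation (expanding $\partial_t w$ and using both lines of the system satisfied by $z$) shows that $(u, x)$ solves \cref{eq:parabolic_neumann_input} if and only if $w = x + JMu$ satisfies
\begin{align*}
	\partial_t w &= \divergence(a \gradient w) + R w + v \quad \text{on } \Omega, \\
	\hat{n} \cdot a \gradient w &= K w \quad \text{on } \partial \Omega,
\end{align*}
with $v := B u + JM \partial_t u - \mu JM u$. This is precisely a Neumann equation of the form \cref{eq:parabolic_neumann} with in-domain input $v$, whose $C^{\infty}(\Omega)$-relation $\mathcal{Q}$ is closed, well-defined, and \prop{} by \cref{pro:parabolic_neumann_resilient}. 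The input map $\mathcal{T}: u \mapsto v$ is a continuous linear endomorphism of $C^{\infty}_{\local}(\RRge{0}, C^{\infty})$ (since $B$ is of order $0$, $J$ and $M$ are continuous operators, $\partial_t$ preserves regularity and smooth vanishing, and $\mu$ is a scalar), and the change of variables $(u, x) \mapsto (u, x + JMu)$ is a bicontinuous linear bijection $\mathcal{P} \to \mathcal{Q} \circ \mathcal{T}$. This establishes the first assertion: $\mathcal{P}$ inherits closedness and well-definedness from $\mathcal{Q}$.

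For the equivalences, I would prove the cycle (c) $\Rightarrow$ (a) $\Rightarrow$ (b) $\Rightarrow$ (c). The direction (c) $\Rightarrow$ (a) is immediate: set $u = 0$ and use the continuous embedding $C^{\infty}(\Omega) \hookrightarrow L^2(\Omega)$ valid on the compact $\Omega$. For (a) $\Rightarrow$ (b), observe that when $u = 0$ we have $v = 0$ and $w = x$, so stability of $\mathcal{P}$ in $L^2$ is equivalent to stability of $\mathcal{Q}$ in $L^2$; by \cref{pro:parabolic_neumann_resilient}, $\mathcal{Q}$ is then smoothly VIVO in $H^k$ for every $k \in \ZZge{0}$. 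Given $u$ smoothly vanishing in $H^k$ with $k \geq 2$, I would verify that $v = \mathcal{T}u$ smoothly vanishes in $H^k$, using that $B$ is order $0$ and that $JM : H^k(\Omega) \to H^{k+1/2}(\Omega) \hookrightarrow H^k(\Omega)$ is continuous; this follows by composing the order-$1$ trace-type estimate for $M$ with the regularity bound $\lVert Jh \rVert_{H^{k+2}(\Omega)} \lesssim \lVert h \rVert_{H^{k+1/2}(\partial \Omega)}$ from \cref{lem:boundary_solution}. Hence $w$ smoothly vanishes in $H^k$, and so does $x = w - JMu$. Finally, (b) $\Rightarrow$ (c) follows from the pair of continuous embeddings $C^{\infty}(\Omega) \hookrightarrow H^k(\Omega)$ (transferring smooth vanishing of $u$ in $C^{\infty}$ to smooth vanishing in each $H^k$) and the Sobolev embeddings $H^k(\Omega) \hookrightarrow C^j(\Omega)$ valid for $k - j > d/2$ (recovering smooth vanishing in $C^{\infty}$ of $x$ from smooth vanishing in $H^k$ for all $k$).

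The principal piece of bookkeeping, and the reason for the restriction $k \geq 2$ in (b), is the Sobolev mapping properties of the lifting $JM$. Because $M$ is of order $1$ into $\partial \Omega$ and $J$ requires at least an $H^{1/2}$ input on $\partial \Omega$ to produce an element of $H^{2}(\Omega)$, the composition $JM$ becomes a bounded endomorphism of $H^k(\Omega)$ only once $k \geq 2$; this is exactly the regularity threshold needed for the equivalence with $\mathcal{Q}$ to run at each $H^k$. Beyond this estimate, the remainder of the argument is essentially formal, reducing to \cref{pro:parabolic_neumann_resilient} applied to $\mathcal{Q}$.
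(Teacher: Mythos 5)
Your proposal is correct and follows essentially the same route as the paper: the change of variables $w = x + JMu$ via the lifting operator of \cref{lem:boundary_solution}, reduction to the in-domain Neumann relation of \cref{pro:parabolic_neumann_resilient} with modified input $Bu + JM\partial_t u - \mu JMu$, and the boundedness of $JM$ on $H^k$ for $k \geq 2$ as the key estimate. You merely spell out the implication cycle and the origin of the $k \geq 2$ threshold in more detail than the paper does.
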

\begin{proof}
	Let $J$ be the operator defined in \cref{lem:boundary_solution}, then $(u, x) \in \mathcal{P}$ iff $(u, z) = (u, x + JMu)$ belongs to the $C^{\infty}$-relation $\mathcal{P}'$ defined by
	\begin{align} \labellocal{eq:1}
		\left\{
		\begin{aligned}
			\partial_t z &= \divergence(a \gradient z) + R z + Fu &&\text{ on } \RRge{0} \times \Omega, \\
			\hat{n} \cdot a \gradient z &= Kz &&\text{ on } \RRge{0} \times \partial \Omega
		\end{aligned}
		\right.
	\end{align}
	where $Fu = Bu + JM \partial_t u - \mu JM u$.
	Consider $\mathcal{P}'$ as the composition of the $C^{\infty}$-relation defined by \creflocal{eq:1} with input $Fu$ and output $z$ and the $C^{\infty}$-relation defined by $F$.
	Both relations are closed and well-defined (\cref{pro:parabolic_neumann_resilient}), hence so are $\mathcal{P}'$ and $\mathcal{P}$.
	The equivalence follows from the fact that for $k \in \ZZge{0}$, $\|JMf\|_{H^k} \lesssim \|f\|_{H^k}$ and $Fu$ smoothly vanishes in $H^k$ whenever $u$ smoothly vanishes in $H^k$.
\end{proof}

\change{
Having established \propnoun{} of parabolic equations with Neumann boundary inputs,
\cref{cor:sync} specializes to a sufficient and necessary condition for synchronization in a network of such systems.
It is worth noting that the general synchronization results for infinite-dimensional systems in \cite{demetriou13} are not applicable to this network due to the boundary coupling (the associated ``$B$'' operator is unbounded).
Unlike most results in the literature (e.g. \cite{pilloni16,wu16,deutscher22}), which provide sufficient conditions for synchronization in the $L^2$ or $H^2$ norm using Lyapunov analysis, our approach provides sufficient and necessary conditions for synchronization in the $C^{\infty}$ (and $H^k$) norm in terms of the stability of the decoupled subsystems in the same norm.
The application of our results to synchronization analysis is illustrated in the next subsection.
}

\subsection{A synchronization example} \label{sect:example}

\change{
Consider a network of $n$ identical heat equations
\begin{align*}
	\left\{
	\begin{aligned}
		\partial_t x_j(t, \xi) &= \partial_{\xi}^2 x_j(t, \xi) \qquad \text{on } \RRge{0} \times [0,1], \\
		\partial_{\xi} x_j(t, 0) &= 0, \\
		\partial_{\xi} x_j(t, 1) &= u_j(t), \\
		y_j(t) &= x_j(t, 0),
	\end{aligned}
	\right.
\end{align*} 
indexed by $j \in \{1, \ldots, n\}$, and coupled by diffusive coupling $u_j = \sum_{i=1}^{n} \sigma_{ji} (y_i - y_j)$.
We seek conditions on the coupling parameters $\sigma_{ji}$ for which the states $x_j$ converge to each other in the $C^{\infty}([0,1])$ norm. 
}

\change{
To apply \cref{cor:sync} for state synchronization, we consider an equivalent representation of the network as the interconnections of
\begin{align} \label{eq:heat_equation}
	\left\{
	\begin{aligned}
		\partial_t x_j(t, \xi) &= \partial_{\xi}^2 x_j(t, \xi) \qquad \text{on } \RRge{0} \times [0,1], \\
		\partial_{\xi} x_j(t, 0) &= 0, \\
		\partial_{\xi} x_j(t, 1) &= v_j(t, 0) =: M v_j(t),
	\end{aligned}
	\right.
\end{align}
with the new input $v_j := \sum_{i=1}^{n} \sigma_{ji} (x_i - x_j)$ and $M$ defined as the evaluation operator at $\xi = 0$.
The network is $\sysclose(\mathcal{P}^{\oplus n}, L)$, where $\mathcal{P}$ is the $C^{\infty}([0,1])$-relation defined by \cref{eq:heat_equation} with input $v_j$ and output $x_j$, and $L$ is the matrix
\begin{align*}
	L &= \begin{bmatrix} \sigma_{11} & \cdots & \sigma_{1n} \\ \vdots & \ddots & \vdots \\ \sigma_{n1} & \cdots & \sigma_{nn} \end{bmatrix} - \begin{bmatrix} \sum_{i=1}^{n} \sigma_{1i} & & 0 \\ & \ddots & \\ 0 & & \sum_{i=1}^{n} \sigma_{ni} \end{bmatrix}. 
\end{align*} 
The closed-loop $\sysclose(\mathcal{P}, \lambda)$ is the $C^{\infty}$-relation defined by
\begin{align} \label{eq:heat_closed}
	\left\{
	\begin{aligned}
		\partial_t x(t, \xi) &= \partial_{\xi}^2 x(t, \xi) \qquad \text{on } \RRge{0} \times [0,1], \\
		\partial_{\xi} x(t, 0) &= 0, \\
		\partial_{\xi} x(t, 1) &= \lambda M x(t) + M v(t),
	\end{aligned}
	\right.
\end{align}
with input $v$ and output $x$.
By \cref{thm:parabolic_neumann_input}, the system $\sysclose(\mathcal{P}, \lambda)$ is \prop{} in $C^{\infty}([0,1])$.
It follows from \cref{cor:sync} that the network $\sysclose(\mathcal{P}^{\oplus n}, L)$ synchronizes in $C^{\infty}$ iff the $\sysclose(\mathcal{P}, \lambda)$ is stable in $C^{\infty}$ for every $\lambda \in \spec(L)$, excluding the zero eigenvalue associated to the eigenvector $\mathbf{1}_n$.
}

\change{
Since stability in $C^{\infty}$ is weaker than the smooth VIVO property in $C^{\infty}$ and stronger than stability in $L^2$, we deduce that stability of $\sysclose(\mathcal{P}, \lambda)$ in $C^{\infty}$ is equivalent to stability in $L^2$.
In view of condition (b) of \cref{pro:parabolic_neumann_resilient}, $\sysclose(\mathcal{P}, \lambda)$ is stable iff the trivial solution is the boundary-value problem $\mu x(\xi) = \partial_{\xi}^2 x(\xi)$, $\partial_{\xi} x(0) = 0$, $\partial_{\xi} x(1) = \lambda x(0)$ has no non-trivial solution with $\mu \in \CCge{0}$. 
For fixed $\mu$, the boundary-value problem has a non-trivial solution iff $\sqrt{\mu} \sinh(\sqrt{\mu}) = \lambda$.
Therefore, $\sysclose(\mathcal{P}, \lambda)$ is stable iff $\lambda \in \CC$ is outside the set
\begin{align*}
	\{re^{i\theta} \sinh(re^{i\theta}) \mid r \geq 0,\, \theta \in [-\pi/4, \pi/4] \}.
\end{align*}
We conclude that the network $\sysclose(\mathcal{P}^{\oplus n}, L)$ synchronizes in $C^{\infty}$ iff all eigenvalues of $L$, except one instance of zero, belong to the shaded region in \cref{fig:heat_equation_stabreg}.
}

\begin{figure}[t]
	\change{
	\centering
	\includegraphics[width=6.7cm]{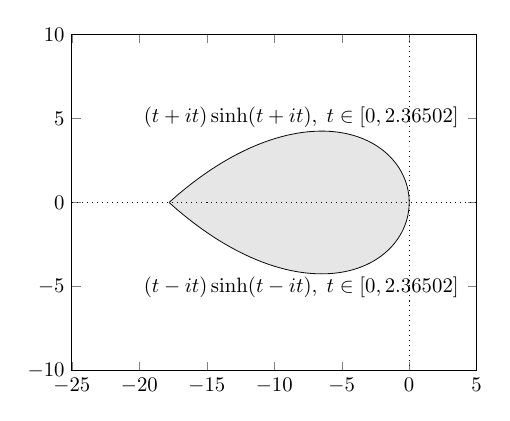}
	\caption{\change{The shaded region is the set of $\lambda \in \CC$ for which the closed-loop system $\sysclose(\mathcal{P}, \lambda)$ in \Cref{sect:example} is stable.}}
	\label{fig:heat_equation_stabreg}
	}
\end{figure}

\change{
The evolution of a network of three heat equations with the coupling matrix
\begin{align} \label{eq:heat_equation_coupling}
	L &= \alpha \begin{bmatrix} -1 & 1 & 0 \\ 0 & -1 & 1 \\ 1 & 0 & -1 \end{bmatrix}
\end{align}
is simulated by approximating each PDE by an ODE using the finite difference method with a spatial step-size of $\Delta \xi = 0.02$ and solving the system of three ODEs using the Matlab ode45 solver with a temporal step-size of $\Delta t = 0.05$. 
The simulation results for several choices of $\alpha$ are shown in \cref{fig:heat_simulation}.
Since the nonzero eigenvalues of $L$ are $\alpha(-3 \pm \sqrt{3}) / 2$, \cref{fig:heat_equation_stabreg} predicts synchronization when $\alpha$ is in the interval $(0, 4.8709)$.
The prediction agrees with the simulation results.
}

\begin{figure}[t]
	\begin{subfigure}{\textwidth}
		\begin{minipage}{0.2\textwidth}
			\caption{$\alpha = 4$}
		\end{minipage}%
		\begin{minipage}{0.8\textwidth}
		\includegraphics[width=0.5\textwidth]{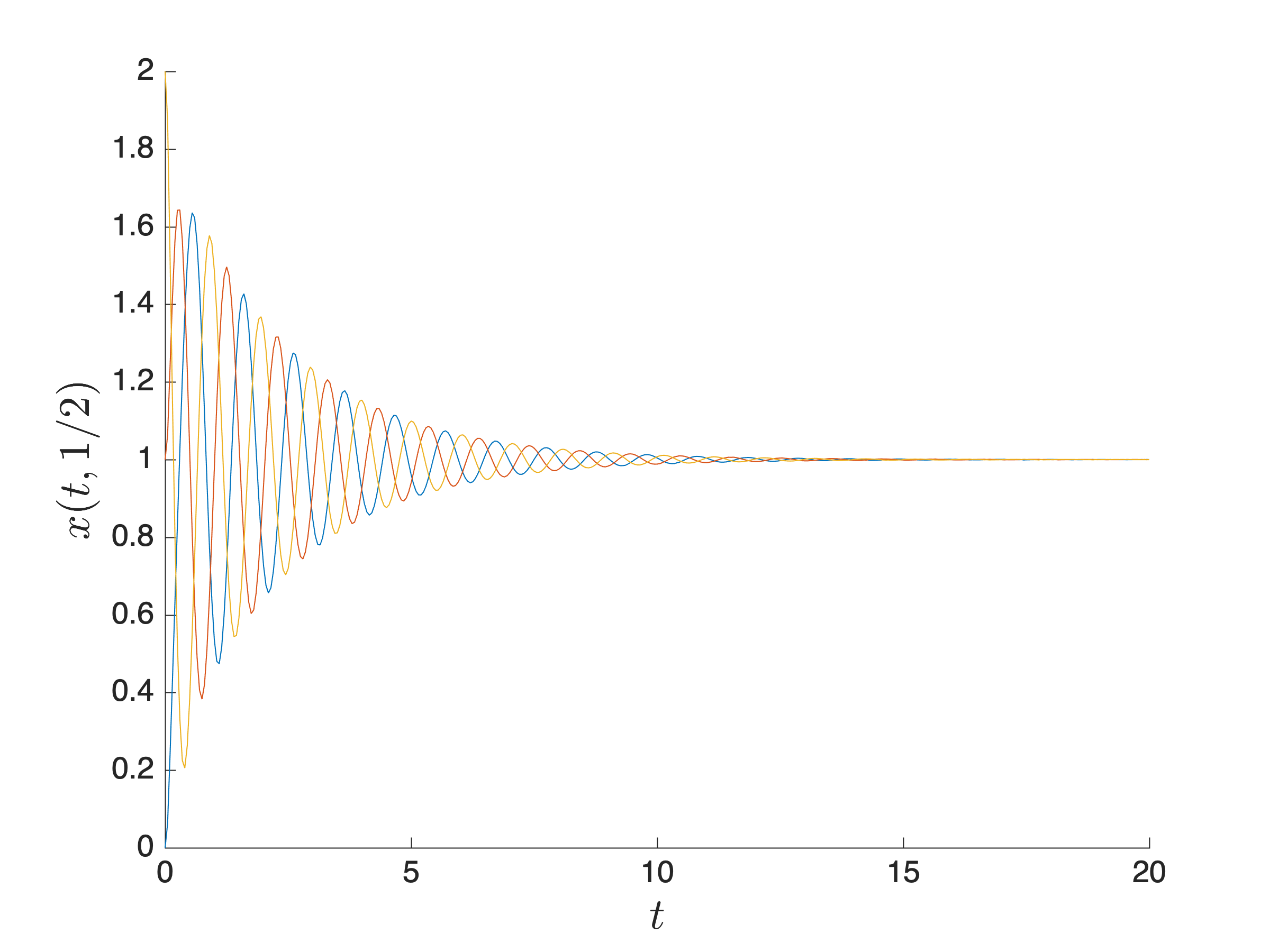}%
		\includegraphics[width=0.5\textwidth]{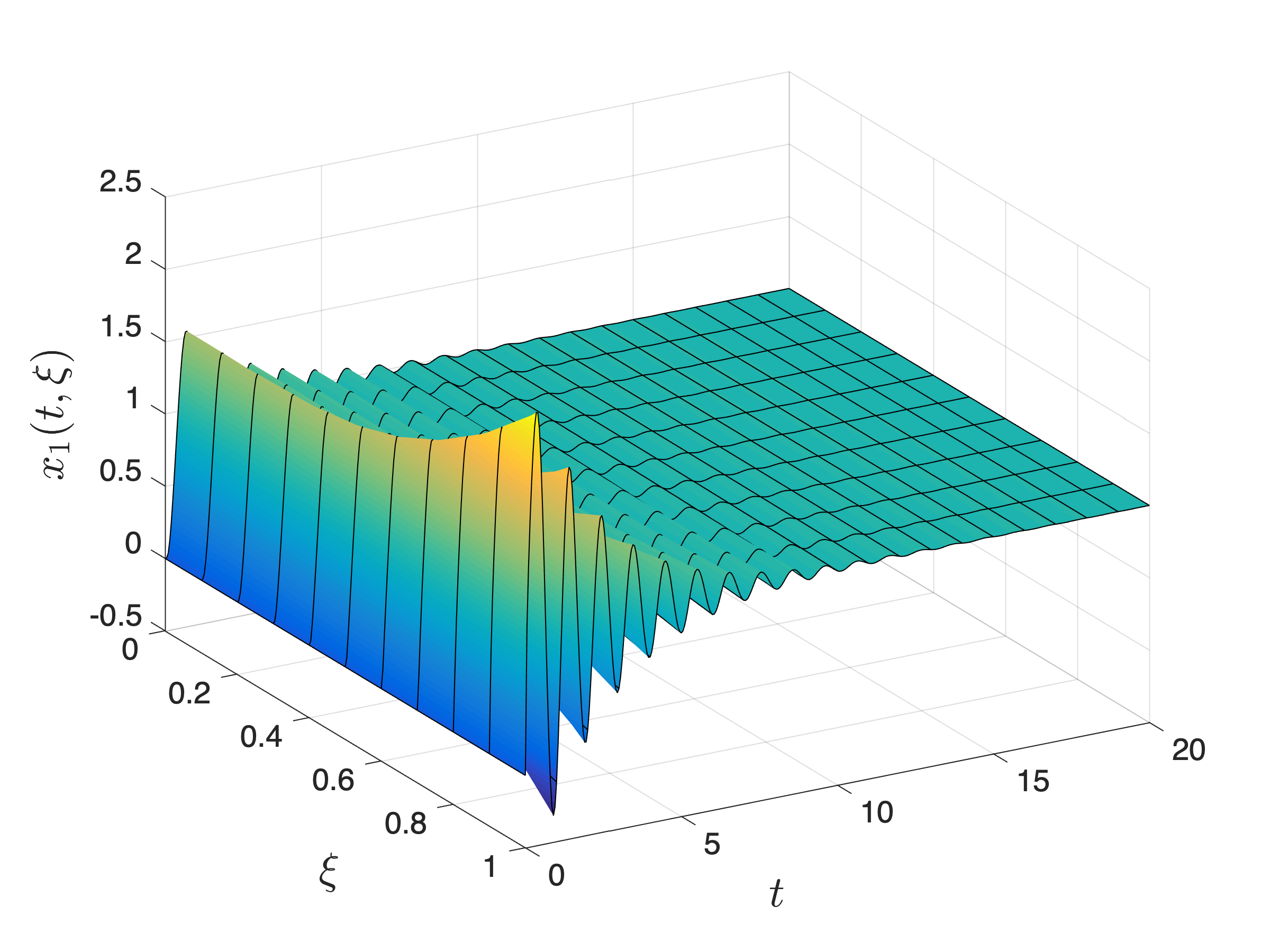}
		\end{minipage}%
	\end{subfigure}%
	
	\begin{subfigure}{\textwidth}
		\begin{minipage}{0.2\textwidth}
			\caption{$\alpha = 4.77$}
		\end{minipage}%
		\begin{minipage}{0.8\textwidth}
		\includegraphics[width=0.5\textwidth]{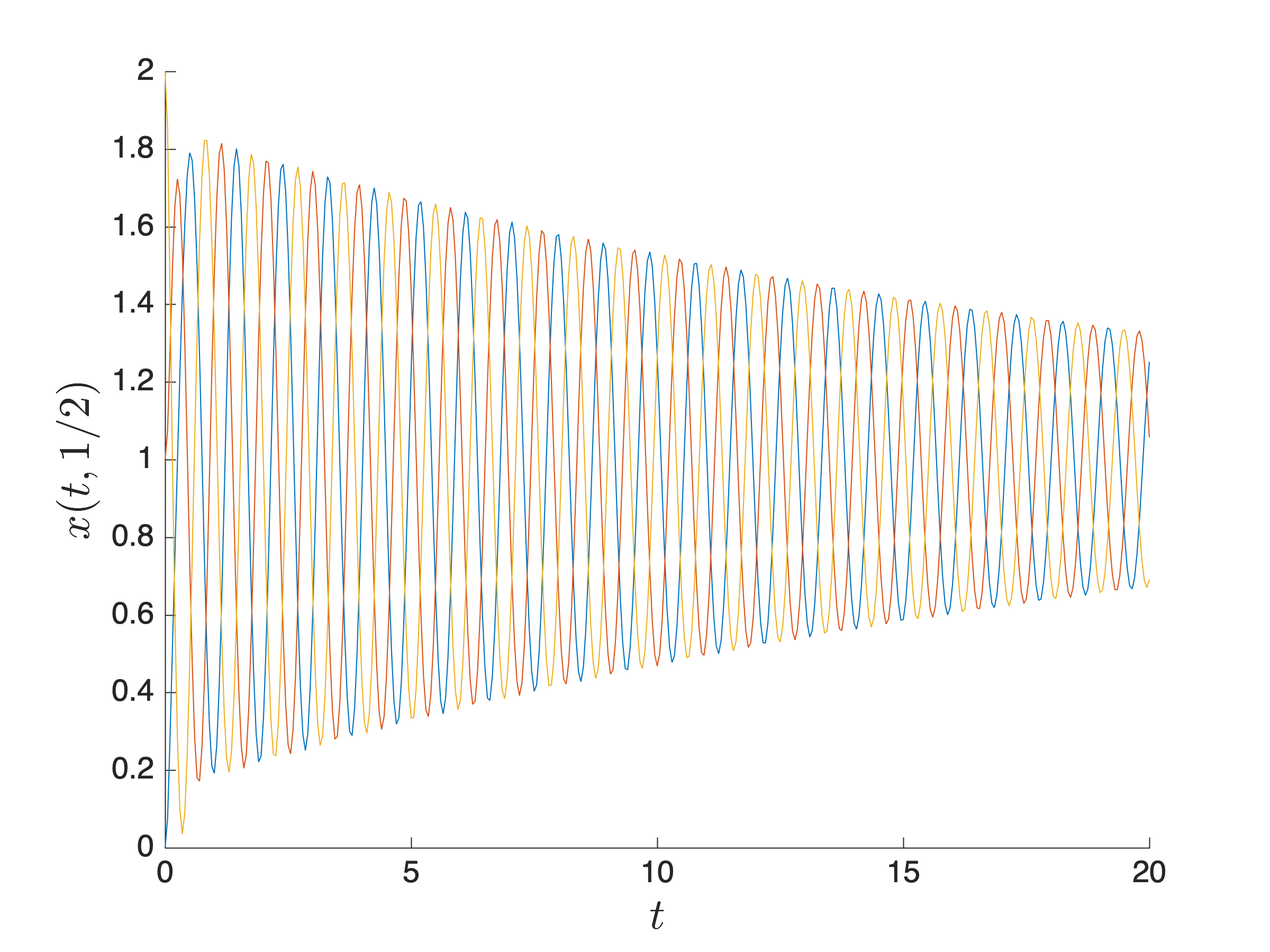}%
		\includegraphics[width=0.5\textwidth]{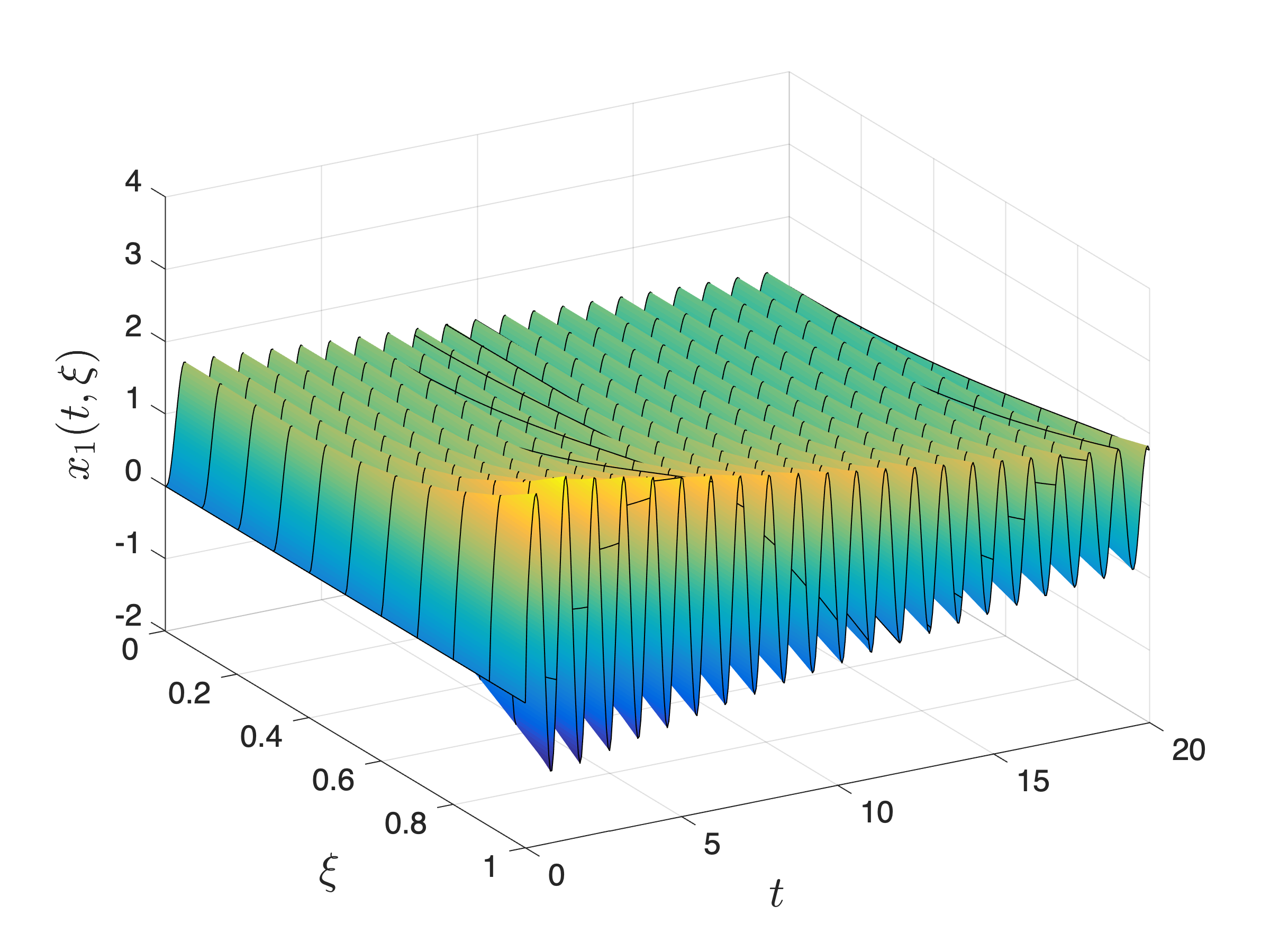}
		\end{minipage}
	\end{subfigure}%
	
	\begin{subfigure}{\textwidth}
		\begin{minipage}{0.2\textwidth}
			\caption{$\alpha = 4.87$}
		\end{minipage}%
		\begin{minipage}{0.8\textwidth}
		\includegraphics[width=0.5\textwidth]{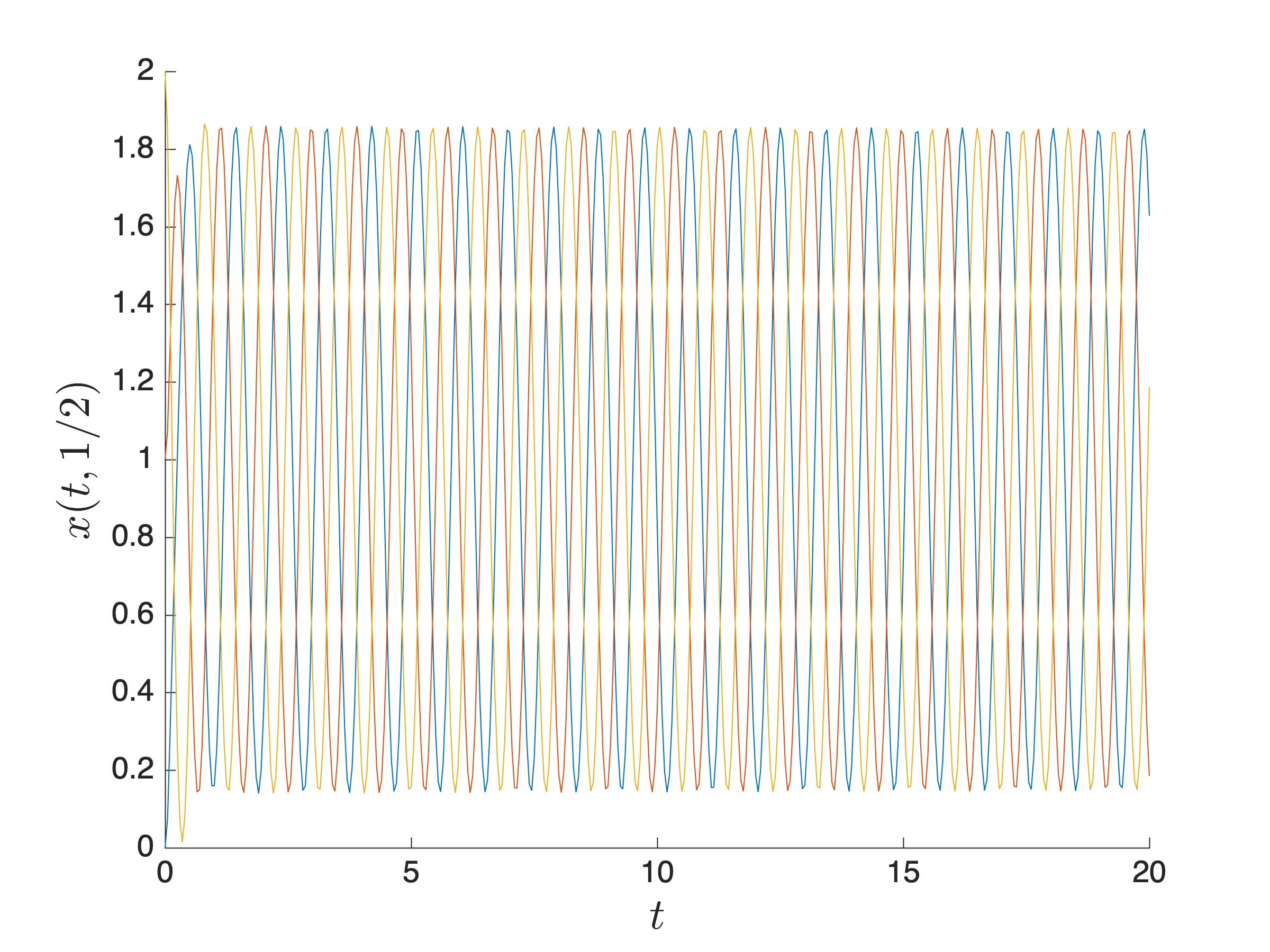}%
		\includegraphics[width=0.5\textwidth]{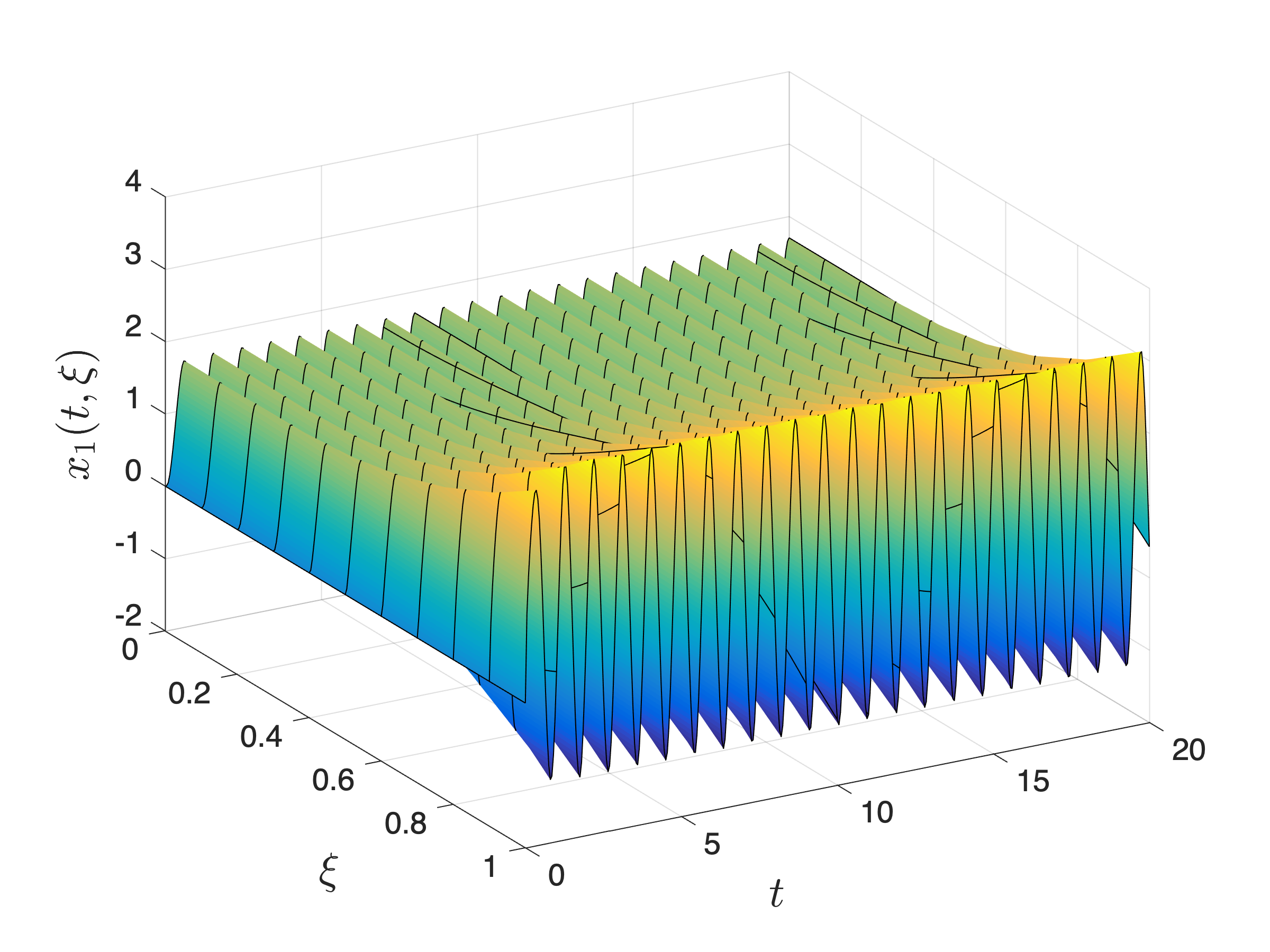}
		\end{minipage}%
	\end{subfigure}%
	
	\begin{subfigure}{\textwidth}
		\begin{minipage}{0.2\textwidth}
			\caption{$\alpha = 4.97$}
		\end{minipage}%
		\begin{minipage}{0.8\textwidth}
		\includegraphics[width=0.5\textwidth]{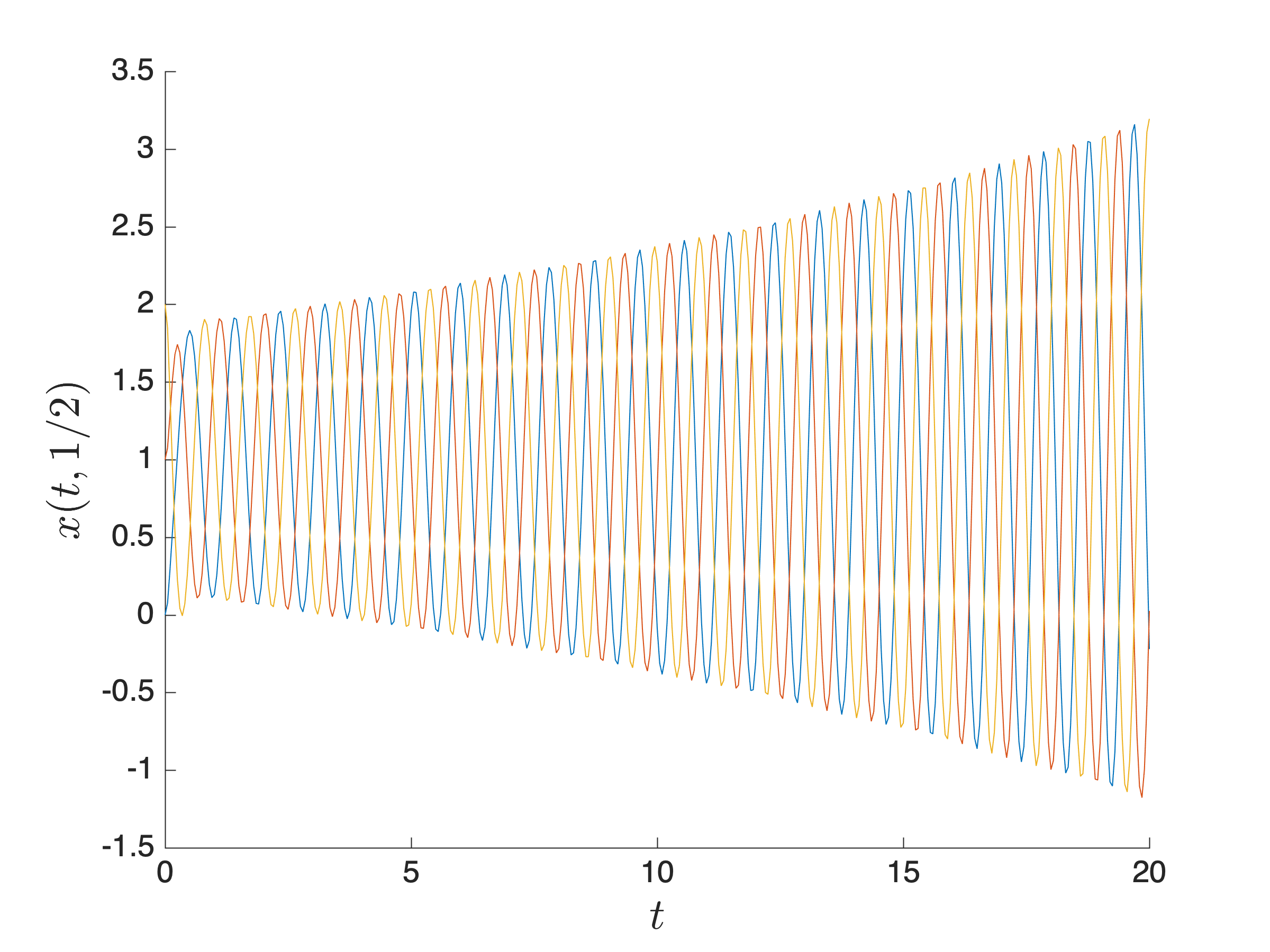}%
		\includegraphics[width=0.5\textwidth]{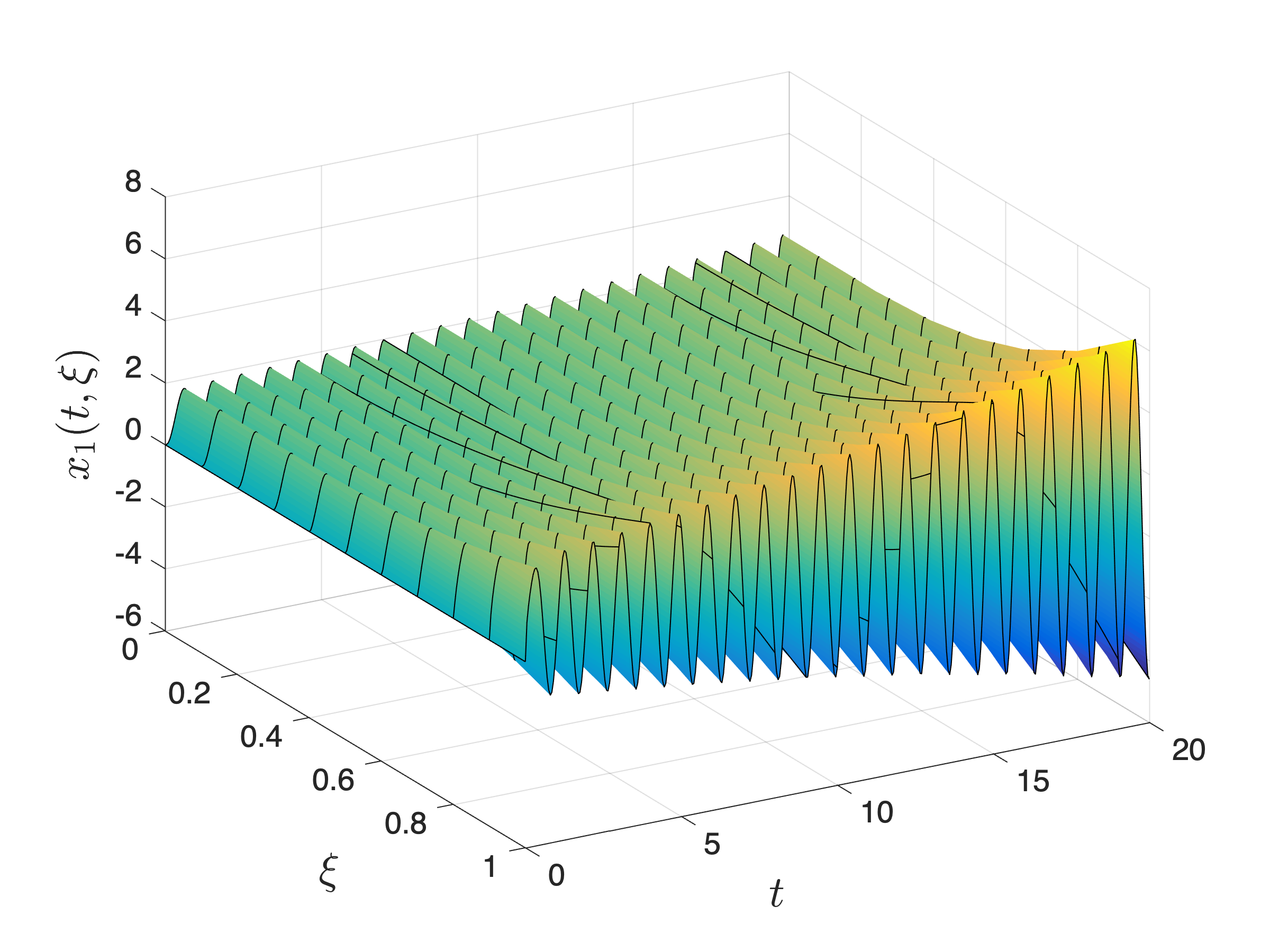}
		\end{minipage}%
	\end{subfigure}%
	
	\caption{\change{Simulation of the network $\sysclose(\mathcal{P}^{\oplus 3}, L)$ with $\mathcal{P}$ defined by \cref{eq:heat_equation} and $L$ defined by \cref{eq:heat_equation_coupling}. 
	Left: the evolution of the states $x_1, x_2, x_3$ evaluated at $\xi = 1/2$.
	Right: the evolution of the $x_1$.}}
	\label{fig:heat_simulation}
\end{figure}

\section{Delay differential equations} \label{sect:delayed}

Consider the equation
\begin{align} \label{eq:delayed}
	\begin{aligned}
		\dot{x}(t) &= \sum_{j=0}^{m} \left( A_j x(t - t_j) + B_j u(t-t_j) \right),
	\end{aligned}
\end{align}
where $0 = t_0 < t_1 < \dots < t_m$, $A_j \in \Endom(X)$, and $B_j \in \Homom(U, X)$ for finite-dimensional spaces $U$ and $X$.
We will show that the relation $\mathcal{P}$ defined by \cref{eq:delayed} with input $u \in C^{\infty}_{\local}(\RRge{0}, U)$ and output $x \in C^{\infty}_{\local}(\RRge{0}, X)$ is \prop{}.
To be precise, $\mathcal{P}$ contains smooth $(u, x)$ such that \cref{eq:delayed} holds on $t \geq t_m$.

\begin{proposition} \label{pro:delayed_welldefined} \stepcounter{refer}
	The relation $\mathcal{P}$ defined by \cref{eq:delayed} satisfies
	\begin{enumerate}[(a)]
		\item\labellocal{item:a}
		Given $u \in C^{\infty}_{\local}(\RRge{0}, U)$ and $x \in C^{0}_{\local}(\RRge{0}, X)$, the inclusion $(u, x) \in \mathcal{P}$ holds if and only if the restriction of $x$ to $[0, t_m]$ is a smooth function whose left-sided derivatives at $t_m$ satisfy
		\begin{talign} \labellocal{eq:1}
			\begin{aligned}
				\partial_t^{i+1} x(t_m) &= \sum_{j=0}^{m} A_j \partial_t^{i} x(t_m - t_j)
				 + \sum_{j=0}^{m} B_j \partial_t^{i} u(t_m-t_j)
			\end{aligned}
		\end{talign}
		for all $i \in \ZZge{0}$, and for $t \geq t_m$,
		\begin{talign} \label{eq:delay_integral}
			\begin{aligned}
				x(t) &= e^{A_0 (t - t_m)} x(t_m) 
				 + \int_{t_m}^{t} e^{A_0 (t - s)} \sum_{j=1}^{m} A_j x(s-t_j) ds \\
				&\qquad + \int_{t_m}^{t} e^{A_0 (t - s)} \sum_{j=0}^{m} B_j u(s-t_j) ds.
			\end{aligned}
		\end{talign}
		\item\labellocal{item:b} $\mathcal{P}$ is well-defined.
		\item\labellocal{item:c} $\mathcal{P}$ is closed.
	\end{enumerate}
\end{proposition}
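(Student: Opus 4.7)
The plan is to prove (a) first, since both (b) and (c) then follow almost immediately. Claim (a) is an equivalence, so I would split it into forward and reverse implications, the reverse being the technical part.

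For the forward direction of (a), assume $(u,x) \in \mathcal{P}$, so $x \in C^{\infty}_{\local}(\RRge{0}, X)$ and \cref{eq:delayed} holds for $t \geq t_m$. Differentiating \cref{eq:delayed} in time $i$ times and evaluating at $t = t_m$ (noting that for $j \geq 1$ the arguments $t_m - t_j$ lie in the interior of $[0, t_m]$, so no one-sided issues arise) immediately yields the compatibility identities \creflocal{eq:1}. To obtain \cref{eq:delay_integral}, rewrite \cref{eq:delayed} as $\dot{x}(t) = A_0 x(t) + f(t)$ on $[t_m, \infty)$, where $f(t) = \sum_{j=1}^{m} A_j x(t-t_j) + \sum_{j=0}^{m} B_j u(t-t_j)$, and apply the standard variation-of-constants formula for the finite-dimensional ODE.

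For the reverse direction, assume the stated hypotheses on $x$. Evaluating \cref{eq:delay_integral} at $t = t_m$ gives $\lim_{t \to t_m^+} x(t) = x(t_m)$, matching the continuous $x$ on $[0, t_m]$. Differentiating \cref{eq:delay_integral} recovers \cref{eq:delayed} on $(t_m, \infty)$. To obtain smoothness on $[t_m, \infty)$, I would bootstrap along the intervals $I_k := [t_m + (k-1) t_1, t_m + k t_1]$: on $I_1$, all delayed arguments $t - t_j$ for $j \geq 1$ lie in $[0, t_m]$ where $x$ is $C^{\infty}$ by assumption, hence the right-hand side of \cref{eq:delayed} is $C^{\infty}$ in $t$ on $I_1$, so $x$ is $C^{\infty}$ on $I_1$; induction on $k$ propagates this to all of $[t_m, \infty)$. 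Finally, the compatibility identities \creflocal{eq:1} are precisely what equate the left-sided derivatives $\partial_t^{i+1} x(t_m^-)$ (available from the assumed smooth $x$ on $[0, t_m]$) with the right-sided derivatives $\partial_t^{i+1} x(t_m^+)$ (obtained by differentiating \cref{eq:delayed} on $(t_m, \infty)$ and taking the limit), so the two pieces glue to $x \in C^{\infty}_{\local}(\RRge{0}, X)$, placing $(u, x) \in \mathcal{P}$.

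For (b), given $u \in C^{\infty}_{\local}(\RRge{0}, U)$, I construct $x$ on $[0, t_m]$ satisfying the hypotheses of (a). Set $x \equiv 0$ on $[0, t_m - t_1/2]$. Since $t_m - t_j \leq t_m - t_1$ for $j \geq 1$, all the values $\partial_t^i x(t_m - t_j)$ vanish, so the recursion \creflocal{eq:1} reduces to $\partial_t^{i+1} x(t_m) = A_0 \partial_t^{i} x(t_m) + \sum_{j=0}^{m} B_j \partial_t^{i} u(t_m - t_j)$, which inductively determines a sequence $(x_i)_{i \geq 0}$ in $X$. Borel's lemma (\cref{lem:borel}) produces a smooth function on $\RR$ whose Taylor series at $t_m$ is $(x_i)$; multiplying by a smooth cutoff supported in $(t_m - t_1/2, t_m]$ and adding the zero piece yields $x \in C^{\infty}([0, t_m], X)$ with the required compatibility. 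Defining $x$ on $[t_m, \infty)$ via \cref{eq:delay_integral} and invoking (a) gives $(u, x) \in \mathcal{P}$.

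For (c), $\mathcal{P}$ is the intersection, over $t \geq t_m$, of the kernels of the evaluation maps $(u, x) \mapsto \dot{x}(t) - \sum_{j} A_j x(t-t_j) - \sum_{j} B_j u(t-t_j)$, each of which is continuous from $C^{\infty}_{\local}(\RRge{0}, U) \times C^{\infty}_{\local}(\RRge{0}, X)$ to $X$ by the definition of the topology on $C^{\infty}_{\local}$; thus $\mathcal{P}$ is closed. The main obstacle is the bootstrap in the reverse direction of (a): carefully tracking how smoothness on $[0, t_m]$ feeds forward along the intervals $I_k$, and simultaneously verifying that the compatibility identities \creflocal{eq:1} match derivatives of \emph{all} orders at the junction $t_m$, so that no hidden loss of regularity occurs.
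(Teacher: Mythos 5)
Your proof is correct and follows essentially the same route as the paper: reduce (a) to variation-of-constants plus the compatibility recursion at $t_m$, bootstrap regularity on $[t_m,\infty)$ interval by interval, and observe that the same recursion governs both one-sided derivative sequences starting from the shared value $x(t_m)$; for (b) build the initial segment via Borel's lemma and a cutoff, and for (c) observe that $\mathcal P$ is an intersection of closed constraint kernels. The only cosmetic difference is that in (b) you normalize the initial history to zero on $[0,t_m-t_1/2]$ (you should also fix $x_0:=x(t_m)$, say to $0$, to seed the recursion, and take the cutoff $\equiv 1$ near $t_m$ so the prescribed derivatives survive multiplication), whereas the paper extends an arbitrary smooth history on $[0,t_m-t_1]$; both are adequate for well-definedness.
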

\begin{proof}
	Proof of \reflocal{item:a}.
	Necessity is obvious. 
	In fact, given a continuous input and a initial condition $x \in C^{0}_{\local}([0, t_m], X)$, the unique continuous extension of $x$ solving \cref{eq:delayed} on $t > t_m$ is given by \cref{eq:delay_integral}. 
	This can be seen by writing \cref{eq:delayed} as $\dot{x} = A_0 x + v$ where $v(t) = v(t) = \sum_{j=1}^{m}A_j x(t - t_j) + \sum_{j=0}^{m}B_j u(t - t_j)$.
	
	To prove sufficiency, note that $x$ must verify \cref{eq:delayed} on $t > t_m$ by \cref{eq:delay_integral}.
	Therefore, we deduce by induction that $x \in C^{\infty}_{\local}(\RRg{t_m}, X)$. 
	By continuity, the right-sided derivatives of $x$ at $t_m$ verify \creflocal{eq:1}.
	Since the left-sided and right-sided derivatives agree at $t_m$, $x \in C^{\infty}_{\local}(\RRge{0}, X)$.
	
	Proof of \reflocal{item:b}.
	Fix any $u \in C^{\infty}_{\local}(\RRge{0}, U)$ and $x \in C^{\infty}([0, t_m-t_1], X)$, we will extend $x$ such that $(u, x) \in \mathcal{P}$.
	Pick any the sequence $\{\partial_t^{i} x(t_m)\}_{i}$ verifying \creflocal{eq:1}.
	There exists by Borel's lemma 
	\begin{shownto}{SIAM}
		\cite[Lemma 8.5]{xia23} a
	\end{shownto}
	\begin{shownto}{arXiv}
		(\cref{lem:borel}) a
	\end{shownto}
	n extension $x \in C^{\infty}([0, t_m], X)$ satisfying \creflocal{eq:1}.
	Define $x$ on $\RRge{t_m}$ using \cref{eq:delay_integral} yields a function $x$ satisfying $(u, x) \in \mathcal{P}$.
	
	Proof of \reflocal{item:c}. $\mathcal{P}$ is an intersection of continuous constraints.
\end{proof}

To prove \propnoun{} of $\mathcal{P}$, we will study a discrete-time representation of \cref{eq:delayed}.
The discrete-time representation is obtained using 

\begin{lemma} \label{pro:delayed_solution} \stepcounter{refer}
	Consider \cref{eq:delayed}.
	There exist functions $p \in C^{0}_{\local}(\RRge{t_m}, \Endom(X))$, $f \in L^{\infty}_{\local}(\RRge{t_m} \times [0, t_m], \Endom(X))$, $g \in L^{\infty}_{\local}(\RRge{t_m} \times \RRge{0}, \Homom(U,X))$, such that every $(u, x) \in C^{0}_{\local}(\RRge{0}, U) \times C^{0}_{\local}(\RRge{0}, X)$ solves \cref{eq:delay_integral} for all $t \geq t_m$ if and only if it solves
	\begin{align} \labellocal{eq:2}
			\begin{aligned}
			x(t) &= p(t) x(t_m) + \int_{0}^{t_m} f(t, s) x(s) ds 
			 + \int_{0}^{t} g(t, s) u(s) ds
			\end{aligned}
	\end{align}
	for all $t \geq t_m$.
	Furthermore, $f, g$ may be chosen to be continuous outside of 
	\begin{align*}
		S := \bigcup_{j \in \{0, \ldots, m\}} \{(t, s) \mid s = t_m - t_j \text{ or } s = t - t_j\}.
	\end{align*}
\end{lemma}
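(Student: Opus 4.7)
My plan is to recast \cref{eq:delay_integral} as a Volterra integral equation of the second kind on $[t_m, \infty)$, solve it via a resolvent kernel, and then read off the claimed representation \creflocal{eq:2}. In each delay term of \cref{eq:delay_integral} I would substitute $\tau = s - t_j$ and split the resulting integration range $[t_m - t_j,\, t - t_j]$ at $\tau = t_m$: the portion $\tau \leq t_m$ involves only the ``initial data'' $x|_{[0, t_m]}$ or $u|_{[0, t_m]}$, while the portion $\tau \geq t_m$ involves $x|_{[t_m, t]}$. Collecting terms rewrites \cref{eq:delay_integral} as
\[
x(t) = F(t) + \int_{t_m}^{t} K(t, s)\, x(s)\, ds, \qquad t \geq t_m,
\]
with $F(t) = p_0(t) x(t_m) + \int_0^{t_m} f_0(t, s)\, x(s)\, ds + \int_0^t g_0(t, s)\, u(s)\, ds$, where $p_0(t) = e^{A_0(t - t_m)}$, $K(t, s) = \sum_{j=1}^m \mathds{1}_{s + t_j \leq t}\, e^{A_0(t - t_j - s)}\, A_j$, and $f_0, g_0$ are explicit piecewise-continuous kernels whose only indicator-function boundaries lie on the lines $s = t_m - t_j$ or $s = t - t_j$, hence in $S$.

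Next, I would solve this Volterra equation via the Neumann series $R = \sum_{n \geq 1} K^{\ast n}$, with $(K \ast L)(t, s) := \int_s^t K(t, r) L(r, s)\, dr$. The structural observation that $K(t, s) = 0$ for $s > t - t_1$ gives, by induction, $K^{\ast n}(t, s) = 0$ for $s > t - n t_1$, so on every compact $[t_m, T]$ the series reduces to a finite sum; in particular $R \in L^\infty_\local$, and the standard Volterra identity $x = F + R \ast F$ holds. Expanding $F$ inside this identity and invoking Fubini's theorem produces
\[
p(t) = p_0(t) + \int_{t_m}^t R(t, r) p_0(r)\, dr, \qquad f(t, s) = f_0(t, s) + \int_{t_m}^t R(t, r) f_0(r, s)\, dr,
\]
\[
g(t, s) = g_0(t, s) + \int_{\max(s, t_m)}^t R(t, r) g_0(r, s)\, dr,
\]
which gives the desired form. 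Every step in this construction is reversible, so the two formulations are equivalent as claimed.

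The remaining task is to verify the regularity assertions. Boundedness on compact sets follows from the finite-sum structure of $R$. Continuity of $p$ holds because $R$ (which depends only on $t - r$) convolved with the continuous $p_0$ is continuous. For $f$ and $g$, the jumps inherited from $f_0, g_0$ already lie in $S$; I would then argue that no new jump lines appear in the $R$-integrals, because (i) the $r$-jumps of $R(t, r)$ are smoothed by integration in $r$, and (ii) the $s$-jumps of $f_0(r, s)$ and $g_0(r, s)$ that depend on $r$ occur only along $s = r - t_j$, which contributes to $f(t, s)$ or $g(t, s)$ only when $r = s + t_j$ reaches the endpoints $\{t_m, t\}$ of the $r$-integration, i.e., at $s = t_m - t_j$ or $s = t - t_j$. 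The main obstacle is exactly this bookkeeping: one must verify that iterated convolutions of $K$ do not inject spurious jump lines such as $s = t - (t_{j_1} + \cdots + t_{j_n})$ into the final $f, g$. They don't, because in the final formulas the $s$-dependence is entirely carried by $f_0$ and $g_0$ while $R$ enters only through an $r$-integration that averages its discontinuities away.
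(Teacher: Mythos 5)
Your proposal is correct and takes essentially the same route as the paper's proof: you extract the same base kernels $p_0$, $f_0$, $g_0$ from \cref{eq:delay_integral} and then iterate, with the key point in both arguments being that the kernel vanishes for $s > t - t_1$, so only finitely many iterates survive on any compact time interval. The only difference is packaging --- the paper runs an explicit induction over the intervals $[t_m, t_m+(k+1)t_1]$ with recursively defined $p_k, f_k, g_k$, whereas you sum the iterates into a locally finite Volterra resolvent $R$ and write the kernels in one shot as $p_0 + R * p_0$, etc.; your jump-line bookkeeping for the continuity claim is an equivalent (and arguably more transparent) version of the paper's argument via continuity of $t \mapsto f_0(t,\cdot)$ into $L^2$.
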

\begin{proof}
	For every input $u \in C^{0}_{\local}(\RRge{0}, U)$ and initial condition $x \in C^{0}_{\local}([0,t_m], X)$, \cref{eq:delay_integral} has a unique solution $x \in C^{0}_{\local}(\RRge{0}, X)$.
	Thus, it suffices to prove that solutions of \cref{eq:delay_integral} are solutions of \creflocal{eq:2}.
	Furthermore, at fixed $t \geq 0$, the functions $p(t), f(t, \cdot), g(t, \cdot)$ are unique if they exist, so it suffices to prove their existence on bounded $t$-intervals.
	Let $T_k = [t_m, t_m + (k+1) t_1]$.
	We will prove by induction on $k$ that there exists $p$, $f$, $g$ such that continuous solutions $(u,x)$ of \cref{eq:delay_integral} solve \creflocal{eq:2} on $t \in T_k$.
	The functions $p$, $f$, $g$ obtained at step $k$ are distinguished as $p_k$, $f_k$, $g_k$.
	
	By simple algebraic manipulations, \cref{eq:delay_integral} is equivalent to
	\begin{align} \labellocal{eq:3}
		x(t) &= p_0(t) x(t_m) + \int_{0}^{t} f_0(t, s) x(s) ds 
		 + \int_{0}^{t} g_0(t, s) u(s) ds
	\end{align}
	with
	\begin{align*}
		p_0(t) &= e^{A_0(t-t_m)},\quad f_0(t, s) = \sum_{j=1}^{m} e^{A_0(t-t_j-s)} A_j \ind{\{t_m - t_j \leq s \leq t - t_j\}},\\
		g_0(t, s) &= \sum_{j=0}^{m} e^{A_0(t-t_j-s)} B_j \ind{\{t_m - t_j \leq s \leq t - t_j\}}.
	\end{align*}
	When $t \in T_0 = [t_m, t_m + t_1]$, the function $f_0(t, \cdot)$ is supported on $[0,t_m]$.
	This proves the base case $k = 0$.
	
	Consider the inductive case $k > 0$.
	When $t \in T_k$, the first integral in \creflocal{eq:3} may be restricted to $[0,t_m+kt_1]$. 
	Expanding $x$ in the integrand \creflocal{eq:3} using the inductive hypothesis shows that \creflocal{eq:2} holds with
	\begin{align*}
		p_k(t) &= p_0(t) + \int_{0}^{t_m + k t_1} f_0(t, r) p_{k-1}(r) dr,
	\quad	f_k(t, s) = \int_{0}^{t_m + k t_1} f_0(t, r) f_{k-1}(r, s) dr, \\
		g_k(t, s) &= g_0(t, s) + \int_{0}^{t_m + k t_1} f_0(t, r) g_{k-1}(r, s) dr.
	\end{align*}
	It is easy verified that $p_k, f_k, g_k$ are locally bounded. 
	It suffices to prove their continuity properties.
	By the inductive hypothesis, the function
	\begin{align*}
		\RRge{0} &\rightarrow L^2([0, t_m + k t_1], \Endom(X)) : t \mapsto f_0(t, \cdot)
	\end{align*}
	is continuous, hence $p_k(t) - p_0(t)$ is continuous.
	Similarly,
	\begin{align*}
		[0, t_m] &\rightarrow L^2([0, t_m + k t_1], \Endom(X)) : s \mapsto f_{k-1}(\cdot, s), \\
		\RRge{0} &\rightarrow L^2([0, t_m + k t_1], \Homom(U, X)) : s \mapsto g_{k-1}(\cdot, s), 
	\end{align*}
	are continuous unless $s \in \{t_m - t_0, \ldots, t_m - t_m\}$, so $f_k(t, s)$ and $g_k(t, s) - g_0(t, s)$ are continuous away from $s \in \{t_m - t_0, \ldots, t_m - t_m\}$.
\end{proof}

For any $(u, x) \in C^{0}_{\local}(\RRge{0}, U) \times C^{0}_{\local}(\RRge{0}, X)$ solving \cref{eq:delayed} on $t > t_m$,
let $u_d[k] \in C^{0}([0,t_m], U)$ be the restriction of $u$ to the time interval $[kt_m, (k+1)t_m]$, and let $x_d[k] \in C^{0}([0,t_m], X)$ be analogously defined, then $(u_d, x_d)$ solves the discrete-time equation
%
\begin{align} \label{eq:delayed_discrete}
	x_d[k+1] &= P x_d[k] + Q u_d[k] + R u_d[k + 1],
\end{align}
where $P : C^{0}([0, t_m], X) \rightarrow C^{0}([0, t_m], X)$, $Q, R : C^{0}([0, t_m], U) \rightarrow C^{0}([0, t_m], X)$ are bounded operators defined by
\begin{align} \label{eq:delayed_discrete_operator}
	\begin{aligned}
		(Pv)(t) &= p(t_m + t) v(t_m) + \int_{0}^{t_m} f(t_m + t, s) v(s) ds, \\
		(Qv)(t) &= \int_{0}^{t_m} g(t_m + t, s) v(s) ds, \quad
		(Rv)(t) = \int_{0}^{t} g(t_m + t, s + t_m) v(s) ds,
	\end{aligned}
\end{align}
where $p, f, g$ are the functions defined in \cref{pro:delayed_solution}.
This connection allows us to prove the following characterization for the stability and VIVO property of $\mathcal{P}$.

\begin{theorem} \stepcounter{refer}
	\hspace{3mm} Let $\mathcal{P}$ be the $(U, X)$-relation defined by \cref{eq:delayed} and $\mathcal{P}_d$ be the $(C^{0}([0, t_m], U), \allowbreak C^{0}([0,t_m], X))$-relation defined by \cref{eq:delayed_discrete}.
	The following are equivalent
	\begin{enumerate}[(a)]
		\item\labellocal{item:a} If $(0, x_d) \in \mathcal{P}_d$ and $x_d[k+1] = \lambda x_d[k]$ for some $|\lambda| \geq 1$, then $x_d = 0$.
		\item\labellocal{item:b} $\mathcal{P}_d$ is stable.
		\item\labellocal{item:c} $\mathcal{P}_d$ is VIVO.
		\item\labellocal{item:d} $\mathcal{P}$ is VIVO.
		\item\labellocal{item:e} $\mathcal{P}$ is smoothly VIVO. 
		\item\labellocal{item:f} $\mathcal{P}$ is stable.
	\end{enumerate}
\end{theorem}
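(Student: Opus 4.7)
The plan is to close the cycle (a) $\Rightarrow$ (b) $\Rightarrow$ (c) $\Rightarrow$ (d) $\Rightarrow$ (e) $\Rightarrow$ (f) $\Rightarrow$ (a). The first three statements will be reduced to \cref{pro:transition_input_resilient} applied to a discrete-time abstract Cauchy problem whose generating operator is $P$ (compact), while the continuous-time statements will be linked to the discrete-time ones via time-sampling and mollification.

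The starting point is compactness of the operator $P : C^0([0, t_m], X) \to C^0([0, t_m], X)$ appearing in \cref{eq:delayed_discrete_operator}. Writing
\[
	(Pv)(t) = p(t_m + t)\, v(t_m) + \int_{0}^{t_m} f(t_m + t, s)\, v(s)\, ds,
\]
the first summand has finite rank (since $X$ is finite-dimensional) and the second is compact by Arzelà--Ascoli: \cref{pro:delayed_solution} guarantees that $f$ is bounded and continuous off a finite union of lines, so dominated convergence yields equicontinuity of $\{Pv : \lVert v\rVert_{C^0} \leq 1\}$. With $P$ compact, introducing the auxiliary input $\tilde{u}_d[k+1] := Q u_d[k] + R u_d[k+1]$ recasts $\mathcal{P}_d$ in the abstract Cauchy form $x_d[k+1] = P x_d[k] + \tilde{u}_d[k+1]$, to which \cref{pro:transition_input_resilient} applies: statement (a) matches the eigenvalue condition there, and the zero-input trajectories of $\mathcal{P}_d$ and of the recast system coincide, yielding (a) $\iff$ (b). For (b) $\iff$ (c), boundedness of $Q$ and $R$ makes $\tilde{u}_d$ vanish whenever $u_d$ vanishes in $C^0([0, t_m], U)$, so VIVO of the recast system transfers to (c), and (c) trivially implies (b) on setting $u_d = 0$.

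To connect $\mathcal{P}_d$ to $\mathcal{P}$, note that any $(u, x) \in \mathcal{P}$ produces $(u_d, x_d) \in \mathcal{P}_d$ with $u_d[k](s) = u(kt_m + s)$, and $u$ vanishes on $\RRge{0}$ if and only if $u_d$ vanishes in $C^0([0, t_m], U)$ (and similarly for $x$ and $x_d$). This gives (c) $\Rightarrow$ (d); the implication (d) $\Rightarrow$ (e) follows from closedness of $\mathcal{P}$ (\cref{pro:delayed_welldefined}(c)) via \cref{pro:relation_closed}; and (e) $\Rightarrow$ (f) is immediate on specializing to $u = 0$.

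The main obstacle is (f) $\Rightarrow$ (a), because a failure of (a) supplies only a continuous $x$ on $\RRge{0}$ with $x(t + t_m) = \lambda x(t)$, whereas $\mathcal{P}$ demands smoothness. I would remedy this by mollification: pick $\eta \in C^{\infty}_{c}((t_m, \infty), \RR)$ with $\int \eta = 1$ and set $\tilde{x}(t) := \int \eta(s) x(t + s)\, ds$. Then $\tilde{x} \in C^{\infty}_{\local}(\RRge{0}, X)$, and since $t + s > t_m$ for every $t \geq 0$ and $s \in \Supp \eta$, differentiating under the integral sign shows that $\tilde{x}$ satisfies the delay equation on $\RRge{t_m}$, so $(0, \tilde{x}) \in \mathcal{P}$ by \cref{pro:delayed_welldefined}(a). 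Linearity of the averaging gives $\tilde{x}(t + t_m) = \lambda \tilde{x}(t)$, hence $\tilde{x}(t + kt_m) = \lambda^k \tilde{x}(t)$, so $\tilde{x}$ is non-vanishing whenever it is non-zero. Concentrating $\eta$ near a point $s_0$ where $x(s_0) \neq 0$ (which exists since $x$ is continuous and nonzero) makes $\tilde{x}(0) \approx x(s_0) \neq 0$, contradicting stability of $\mathcal{P}$.
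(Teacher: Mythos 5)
Your proof is correct and follows the same overall route as the paper: recast $\mathcal{P}_d$ as a discrete abstract Cauchy problem with compact transition operator $P$ and auxiliary input $\tilde{u}_d$ to apply the compact-semigroup theorem, transfer to the continuous-time relation by the $t_m$-sampling correspondence, use closedness for (d)~$\Rightarrow$~(e), and close the cycle (f)~$\Rightarrow$~(a) by mollifying the scaled-periodic continuous trajectory. The one genuine divergence is in proving compactness of $P$ (the paper isolates this in \cref{lem:delayed_discrete_compact}): the paper approximates the kernel $f$ by a finite separable sum, while you argue via Arzel\`{a}--Ascoli, deducing equicontinuity of $\{P_2 v : \lVert v\rVert \le 1\}$ from $L^1$-continuity of $t \mapsto f(t_m+t,\cdot)$. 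Both are sound; yours is more economical, but the phrase ``dominated convergence yields equicontinuity'' elides a step --- dominated convergence only gives pointwise continuity of $t \mapsto f(t_m+t,\cdot)$ in $L^1$, and you then need compactness of $[0,t_m]$ to upgrade to uniform continuity, which is what delivers equicontinuity. The other differences are cosmetic: you invoke \cref{pro:transition_input_resilient} rather than \cref{pro:transition_io_input_resilient} (both apply since the recast system has identity $B,C$), you take the mollifier supported in $(t_m,\infty)$ rather than $(0,\infty)$ (more restrictive than necessary but still valid), and you spell out (e)~$\Rightarrow$~(f), which the paper leaves implicit.
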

\begin{proof}
	Proof of \reflocal{item:a} $\Leftrightarrow$ \reflocal{item:b} $\Rightarrow$ \reflocal{item:c}.
	$\mathcal{P}_d$ is the serial composition of the relation defined by $v_d[k+1] = Q u_d[k] + R u_d[k + 1]$ with input $u_d$, output $v_d$, and the relation defined by $x_d[k+1] = P x_d[k] + v_d[k+1]$ with input $v_d$, output $x_d$.
	By \cref{lem:delayed_discrete_compact} below, the operator $P$ is compact.
	The conclusions follow from applying \cref{pro:transition_io_input_resilient} to the latter relation.
	
	Proof of \reflocal{item:c} $\Rightarrow$ \reflocal{item:d}
	follows from the discussion above.
	
	Proof of \reflocal{item:d} $\Rightarrow$ \reflocal{item:e}
	follows from closedness of $\mathcal{P}$.
	
	Proof of \reflocal{item:e} $\Rightarrow$ \reflocal{item:a}.
	Assume that \reflocal{item:a} fails and let $x_d$ be a function violating \reflocal{item:a}.
	There is a unique $x \in C^{0}_{\local}(\RRge{0}, X)$ such that $x_d[k](t) = x(k t_m + t)$.
	By \cref{pro:delayed_solution}, $(0, x_d) \in \mathcal{P}_d$ implies that $(0, x)$ solves \cref{eq:delayed} on $t > t_m$.
	Fix $\eta \in C^{\infty}_{c}(\RRg{0}, \RR)$, the function $\tilde{x}$ defined by
	\begin{align*}
		\tilde{x}(t) = \int_{0}^{\infty} x(t+s) \eta(s) ds
	\end{align*}
	is smooth and satisfies $(0, \tilde{x}) \in \mathcal{P}$.
	Since $x$ is periodic up to the scaling factor $\lambda$, $\tilde{x}$ is non-vanishing as long as we pick $\eta$ such that $\tilde{x} \neq 0$.
\end{proof}

\begin{lemma} \label{lem:delayed_discrete_compact} \stepcounter{refer}
	The operator $P$ on $C^{0}([0, t_m], X)$ defined in \cref{eq:delayed_discrete_operator} is compact.
\end{lemma}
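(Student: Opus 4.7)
The plan is to verify compactness of $P$ via the Arzelà--Ascoli theorem after splitting $P = P_1 + P_2$, where
\[
	(P_1 v)(t) = p(t_m + t) v(t_m), \qquad (P_2 v)(t) = \int_{0}^{t_m} f(t_m + t, s) v(s) ds.
\]
The first summand is clearly finite-rank (hence compact): the map $v \mapsto v(t_m)$ is a continuous evaluation functional into the finite-dimensional space $X$, and the image of $P_1$ lies in the finite-dimensional subspace of $C^0([0, t_m], X)$ spanned by the coordinate functions $t \mapsto p(t_m + t) e_i$, where $\{e_i\}$ is a basis of $X$. The real content is the compactness of the integral operator $P_2$.

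For $P_2$, fix a bounded set $B$ with $\lVert v \rVert_{\infty} \leq M$ for all $v \in B$. Uniform boundedness of $P_2 B$ follows immediately from the local boundedness of $f$ on the compact rectangle $[t_m, 2t_m] \times [0, t_m]$. Equicontinuity of $P_2 B$ reduces to showing
\[
	\lim_{t_1 \to t_2} \int_{0}^{t_m} \lVert f(t_m + t_1, s) - f(t_m + t_2, s)\rVert ds = 0,
\]
uniformly in $t_2 \in [0, t_m]$. In other words, one wants the map $\phi : [0, t_m] \to L^1([0, t_m], \Endom(X))$, $t \mapsto f(t_m + t, \cdot)$, to be continuous; uniform continuity then comes for free from compactness of $[0, t_m]$.

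To establish continuity of $\phi$ at a fixed $t_2$, the key observation is that $f$ is continuous off the set $S$ described in \cref{pro:delayed_solution}, and for each pair $(t_1, t_2)$ the slice $\{s \in [0, t_m] \mid (t_m + t_1, s) \in S \text{ or } (t_m + t_2, s) \in S\}$ is a finite union of points, hence of measure zero. Thus for almost every $s \in [0, t_m]$, $f(t_m + t_1, s) \to f(t_m + t_2, s)$ as $t_1 \to t_2$, while the integrand stays uniformly dominated by the $L^{\infty}$-bound of $f$ on the compact rectangle. Lebesgue's dominated convergence theorem then delivers the desired $L^1$-convergence.

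The main (minor) obstacle is precisely this bookkeeping around the discontinuity set $S$: one must argue that the jumps of $f(t_m + t, \cdot)$ occur along a $t$-dependent but always finite collection of points in $[0, t_m]$, which is why pointwise a.e.\ convergence still holds and dominated convergence applies. Once this is in place, Arzelà--Ascoli finishes the proof.
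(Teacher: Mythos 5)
Your decomposition $P = P_1 + P_2$ and the treatment of $P_1$ match the paper exactly, but for $P_2$ you take a genuinely different route. You prove relative compactness of the image of a bounded set directly via Arzel\`{a}--Ascoli, reducing equicontinuity to continuity of the slice map $t \mapsto f(t_m + t, \cdot)$ into $L^1([0,t_m], \Endom(X))$, which you obtain from continuity of $f$ off $S$ (a closed set meeting each horizontal line in finitely many points) together with dominated convergence. The paper instead exhibits $P_2$ as an operator-norm limit of finite-rank integral operators: it first replaces $f$ by a globally continuous $f'$ agreeing with $f$ outside a thin neighbourhood of $S$ (small in the $\sup_t \lVert\cdot\rVert_{L^1_s}$ sense because $S$ is a finite union of non-vertical lines), then approximates $f'$ by a finite sum $\sum_j a_j(t) b_j(s)$. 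Both arguments hinge on the same two facts about $f$, and both are correct; yours is slightly more elementary, while the paper's additionally shows $P_2$ is a limit of finite-rank operators. One point you should make explicit: Arzel\`{a}--Ascoli for $X$-valued functions also requires pointwise relative compactness of the image in $X$, which here is automatic only because $X$ is finite-dimensional.
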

\begin{proof}
	Write $P = P_1 + P_2$ for $(P_1v)(t) = p(t_m+t)v(t_m)$ and $(P_2v)(t) = \int_{0}^{t_m} f(t_m + t, s) v(s) ds$.
	The operator $P_1$, which factors through $\CC^n$, is compact.
	It suffices to prove that $P_2$ is compact, by showing that it is the limit of finite-rank operators. 
	
	By the continuity property of $f$ stated in \cref{pro:delayed_solution}, for every $\epsilon > 0$, there exists (justified later) a function $h \in L^{\infty}([0, t_m] \times [0, t_m], \Endom(X))$ of the form
	\begin{align*} 
		h(t, s) &= \sum_{j=1}^{k} a_j(t) b_j(s),
	\end{align*}
	where $a_j \in C^{0}([0, t_m], \Endom(X))$ and $b_j \in L^{\infty}([0, t_m], \RR)$, 
	such that $h$ approximates $f$ in the sense
	\begin{align} \labellocal{eq:2}
		\sup_{t \in [0, t_m]} \lVert h(t, \cdot) - f(t + t_m, \cdot) \rVert_{L^1([0, t_m], \Endom(X))} \leq \epsilon.
	\end{align}
	This implies that
	\begin{align*}
		\left\lvert \int_{0}^{t_m} \left(h(t, s) - f(t_m + t, s)\right) v(s) ds\right\rvert &\leq \epsilon \lVert v \rVert_{L^{\infty}}
	\end{align*}
	for all $t \in [0, t_m]$, hence $P_2$ is approximated by the integral operator $(Hv)(t) = \int_{0}^{t} h(t,s) v(s) ds$.
	Since the rank of $H$ is at most $k \dim X < \infty$, $P_2$ is a limit of finite-rank operators, hence is compact.
	
	Proof of the existence of $h$.
	We first consider the case $f$ is continuous on $[t_m, 2t_m] \times [0, t_m]$.
	In this case, pick points $0 = s_0 < s_1 < \dots < s_k = t_m$ on the interval $[0, t_m]$;
	partition $[t_m, 2t_m] \times (0, t_m]$ into strips $[t_m, 2t_m] \times (s_{j-1}, s_j]$;
	and approximate $f$ on each strip by $f(t, s_j)$.
	In other words, we set $h(t, s) = \sum_{j=1}^{k} f(t_m + t, s_j) \ind{(s_{j-1}, s_j]}(s)$, then \creflocal{eq:2} holds as long as the strips are sufficiently fine.
	
	In general, $f$ may be discontinuous, but it is only discontinuous on the set $S$ defined in \cref{pro:delayed_solution}.
	For every open set $V \supseteq S$, there is a function $f' \in C^{0}([t_m, 2t_m] \times [0, t_m], \Endom(X))$ such that $f' = f$ on the complement of $V$ and $\lVert f' \rVert_{L^{\infty}}$ is bounded by $\lVert f \rVert_{L^{\infty}([t_m, 2t_m] \times [0, t_m])}$.
	Since $S$ is a finite union of lines, none of which parallel to the second coordinate axis of $[t_m, 2t_m] \times [0, t_m]$, choosing $V$ sufficiently small ensures that
	\begin{align*}
		\sup_{t \in [t_m, 2t_m]} \lVert f'(t, \cdot) - f(t, \cdot) \rVert_{L^1([0, t_m])} \leq \epsilon / 2.
	\end{align*}
	Therefore, any $h$ approximating $f'$ is our desired function.
\end{proof}

\section{Conclusion}

We have presented an input-output framework for the analysis of networks of infinite-dimensional systems.
Under mild assumptions, we derived a synchronization criterion, which can be seen as a generalization of an existing finite-dimensional result.
We have shown that these conditions can be verified for classes of abstract Cauchy problems, parabolic equations, and time-delay differential equations. A direction of future research is the application of the proposed input-output framework for control design.

\begin{shownto}{arXiv}

\section{Appendix} \label{sect:appendix}

\subsection{Sobolev spaces} \label{sect:sobolevspaces}

This section serves as a quick introduction to Sobolev spaces and a clarification of terminology to avoid possible ambiguities.
Acquaintance with distributions, tempered distributions, and Fourier transform is assumed.
The latter topics are well explained in many functional analysis or PDE books (e.g. \cite[Sec. 6, 7]{rudin91}).

For any $s \in \RR$, the Sobolev space $H^s(\RR^d)$ consists of tempered distribution $f$ whose Fourier transform $\hat{f}$ is equal to $(1 + \|\cdot\|^2)^{-s/2} \hat{f}_s(\cdot)$ for some $\hat{f}_s \in L^2(\RR^d)$. 
With the norm $\|f\|_{H^2(\RR^d)} = \|\hat{f}_s\|_{L^2}$, $H^s(\RR^d)$ is a Banach space.
By the properties of Fourier transform, (distributional) derivative is a bounded operator $H^{s}(\RR^d) \rightarrow H^{s-1}(\RR^d)$, and when $s \in \ZZge{0}$, $H^s(\RR^d)$ coincides with $L^2$ functions having $L^2$ derivatives of order up to $s$.
Since the Schwartz space $\mathcal{S}(\RR^d)$ is dense in $L^2$, its inverse Fourier transform (also $\mathcal{S}$) is dense in $H^s(\RR^d)$.
Therefore, the usual integral pairing $\mathcal{S} \times \mathcal{S} \rightarrow \CC : (f, g) \mapsto \int f(x) g(x) dV = \int \hat{f}(-\xi) \hat{g}(\xi) dV$ extends uniquely to a continuous pairing $H^s \times H^{-s} \rightarrow \CC$, which often is also denoted by $\int f g dV$ even though the integrand is not a measurable function.
Under this pairing, $H^{-s}$ is the dual of $H^s$.
Since $C^{\infty}_{c}(\RR^d)$ is dense in $\mathcal{S}(\RR^d)$, the former is also dense in $H^s(\RR^d)$, so operations on $C^{\infty}_c(\RR^d)$ can be extended to $H^s(\RR^d)$ by continuity.
For example, the integration by part formula $\int f \partial_1 g dV = - \int g \partial_1 f dV$ holds for all $f, g \in C^{\infty}_{c}(\RR^d)$, hence it also holds for $f \in H^{s}$ and $g \in H^{1-s}$.
Three essential results of Sobolev spaces are

\begin{lemma}[Sobolev inequality] \label{lem:sobolev_inequality}
	If $s > d/2$, then $H^s(\RR^d) \subseteq C^0(\RR^d)$ (both as subsets of distributions) and $\|f\|_{L^{\infty}(\RR^d)} \lesssim \|f\|_{H^s(\RR^d)}$ uniform in $f \in H^s(\RR^d)$.
\end{lemma}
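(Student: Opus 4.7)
The plan is to use the Fourier transform representation of $H^s(\RR^d)$ directly, exploiting the fact that $s > d/2$ makes a natural weight function square-integrable. Given $f \in H^s(\RR^d)$, by definition $(1+|\xi|^2)^{s/2}\hat{f}(\xi) \in L^2(\RR^d)$, and its $L^2$ norm equals $\|f\|_{H^s}$. I will first show that $\hat{f} \in L^1(\RR^d)$ by splitting
\[
|\hat{f}(\xi)| = (1+|\xi|^2)^{-s/2} \cdot (1+|\xi|^2)^{s/2}|\hat{f}(\xi)|
\]
and applying Cauchy--Schwarz in $L^2(\RR^d)$ to obtain
\[
\|\hat{f}\|_{L^1} \leq \left(\int_{\RR^d} (1+|\xi|^2)^{-s} d\xi\right)^{1/2} \|f\|_{H^s}.
\]
The weight integral is finite precisely when $2s > d$, which can be checked quickly by passing to polar coordinates: the integrand is comparable to $|\xi|^{-2s}$ for large $|\xi|$, giving a convergent radial integral $\int_1^{\infty} r^{d-1-2s}\, dr$ exactly when $s > d/2$. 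This gives a constant $C_s$ (depending only on $s$ and $d$) such that $\|\hat{f}\|_{L^1} \leq C_s \|f\|_{H^s}$.

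Having established $\hat{f} \in L^1$, the next step is to identify $f$, a priori only a tempered distribution, with an honest continuous bounded function. The inverse Fourier transform of an $L^1$ function is continuous and bounded by the Riemann--Lebesgue lemma together with the dominated convergence theorem, and its $L^{\infty}$ norm is controlled by $(2\pi)^{-d}\|\hat{f}\|_{L^1}$ (up to the convention used). Since the Fourier inversion formula holds for tempered distributions, $f$ equals this inverse Fourier transform as a tempered distribution, hence agrees (as a distribution) with a continuous bounded function. Combining the two estimates yields
\[
\|f\|_{L^{\infty}} \lesssim \|\hat{f}\|_{L^1} \lesssim \|f\|_{H^s},
\]
which is the desired inequality, and simultaneously establishes $H^s(\RR^d) \subseteq C^0(\RR^d)$ as subspaces of distributions.

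There is no real obstacle here; the statement is classical and the proof is essentially a single application of Cauchy--Schwarz plus Fourier inversion. The only subtlety worth flagging is the identification of the distribution $f$ with a continuous representative, which requires knowing that the inverse Fourier transform of an $L^1$ function is continuous (this is where boundedness rather than mere measurability is crucial) and that Fourier inversion commutes with the distributional pairing, so that the equivalence class of $f$ as a distribution contains this continuous representative uniquely.
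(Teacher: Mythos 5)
Your proof is correct and follows essentially the same route as the paper's: factor $\hat{f} = (1+\|\cdot\|^2)^{-s/2}\hat{f}_s$, apply Cauchy--Schwarz to get $\|\hat f\|_{L^1} \lesssim \|f\|_{H^s}$ (using $2s>d$ for integrability of the weight), and conclude via $\|f\|_{L^\infty} \le \|\hat f\|_{L^1}$ and continuity of the inverse Fourier transform of an $L^1$ function. The only cosmetic difference is that the paper argues for $f \in C^\infty_c$ and invokes density, whereas you work directly with a general $f \in H^s$ and are slightly more explicit about identifying the distribution with its continuous representative.
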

\begin{proof}
	For any $f \in C^{\infty}_c(\RR^d)$, write $\hat{f} = (1 + \|\cdot\|^2)^{-s/2} \hat{f}_s$, then
	\begin{align*}
		\|f\|_{L^{\infty}} &\leq \|\hat{f}\|_{L^1} \leq \left\|(1 + \|\cdot\|^2)^{-s/2} \right\|_{L^2} \|\hat{f}_s\|_{L^2} \lesssim \|f\|_{H^s}
	\end{align*}
	uniform in $f \in C^{\infty}_c(\RR^d)$.
	Furthermore, $f$ is continuous because it is the Fourier transforms of an $L^1$ function.
\end{proof}

\begin{lemma}[Trace inequality] \label{lem:trace}
	If $s > 0$, then $\|f(0, \cdot)\|_{H^s(\RR^d)} \lesssim \|f\|_{H^{s+1/2}(\RR \times \RR^d)}$ uniform in $f \in C^{\infty}_{c}(\RR \times \RR^d)$, where $f(0, \cdot)$ denotes the function on $\RR^d$ obtained from $f$ by restricting the first coordinate to $0$.
	In particular, the restriction map extends uniquely to a bounded operator $H^{s+1/2}(\RR \times \RR^d) \rightarrow H^s(\RR^d)$, called the trace operator.
\end{lemma}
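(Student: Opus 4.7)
The plan is to prove the inequality on the dense subspace $C^{\infty}_{c}(\RR \times \RR^d)$ using the Fourier-analytic definition of the Sobolev norms. Writing coordinates as $(x_0, x') \in \RR \times \RR^d$ and setting $g(x') := f(0, x')$, the first step is to relate $\hat g$ to $\hat f$ via partial Fourier inversion in the $x_0$-variable: one obtains
\begin{equation*}
\hat g(\xi') \;=\; \frac{1}{2\pi} \int_{\RR} \hat f(\xi_0, \xi')\, d\xi_0.
\end{equation*}
This reduces the problem to a weighted $L^2$-estimate on $\hat f$.

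Next, apply Cauchy--Schwarz to the $\xi_0$-integral after inserting the weight $(1+|\xi_0|^2+|\xi'|^2)^{\pm (s+1/2)/2}$:
\begin{equation*}
|\hat g(\xi')|^2 \;\lesssim\; \left(\int_{\RR} (1+|\xi_0|^2+|\xi'|^2)^{-(s+1/2)} d\xi_0 \right) \left(\int_{\RR} (1+|\xi_0|^2+|\xi'|^2)^{s+1/2} |\hat f(\xi_0, \xi')|^2 d\xi_0\right).
\end{equation*}
The first factor is the one that needs work: by the change of variable $\xi_0 = \sqrt{1+|\xi'|^2}\, u$ it equals
\begin{equation*}
(1+|\xi'|^2)^{-s} \int_{\RR} (1+u^2)^{-(s+1/2)} du,
\end{equation*}
and the one-dimensional integral converges exactly when $2(s+1/2) > 1$, i.e.\ when $s > 0$. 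This is the precise place where the hypothesis $s > 0$ enters, and it is really the only subtle point in the proof.

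Multiplying through by $(1+|\xi'|^2)^{s}$ and integrating over $\xi' \in \RR^d$ then yields
\begin{equation*}
\|g\|_{H^s(\RR^d)}^2 \;\lesssim\; \int_{\RR^{d+1}} (1+|\xi_0|^2+|\xi'|^2)^{s+1/2} |\hat f(\xi_0, \xi')|^2 d\xi_0\, d\xi' \;=\; \|f\|_{H^{s+1/2}(\RR\times\RR^d)}^2
\end{equation*}
by Fubini. This is the desired estimate on $C^{\infty}_{c}$, and the extension to a bounded trace operator $H^{s+1/2}(\RR \times \RR^d) \to H^s(\RR^d)$ follows by density of $C^{\infty}_{c}$ (as recalled in the paragraph preceding the lemma) together with the standard bounded linear extension principle. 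The only real obstacle in the argument is the weight computation above; the rest is routine.
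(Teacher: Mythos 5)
Your proof is correct and follows essentially the same route as the paper's: both express the Fourier transform of the trace as $\int_{\RR}\hat f(\xi_0,\cdot)\,d\xi_0$, apply Cauchy--Schwarz with the weight $(1+|\xi_0|^2+|\xi'|^2)^{\pm(s+1/2)/2}$, and use that the resulting one-dimensional integral converges precisely because $s>0$, contributing the factor $(1+|\xi'|^2)^{-s}$. Your explicit change of variables $\xi_0=\sqrt{1+|\xi'|^2}\,u$ just spells out the scaling step the paper leaves implicit.
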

\begin{proof}
	Note that the Fourier transform of $f(0, \cdot)$ is $\int_{\RR} \hat{f}(\xi, \cdot) d\xi$.
	Applying H\"{o}lder inequality to $\hat{f}(\xi, \eta) =  (1 + \xi^2 + \eta^2)^{-\frac{s+1/2}{2}} \hat{f}_{s + 1/2}(\xi, \eta)$, we get 
	\begin{align*}
		\left\lvert \int \hat{f}(\xi, \eta) d\xi \right\rvert^2 &\lesssim (1+\eta^2)^{-s} \int \lvert \hat{f}_{s+1/2}(\xi, \eta) \rvert^2 d\xi
	\end{align*}
	uniform in $\eta \in \RR^d$ and $f$.
	Multiplying by $(1+\eta^2)^{s}$ and integrating over $\eta$ yields the desired conclusion.
\end{proof}

\begin{lemma}[Compact embedding] \label{lem:sobolev_compact}
	Let $K \subseteq \RR^d$ be a compact subset and $s > t$.
	Let $S$ be the set of $f \in H^s(\RR^d)$ of norm at most $1$ and supported in $K$, then $S$ is precompact in $H^t(\RR^d)$.
\end{lemma}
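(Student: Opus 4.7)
The plan is to establish the classical Rellich--Kondrachov compactness via a Fourier-side Arzel\`{a}--Ascoli argument, truncating in frequency to exploit the extra decay that $s > t$ provides. Let $\{f_n\} \subseteq S$ be an arbitrary sequence; it suffices to produce a subsequence converging in $H^t(\RR^d)$.

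First I would fix a cutoff $\chi \in C^{\infty}_{c}(\RR^d)$ with $\chi \equiv 1$ on a neighbourhood of $K$, so that $f_n = \chi f_n$ for every $n$. On the Fourier side this gives $\hat{f}_n = (2\pi)^{-d/2}\, \hat{\chi} * \hat{f}_n$. Since $\hat{\chi} \in \mathcal{S}(\RR^d)$ and $\{\hat{f}_n\}$ is bounded in the weighted space $L^2((1+|\xi|^2)^{s} d\xi)$ by the hypothesis $\|f_n\|_{H^s} \leq 1$, a Cauchy--Schwarz estimate against $\hat{\chi}(\xi - \cdot)$ (using the rapid decay of $\hat{\chi}$) shows that $\hat{f}_n$ is uniformly bounded pointwise on $\RR^d$. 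Applying the same trick to $\partial_{\xi_j} \hat{f}_n = (2\pi)^{-d/2}\, (\partial_{\xi_j}\hat{\chi}) * \hat{f}_n$ shows that $\{\hat{f}_n\}$ is also uniformly Lipschitz on $\RR^d$, hence equicontinuous.

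Next I would invoke Arzel\`{a}--Ascoli on an exhaustion of $\RR^d$ by balls and extract via a diagonal argument a subsequence, still denoted $\{\hat{f}_n\}$, that converges uniformly on every compact subset of $\RR^d$. To upgrade this to convergence in $H^t$, split
\begin{align*}
\|f_n - f_m\|_{H^t}^2 &= \int_{|\xi| \leq R} (1+|\xi|^2)^{t} |\hat{f}_n(\xi) - \hat{f}_m(\xi)|^2 d\xi \\
&\quad + \int_{|\xi| > R} (1+|\xi|^2)^{t-s} (1+|\xi|^2)^{s} |\hat{f}_n(\xi) - \hat{f}_m(\xi)|^2 d\xi.
\end{align*}
The tail integral is bounded by $(1+R^2)^{t-s} \|f_n - f_m\|_{H^s}^2 \leq 4(1+R^2)^{t-s}$, which can be made smaller than $\varepsilon/2$ by choosing $R$ large, since $t - s < 0$. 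With $R$ fixed, the head integral is bounded by $(1+R^2)^{t} \cdot \mathrm{vol}(B_R) \cdot \sup_{|\xi|\leq R}|\hat{f}_n(\xi) - \hat{f}_m(\xi)|^2$, which tends to $0$ by uniform convergence on $\overline{B_R}$. A standard $\varepsilon/2$ argument then shows $\{f_n\}$ is Cauchy in $H^t(\RR^d)$, and completeness of $H^t$ gives a limit.

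No step is a serious obstacle: the only points requiring care are the quantitative estimates on $\hat{f}_n$ and its gradient (ensuring the constants depend only on $\chi$ and the $H^s$-bound, not on $n$), and the bookkeeping in the frequency-truncation estimate. Once these are in place, the argument is purely mechanical.
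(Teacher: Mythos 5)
Your proof is correct and follows essentially the same route as the paper's: both exploit the compact support via the convolution identity $\hat f = c\,\hat\chi * \hat f$ to get local boundedness and equicontinuity of the Fourier transforms, apply Arzel\`a--Ascoli, and control the high-frequency tail by the factor $(1+R^2)^{t-s}$. The only (harmless) overstatement is the claim of uniform boundedness and a uniform Lipschitz constant on all of $\RR^d$ --- when $s<0$ the Cauchy--Schwarz constant grows with $|\xi|$ --- but your argument only uses these bounds on compact balls, where they do hold.
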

\begin{proof}
	It suffices to show that for any $\epsilon > 0$, there is a finite subset of $H^t$ whose $\epsilon$-neighbourhood contains $S$.
	For any $f \in S$, write $\hat{f} = (1 + \|\cdot\|^2)^{-s/2} \hat{f}_s$ and $\hat{f} = (1 + \|\cdot\|^2)^{-t/2} \hat{f}_t$, then $\hat{f}_t = (1 + \|\cdot\|^2)^{(t-s)/2} \hat{f}_s$.
	Since $\|\hat{f}_s\|_{L^2} \leq 1$, we have $\|\hat{f}_t\|_{L^2(\RR^d \setminus B_{r})} \leq (1 + r^2)^{(t-s)/2}$ where $B_r$ denotes the solid ball of radius $r > 0$.
	Therefore, it suffices to show that $\{ \hat{f}_t \in L^2(B_r) \mid f \in S\}$ is precompact in $L^2(B_r)$.
	
	Since $f \in S$ is compact supported, there is $\eta \in C^{\infty}_{c}(\RR^d)$ such that $f = f \eta$.
	Therefore, $\hat{f} = \hat{f} * \hat{\eta}$ and $\hat{f} \in C^{\infty}_{\local}(\RR^d)$.
	The convolution writes $\hat{f}(\xi) = \langle \hat{f}_s, (1 + \|\cdot\|^2)^{-s/2} \hat{\eta}(\xi - \cdot) \rangle$, we see that $\sup_{f \in S} \|\hat{f}\|_{L^{\infty}(B_r)} < \infty$.
	Similarly, $\partial_j \hat{f} = \hat{f} * \partial_j \hat{\eta}$ implies that $\sup_{f \in S} \|\partial_j \hat{f}\|_{L^{\infty}(B_r)} < \infty$.
	By Arzel\`{a}-Ascoli, $\{ \hat{f}_t \mid f \in S\}$ is precompact in $C^0(B_r)$, and also $L^2(B_r)$.
\end{proof}

For any subset $U \subseteq \RR^d$, $H^s(U)$ denotes the set of distribution (i.e., linear functional $C^{\infty}_{c}(U^\circ) \rightarrow \CC$) obtained as restrictions of $H^s(\RR^d)$ to $U$.
$H^s(U)$ is a Banach space equipped with the norm $\|f\|_{H^s(U)} = \inf \{\|g\|_{H^s(\RR^d)} \mid g = f \text{ on } U\}$.
Clearly, $C^{\infty}_{c}(U)$ is dense in $H^s(U)$ when $U$ is closed.
In general, the closure of $C^{\infty}_{c}(U)$ in $H^s(U)$ is denoted by $H^s_0(U)$.
It is worth noting (for conceptual clarity only) that when $s \in \ZZge{0}$ and $U$ has Lipschitz boundary, $H^s(U)$ coincides with the set of $L^2(U)$-functions whose distributional derivatives of order up to $s$ belong to $L^2(U)$ (by the extension theorem \cite[Chpt. IV, Sec. 3]{stein70}).
For a compact manifold $M$, $H^s(M)$ denotes the set of distribution $f$ such that on any coordinate chart $(U, \phi)$ and compact subset $K \subseteq U$, we have $f \circ \phi^{-1} \in H^s(\phi(K))$.
All results stated for $H^s(\RR^d)$ have obvious analogues for $H^s(U)$ and $H^s(M)$.

For a compact manifold $M$ and $r < s < t$, we have compact inclusions $H^{r}(M) \hookrightarrow H^{s}(M) \hookrightarrow H^{t}(M)$.
The following simple result turns out to be very helpful in many situations.

\begin{lemma}[Ehrling interpolation] \label{lem:ehrling} \stepcounter{refer}
	Assume $X \hookrightarrow Y \hookrightarrow Z$ are continuous injections of Banach spaces such that $X \hookrightarrow Y$ is compact.
	For every $\epsilon > 0$, there exists $\alpha > 0$ such that $\lVert x \rVert_{Y} \leq \epsilon \lVert x \rVert_{X} + \alpha \lVert x \rVert_{Z}$ for all $x \in X$.
\end{lemma}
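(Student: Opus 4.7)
The plan is to argue by contradiction in the style of the classical Ehrling/Lions--Peetre trick. Fix $\epsilon > 0$ and suppose, toward a contradiction, that no constant $\alpha$ works. Then for every $n \in \NN$ I can pick $x_n \in X \setminus \{0\}$ with
\begin{align*}
    \lVert x_n \rVert_{Y} > \epsilon \lVert x_n \rVert_{X} + n \lVert x_n \rVert_{Z}.
\end{align*}
Normalizing, I set $\tilde{x}_n := x_n / \lVert x_n \rVert_{Y}$, so that $\lVert \tilde{x}_n \rVert_{Y} = 1$. The inequality above gives $\epsilon \lVert \tilde{x}_n \rVert_{X} < 1$ and $n \lVert \tilde{x}_n \rVert_{Z} < 1$, i.e.\ the sequence is bounded in $X$ by $1/\epsilon$ and vanishes in $Z$ at rate $1/n$.

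Next I invoke compactness of the inclusion $X \hookrightarrow Y$: since $(\tilde{x}_n)$ is bounded in $X$, a subsequence (still denoted $\tilde{x}_n$) converges in $Y$ to some $y \in Y$. By continuity of the norm, $\lVert y \rVert_{Y} = 1$. On the other hand, $\tilde{x}_n \to 0$ in $Z$ because $\lVert \tilde{x}_n \rVert_{Z} \to 0$. The continuous injection $Y \hookrightarrow Z$ forces $\tilde{x}_n \to y$ in $Z$ as well, so $y = 0$ as an element of $Z$. Injectivity of $Y \hookrightarrow Z$ then gives $y = 0$ in $Y$, contradicting $\lVert y \rVert_{Y} = 1$.

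The argument is essentially textbook, so I do not anticipate a real obstacle; the only point to be careful about is that the hypothesis ``continuous injections'' is used twice in distinct ways: continuity of $Y \hookrightarrow Z$ transfers the $Y$-limit to a $Z$-limit, and injectivity of $Y \hookrightarrow Z$ rules out the possibility that a nonzero $y \in Y$ maps to $0$ in $Z$. Without injectivity one would only conclude that $y$ lies in the kernel of $Y \to Z$, which need not be trivial.
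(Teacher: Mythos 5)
Your proof is correct and is the standard textbook argument for Ehrling's lemma (contradiction, normalization in $Y$, compactness of $X \hookrightarrow Y$ to extract a $Y$-convergent subsequence, continuity of $Y \hookrightarrow Z$ to identify the limit as $0$, injectivity to conclude). The paper in fact states this lemma without any proof, treating it as classical, so there is no paper proof to compare against; your argument is precisely the one the authors are implicitly invoking. One small point worth making explicit, which you handled implicitly: the normalization $\tilde{x}_n := x_n/\lVert x_n\rVert_Y$ is legitimate because the assumed strict inequality forces $\lVert x_n\rVert_Y > 0$ (otherwise the right-hand side, being nonnegative, could not be strictly smaller). Your closing remark about the two distinct uses of the hypothesis (continuity versus injectivity of $Y \hookrightarrow Z$) is a useful observation and correctly identifies where each is needed.
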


\subsection{Technical Lemmas}
This section collects some purely technical lemmas. These results are well-known, but their proofs are not always easily accessible. 
The proofs are included for the readers' convenience.

\begin{lemma}[Borel's lemma] \label{lem:borel} \stepcounter{refer}
	For any sequence $\{a_k\}_{k \in \ZZge{0}}$ of elements in a Banach space $X$, there exists a function $f \in C^{\infty}(\RR, X)$ such that $\partial_t^{k} f(0) = a_k$ for every $k \in \ZZge{0}$.
\end{lemma}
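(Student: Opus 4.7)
The plan is to mimic the classical scalar Borel construction, constructing $f$ as an infinite sum of terms, each engineered to contribute exactly one of the prescribed derivatives at $0$ while being made arbitrarily small in every $C^j$-norm by a rescaling trick. The Banach-valued setting introduces no essential difficulty since all estimates are of the coefficients $\|a_k\|$ multiplied by scalar functions.

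First I would fix a cut-off function $\phi \in C^{\infty}_{c}(\RR)$ with $\phi \equiv 1$ on a neighbourhood of $0$ and $\operatorname{supp}\phi \subseteq [-1,1]$, then, given a sequence of positive scalars $\lambda_k \geq 1$ to be chosen, define
\begin{equation*}
    f_k(t) := \frac{a_k}{k!}\, t^k\, \phi(\lambda_k t), \qquad k \in \ZZge{0}.
\end{equation*}
Because $\phi$ is constant near $0$, every derivative of $\phi(\lambda_k \cdot)$ at $0$ of positive order vanishes, so $\partial_t^j f_k(0) = \delta_{jk} a_k$ by the Leibniz rule.

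Next I would estimate $\|f_k\|_{C^j(\RR, X)}$ for $j < k$. Since $\phi(\lambda_k t)$ is supported in $|t| \leq \lambda_k^{-1}$, on this support $|t|^{k-i} \leq \lambda_k^{-(k-i)}$, and each derivative of $\phi(\lambda_k t)$ carries a factor $\lambda_k^{j-i}$. Applying Leibniz to $t^k \phi(\lambda_k t)$ gives the uniform bound
\begin{equation*}
    \|f_k\|_{C^j(\RR, X)} \;\leq\; \frac{\|a_k\|}{k!}\, C_{j,k}\, \lambda_k^{j-k},
\end{equation*}
where $C_{j,k}$ depends only on $j$, $k$, and the norms $\|\phi^{(i)}\|_\infty$. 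The key point is the negative exponent $j - k < 0$, so the bound can be driven below $2^{-k}$ by taking $\lambda_k$ sufficiently large. I would therefore choose $\lambda_k$ so that $\|f_k\|_{C^j} \leq 2^{-k}$ for every $j = 0, 1, \ldots, k-1$ simultaneously, which is achievable since only finitely many conditions are imposed per $k$.

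Finally I would set $f := \sum_{k=0}^{\infty} f_k$. For any fixed $j$, the tail $\sum_{k \geq j+1} f_k$ converges absolutely in $C^j(\RR, X)$ by construction, and the head is a finite sum of smooth functions, so $f \in C^j(\RR, X)$ for every $j$, that is, $f \in C^{\infty}(\RR, X)$. Term-by-term differentiation at $0$ is legitimate by the $C^j$-convergence, so
\begin{equation*}
    \partial_t^j f(0) \;=\; \sum_{k=0}^{\infty} \partial_t^j f_k(0) \;=\; \sum_{k=0}^{\infty} \delta_{jk} a_k \;=\; a_j,
\end{equation*}
completing the proof. The only delicate point is balancing the choice of $\lambda_k$ against both $\|a_k\|$ and the combinatorial constants $C_{j,k}$; this is routine once one realises that for each $k$ there are only finitely many indices $j < k$ to worry about, so a single sufficiently large $\lambda_k$ handles all of them.
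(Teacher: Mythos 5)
Your proposal is correct and follows essentially the same construction as the paper: both build $f=\sum_k f_k$ with $f_k(t)=\frac{a_k}{k!}t^k$ times a rescaled cut-off (your $\phi(\lambda_k t)$ is the paper's $\eta(t/b_k)$ with $b_k=1/\lambda_k$), verify $\partial_t^j f_k(0)=\delta_{jk}a_k$, and choose the scaling so that the tails converge in every $C^j$-norm. The only differences are notational.
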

\begin{proof}
	Fix any $\eta \in C^{\infty}(\RR, X)$ such that $\eta(t) = 0$ when $\lvert t \rvert \geq 1$ and $\eta(t) = 1$ when $\lvert t \rvert \leq 1/2$.
	Define functions
	\begin{align*}
		f_k(t) &= a_k \cdot \frac{t^k}{k!} \cdot \eta\left(\frac{t}{b_k}\right)
	\end{align*}
	for parameters $b_k > 0$ to be determined.
	Clearly, $\partial_t^j f_k(0) = a_k$ if $j = k$ and $\partial_t^j f_k(0) = 0$ otherwise.
	If $\sum_{k \in \ZZge{0}} f_k$ converges in $C^{\infty}(\RR, X)$, then the sum is the desired $f$.
	
	To prove the convergence, it suffices to show that 
	\begin{align*}
		\sum_{j \in \ZZg{k}} \lVert  \partial_t^{k} f_j \rVert_{L^{\infty}} &< \infty
	\end{align*}
	for every $k \in \ZZge{0}$.
	When $j \geq k$,
	\begin{align*}
		\partial_t^{k} f_j(t)  &= a_j \sum_{i=0}^{k} {k \choose i} \frac{t^{j+i-k}}{(j+i-k)!} \cdot \frac{1}{b_j^i} \cdot \eta^{(i)}\left(\frac{t}{b_j}\right)
	\end{align*}
	where $\eta^{(i)}$ is the $i$th derivative of $\eta$.
	Because $\eta^{(i)}(t)$ is identically zero outside of $\lvert t \rvert < 1$, we have the bound
	\begin{align*}
		\lVert  \partial_t^{k} f_j \rVert_{L^{\infty}} &\leq \lVert a_j\rVert_{X} \sum_{i=0}^{k} {k \choose i} \frac{b_j^{j-k}}{(j+i-k)!} \cdot \lVert \eta^{(i)}\rVert_{L^{\infty}} \\
		&\leq b_j^{j-k} \lVert a_j\rVert_{X} \cdot 2^k \lVert \eta\rVert_{C^{k}}.
	\end{align*} 
	Picking $b_j$ such that $b_j \lVert a_j\rVert_{X} \leq 1$ and $b_j \leq 1/2$, then
	\begin{align*}
		\lVert  \partial_t^{k} f_j \rVert_{L^{\infty}} &\leq \frac{2^k \lVert \eta\rVert_{C^k}}{2^{j-k-1}}
	\end{align*}
	for all $j > k$.
	This yields the bound
	\begin{align*}
		\sum_{j \in \ZZg{k}} \lVert  \partial_t^{k} f_j \rVert_{L^{\infty}} &\leq 2^{k+1} \lVert \eta\rVert_{C^k}
	\end{align*}
	for every $k \in \ZZge{0}$, completing the proof.
\end{proof}

\begin{lemma}[Local elliptic regularity] \label{lem:elliptic_regularity_local_0} \stepcounter{refer}
	Given $\Omega = \RRge{0} \times \RR^{d-1}$ and a uniformly positive-definite $a \in C^{\infty}_{\local}(\Omega, \CC^{d \times d})$,
	denote by $S$ the set of all $(u, f, h) \in H^1_0(\Omega^{\circ}) \times L^2(\Omega) \times H^1(\Omega, \CC^d)$ solving
	\begin{align} \labellocal{eq:1} 
		-\int_{\Omega} (a \gradient u) \cdot (\gradient v) dV &= \int_{\Omega} f v + g \cdot \gradient v dV
	\end{align}
	for all $v \in C^{\infty}_{c}(\Omega^{\circ})$.
	For every $k \in \ZZge{0}$,
	\begin{align*}
		\lVert u \rVert_{H^{k+2}} &\lesssim \lVert u \rVert_{H^{k+1}} + \lVert f \rVert_{H^{k}} + \|g\|_{H^{k+1}}
	\end{align*}
	uniform in $(u, f, g) \in S$.
\end{lemma}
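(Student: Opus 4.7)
The plan is the classical tangential difference-quotient method combined with an algebraic recovery of the normal second derivative from the equation, iterated by induction on $k$. Because $\Omega = \RRge{0} \times \RR^{d-1}$ has boundary $\{0\} \times \RR^{d-1}$, the coordinates $x_2, \ldots, x_d$ are tangential, and for any $j \in \{2, \ldots, d\}$ and $h \in \RR$ the translate $u(\cdot + h e_j)$ remains in $H^1_0(\Omega^{\circ})$, hence so does the difference quotient $D_j^h u := (u(\cdot + h e_j) - u)/h$. In the base case $k = 0$, I would apply $D_j^{-h}$ to the weak equation and test against $D_j^h u$ (with a compactly supported cutoff if needed, shifting difference quotients onto the coefficients via summation-by-parts). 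The uniform positive-definiteness of $a$ together with the standard bound $\lVert D_j^h w\rVert_{L^2} \lesssim \lVert \partial_j w\rVert_{L^2}$ yields $\lVert \gradient D_j^h u\rVert_{L^2} \lesssim \lVert u\rVert_{H^1} + \lVert f\rVert_{L^2} + \lVert g\rVert_{H^1}$ uniformly in $h$. Sending $h \to 0$ gives $\partial_j u \in H^1$ for every tangential $j$, so every mixed second derivative $\partial_i \partial_j u$ with at least one tangential index belongs to $L^2$.

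To recover $\partial_1^2 u \in L^2$, I would expand the distributional identity $-\divergence(a \gradient u) = f - \divergence g$ as
\[
-a_{11} \partial_1^2 u = f - \divergence g + \sum_{i,j}(\partial_i a_{ij})\partial_j u + \sum_{(i,j)\neq (1,1)} a_{ij} \partial_i \partial_j u,
\]
where every term on the right lies in $L^2$ by the previous step, the smoothness of $a$, and the hypotheses on $f,g$; dividing by $a_{11} \geq \epsilon$ closes the base case. For the inductive step, assume the estimate for $k - 1$. For each tangential $j$, the function $\partial_j u \in H^1_0(\Omega^{\circ})$ weakly solves an equation of the same form with sources $f^j = \partial_j f$ and $g^j = \partial_j g - (\partial_j a)\gradient u$, which satisfy $\lVert f^j\rVert_{H^{k-1}} + \lVert g^j\rVert_{H^k} \lesssim \lVert f\rVert_{H^k} + \lVert g\rVert_{H^{k+1}} + \lVert u\rVert_{H^{k+1}}$. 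The inductive hypothesis applied to $\partial_j u$ gives $\lVert \partial_j u\rVert_{H^{k+1}}$ bounded by the desired right-hand side, which controls every derivative of $u$ of order $k+2$ containing at least one tangential factor. The remaining purely normal derivatives $\partial_1^{k+2} u$ are obtained by applying $\partial_1^k$ to the explicit expression for $\partial_1^2 u$ derived in the base case.

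The principal obstacle is to avoid circularity: because the estimate is quantitative, one cannot simply presume $u \in H^{k+2}$ at the outset but must run the difference-quotient argument to produce the higher regularity, absorbing the $\lVert D_j^h u\rVert_{H^1}$ term appearing on the right-hand side into the left via Cauchy-Schwarz before passing to the limit. Equally delicate is the careful bookkeeping of the commutator terms generated by tangential differentiation against the variable coefficients $a$, which must all be shown to contribute only lower-order expressions absorbable into $\lVert u\rVert_{H^{k+1}}$ on the right of the target estimate.
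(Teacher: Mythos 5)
Your proposal is correct and follows essentially the same route as the paper: tangential difference quotients to obtain the base-case $H^2$ estimate, algebraic recovery of $\partial_1^2 u$ from the expanded equation $-\divergence(a\gradient u)=f-\divergence g$, and induction via applying the lower-order estimate to $\partial_j u$ with modified sources. (Minor typo: in the commuted source you should get $g^j = \partial_j g + (\partial_j a)\gradient u$, not the minus sign, though this does not affect the norm bounds.)
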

\begin{proof}
	We complete the proof by the following steps
	\begin{enumerate}[(a)]
		\item\labellocal{item:0} Let $\partial_j^{\epsilon}$ denote the difference quotient
		\begin{align*}
			\partial_j^{\epsilon} f(x) &= \frac{f(x + \epsilon e _j) - f(x)}{\epsilon}
		\end{align*}
		where $e_j$ is the $j$th standard unit vector in $\RR^d$. 
		For every $j \neq 1$ (i.e., derivative in directions parallel to $\partial \Omega$), we have $\lVert \partial_j^{\epsilon} \gradient u \rVert_{L^2(\Omega, \CC^d)} \lesssim \|u\|_{H^1} + \|f\|_{L^2} + \|g\|_{H^1}$ uniform in $\epsilon \in \RR$ near $0$ and $(u, f, g) \in S$.
		\item\labellocal{item:a} $\|\partial_j \gradient u\|_{L^2} \lesssim \|u\|_{H^1} + \|f\|_{L^2} + \|g\|_{H^1}$ for $j \neq 1$.
		\item\labellocal{item:b} $\|u\|_{H^2} \lesssim \|u\|_{H^1} + \|f\|_{L^2} + \|g\|_{H^1}$
		\item\labellocal{item:c} $\|u\|_{H^{k+2}} \lesssim \|u\|_{H^{k+1}} + \|f\|_{H^k} + \|g\|_{H^{k+1}}$ for $k \in \ZZge{1}$.
	\end{enumerate}
	
	Proof of \reflocal{item:0}.
	Since $C^{\infty}_{c}$ is dense in $H^1_{0}$, \creflocal{eq:1} also holds for all $v \in H^1_0(\Omega^{\circ})$.
	Substitute $v = \partial_j^{-\epsilon} \partial_j^{\epsilon} \bar{u}$ into \creflocal{eq:1} gives
	\begin{align} \labellocal{eq:2}
		\int_{\Omega} \partial_j^{\epsilon} (a \gradient u) \cdot (\overline{\partial_j^{\epsilon} \gradient u}) dV &= \int_{\Omega} f \partial_j^{-\epsilon} (\overline{\partial_j^{\epsilon} u}) dV \\
		&\qquad + \int_{\Omega} (\partial_j^{\epsilon} g) \cdot (\overline{\partial_j^{\epsilon} \gradient u}) dV \nonumber
	\end{align}
	Expanding $\partial_j^{\epsilon}(a\gradient u)$ using the product rule and using positive-definiteness of $a$, the left-hand side of \creflocal{eq:2} is greater than
	\begin{align*}
		\alpha \lVert \partial_j^{\epsilon} \gradient u \rVert_{L^2}^2 - \beta \lVert \gradient u \lVert_{L^2} \lVert \partial_j^{\epsilon} \gradient u \rVert_{L^2}
	\end{align*}
	for some small $\alpha > 0$ and some large $\beta > 0$.
	Using the fact $\lVert \partial_j^{\epsilon} v \rVert_{L^2} \leq \lVert \partial_j v \rVert_{L^2}$, the right-hand side of \creflocal{eq:2} is bounded by
	\begin{align*}
		\beta \left( \lVert f \rVert_{L^2} \lVert \partial_j^{\epsilon} \gradient u \rVert_{L^2} + \|f\|_{L^2} \|\gradient u\|_{L^2} + \lVert g \rVert_{H^1} \lVert \partial_j^{\epsilon} \gradient u \rVert_{L^2} \right)
	\end{align*}
	for some large $\beta > 0$.
	Therefore, \creflocal{eq:2} implies
	\begin{align*}
		\lVert \partial_j^{\epsilon} \gradient u \rVert_{L^2}^2 &\lesssim \lVert \gradient u \lVert_{L^2} \lVert \partial_j^{\epsilon} \gradient u \rVert_{L^2} + \|f\|_{L^2} \lVert \partial_j^{\epsilon} \gradient u \rVert_{L^2} \\
		&\qquad + \|f\|_{L^2} \|\gradient u\|_{L^2} + \| g \|_{H^1} \lVert \partial_j^{\epsilon} \gradient u \rVert_{L^2}
	\end{align*}
	Bound all products on the right-hand side using Cauchy-Schwarz inequality $2ab \leq a^2 / \delta + \delta b^2$.
	By picking $\delta$ appropriately, all terms $\lVert \partial_j^{\epsilon} \gradient u \rVert_{L^2}^2$ on the right can be absorbed by the left-hand side, and we obtain the desired inequality.
	
	Proof of \reflocal{item:a}.
	By \reflocal{item:0} and Banach-Alaoglu, there is a sequence $\epsilon_k$ converging to $0$ such that $\partial_j^{\epsilon_k} \gradient u$ converges weakly in $L^2(\Omega)$.
	The limit $w$ satisfies
	\begin{align*}
		\langle w, \phi \rangle &= \lim_{k \rightarrow \infty} \langle \partial_j^{\epsilon_k} \overline{\gradient u}, \phi \rangle_{L^2} \\
		&= \lim_{k \rightarrow \infty} \langle \overline{\gradient u}, \partial_j^{-\epsilon_k}  \phi \rangle_{L^2} = - \langle \gradient u, \partial_j \phi \rangle
	\end{align*}
	for all $\phi \in C^{\infty}_{c}(\Omega)$, hence $w$ is the weak derivative $\partial_j \gradient u$.
	The bound in \reflocal{item:0} also applies to $\partial_j \gradient u$.
	
	Proof of \reflocal{item:b}.
	\creflocal{eq:1} implies that $\divergence(a \gradient u) = f - \divergence g$ as distributions.
	Expanding the equation yields
	\begin{align} \labellocal{eq:3}
		\sum_{i,j=1}^{d} a_{ij} (\partial_i \partial_j u) + \sum_{i,j=1}^{d} (\partial_i a_{ij}) (\partial_j u) &= f - \divergence g.
	\end{align}
	The conclusion follows from isolating $a_{11} \partial_1^2 u$ and noting that $a_{11} > \alpha > 0$.
	
	Proof of \reflocal{item:c}.
	Fix $j \neq 1$, replacing $v$ in \creflocal{eq:1} by $\partial_j v$ and integrating by parts yields
	\begin{align*}
		- \int_{\Omega} (a \gradient u') \cdot (\gradient v) dV &= \int_{\Omega} f' v + g' \cdot (\gradient v) dV
	\end{align*}
	for $u' = \partial_j u$, $f' = \partial_j f$, and $g' = \partial_j g + (\partial_j a) \gradient u$.
	By induction, we get $\|u'\|_{H^{k+1}} \lesssim \|u'\|_{H^k} + \|f'\|_{H^{k-1}} + \|g'\|_{H^k}$, which gives $\lVert \partial_j u \rVert_{H^{k+1}(V)} \lesssim \|u\|_{H^{k+1}} + \|f\|_{H^k} + \|g\|_{H^{k+1}}$ for $j \neq 1$.
	By \creflocal{eq:3}, it also holds with $j = 1$.
\end{proof}

\begin{lemma}[Elliptic regularity] \label{lem:elliptic_regularity_0} \stepcounter{refer}
	Let $\Omega$ be a compact $d$-dimensional manifold with boundary, and $a \in C^{\infty}(\Omega, \CC^{d \times d})$ be uniformly positive-definite.
	Let $S$ be the set of $(u, f, g) \in H^1_0(\Omega^{\circ}) \times L^2(\Omega) \times L^2(\Omega, \CC^d)$ solving
	\begin{align} \labellocal{eq:asp}
		- \int_{\Omega} (a \gradient u) \cdot (\gradient v) dV &= \int_{\Omega} f v dV + \int_{\Omega} g \cdot \gradient v dV
	\end{align}
	for all $v \in C^{\infty}_{c}(\Omega^{\circ})$, 
	then
	\begin{align*}
		\lVert u \rVert_{H^{k+2}} &\lesssim \lVert u \rVert_{H^{k+1}} + \lVert f \rVert_{H^k} +  \lVert g \rVert_{H^{k+1}}
	\end{align*}
	uniform in $(u, f, g) \in S$.
\end{lemma}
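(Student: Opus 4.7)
The plan is to use a partition of unity to localize the equation to small coordinate charts and reduce to the local estimate \cref{lem:elliptic_regularity_local_0}. First, I would cover the compact manifold $\Omega$ by finitely many coordinate charts $(U_i, \varphi_i)$, where each $\varphi_i$ is a smooth diffeomorphism onto an open subset of $\RRge{0} \times \RR^{d-1}$ sending $\partial \Omega \cap U_i$ into $\{0\} \times \RR^{d-1}$ (for boundary charts), or onto an open subset contained in the interior of the half-space (for interior charts). Then I would choose a smooth partition of unity $\{\chi_i\}$ subordinate to this cover.

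The key computation is that each localized function $\chi_i u$ satisfies a weak equation of the same form as the hypothesis, with modified data. Substituting $v \mapsto \chi_i v$ in the assumed equation and applying the product rule $\gradient(\chi_i u) = \chi_i \gradient u + u \gradient \chi_i$ yields
\begin{align*}
    -\int_\Omega (a \gradient(\chi_i u)) \cdot (\gradient v) dV &= \int_\Omega f_i' v + g_i' \cdot \gradient v dV
\end{align*}
for all $v \in C^{\infty}_{c}(\Omega^{\circ})$, where $f_i' = \chi_i f + g \cdot \gradient \chi_i + (a \gradient u) \cdot \gradient \chi_i$ and $g_i' = \chi_i g - u (a \gradient \chi_i)$. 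By the smoothness of $\chi_i$ and $a$, it follows that $\|f_i'\|_{H^k} \lesssim \|f\|_{H^k} + \|g\|_{H^k} + \|u\|_{H^{k+1}}$ and $\|g_i'\|_{H^{k+1}} \lesssim \|g\|_{H^{k+1}} + \|u\|_{H^{k+1}}$.

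Next, I would transfer each localized equation to the model half-space via $\varphi_i$. Under the change of variables, the bilinear form remains of the same type with a new matrix $\tilde{a}(y) = |\det D\varphi(x)|^{-1} D\varphi(x) a(x) D\varphi(x)^{T}$ (with $x = \varphi^{-1}(y)$), which is smooth and uniformly positive-definite since $D\varphi$ is invertible. The sources $f_i'$ and $g_i'$ pull back to functions with comparable Sobolev norms by the chart-based definition of $H^k$. Since $\chi_i u$ has compact support in $U_i$ and vanishes on $\partial \Omega$ in the trace sense (inherited from $u \in H^1_0(\Omega^{\circ})$), its pullback extended by zero belongs to $H^1_0$ of the half-space; for interior charts this is automatic because the support stays strictly away from $\{0\} \times \RR^{d-1}$. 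Applying \cref{lem:elliptic_regularity_local_0} then yields
\begin{align*}
    \|\chi_i u\|_{H^{k+2}} &\lesssim \|\chi_i u\|_{H^{k+1}} + \|f_i'\|_{H^k} + \|g_i'\|_{H^{k+1}} \\
    &\lesssim \|u\|_{H^{k+1}} + \|f\|_{H^k} + \|g\|_{H^{k+1}}.
\end{align*}
Summing over $i$ and using $\|u\|_{H^{k+2}(\Omega)} \lesssim \sum_i \|\chi_i u\|_{H^{k+2}}$ gives the desired estimate.

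The main obstacle is essentially bookkeeping: verifying the localization identity by integration by parts, and tracking that the change of coordinates preserves uniform positive-definiteness and the $H^1_0$ boundary condition. The substantive content, namely boundary regularity, is entirely captured by \cref{lem:elliptic_regularity_local_0}; interior charts reduce to the same statement after embedding the chart image into the interior of the half-space.
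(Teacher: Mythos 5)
Your proposal is correct and follows essentially the same approach as the paper: localize with a partition of unity (the commutator formulas for $f_i'$ and $g_i'$ match the paper's), change coordinates to the model half-space (your $\tilde a$ coincides with the paper's $a'$ after noting $D\varphi = (D\phi)^{-1}$ and $\lvert\det D\varphi\rvert^{-1} = \lvert\det D\phi\rvert$), apply \cref{lem:elliptic_regularity_local_0}, and sum. The only cosmetic difference is that the paper first argues ``it suffices to prove the result when $(u,f,g)$ are compactly supported in a single chart'' and then changes variables once, whereas you change variables chart-by-chart before summing; both are valid and equivalent.
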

\begin{proof}
	Let $U$ be any open subset of $\Omega$ diffeomorphic to an open subset of $\RRge{0} \times \RR^{d-1}$.
	Let $\eta$ be any smooth bump function supported in a compact subset of $U$.
	Replacing $v$ in \creflocal{eq:asp} by $\eta v$ and manipulating the equation yields
	\begin{align*}
		- \int_{\Omega} (a \gradient u') \cdot (\gradient v) dV &= \int_{\Omega} f' v + g' \cdot \gradient v dV
	\end{align*}
	for $u' = \eta u$, $f' = \eta f + g \gradient \eta + (a \gradient u) \cdot \gradient \eta$, and $g' = \eta g - ua \gradient \eta$.
	Assuming $\lVert u' \rVert_{H^{k+2}} \lesssim \lVert u' \rVert_{H^{k+1}} + \lVert f' \rVert_{H^k} +  \lVert g' \rVert_{H^{k+1}}$, then $\lVert \eta u \rVert_{H^{k+2}} \lesssim \lVert u \rVert_{H^{k+1}} + \lVert f \rVert_{H^k} +  \lVert g \rVert_{H^{k+1}}$.
	Choosing $\eta$ among a finite partition of unity over $\Omega$ completes the proof.
	Therefore, it suffices to prove this result in the special case that $u$, $f$, $g$ are compactly supported in $U$.
	Let $\phi^{-1}$ denote the diffeomorphism $U \hooktwoheadrightarrow U' \subseteq \RRge{0} \times \RR^{d-1}$.
	Shrinking $U$ if necessary, we may assume that the total derivatives $\|D\phi^{-1}\|_{L^{\infty}(U)}$ and $\|D\phi\|_{L^{\infty}(U')}$ are bounded.
	By a change of variable, \creflocal{eq:asp} writes
	\begin{align*}
		\int_{U'} (a \gradient u) \cdot (\gradient v) dV &= \int_{U'} f' v + g' \cdot \gradient v dV
	\end{align*}
	for $a' = \lvert \det D\phi \rvert (D\phi^{-1}) a (D\phi^{-1})^T$, $f' = f \lvert \det D\phi \rvert$, and $g' = \lvert \det D\phi \rvert (D\phi^{-1})g$.
	Note that $\gradient$ denotes the Euclidean gradient on $\RR^{d}$, and all functions are interpreted as functions on $U'$ (composition with $\phi$ is omitted).
	Extending $u$, $f'$, $g'$ extend by zero and $a'$ to any uniformly positive-definite function on $\RRge{0} \times \RR^{d-1}$, then the above identity holds for all $v \in C^{\infty}_{c}(\RRg{0} \times \RR^{d-1})$.
	The conclusion then follows from \cref{lem:elliptic_regularity_local_0}.
\end{proof}

\begin{lemma}[Local elliptic regularity] \label{lem:elliptic_regularity_local_1}
	Given $\Omega = \RRge{0} \times \RR^{d-1}$ and a uniformly positive-definite $a \in C^{\infty}_{\local}(\Omega, \CC^{d \times d})$,
	denote by $S$ the set of all $(u, f, g, h) \in H^1(\Omega) \times L^2(\Omega) \times L^2(\Omega, \CC^d) \times L^2(\partial \Omega)$ solving
	\begin{align} \labellocal{eq:1}
		\begin{aligned}
		- \int_{\Omega} (a \gradient u) \cdot (\gradient v) dV &= \int_{\Omega} (f v + g \cdot \gradient v) dV + \int_{\partial \Omega} h v dS   
		\end{aligned}
	\end{align}
	for all $v \in C^{\infty}_{c}(\Omega)$.
	For every $k \in \ZZge{0}$,
	\begin{align*}
		\begin{aligned}
		\lVert u \rVert_{H^{k+2}} &\lesssim \lVert u \rVert_{H^{k+1}} + \lVert f \rVert_{H^{k}} + \lVert g \rVert_{H^{k+1}} + \lVert h \rVert_{H^{k+\frac{1}{2}}(\partial \Omega)}
		\end{aligned}
	\end{align*}
	uniform in $(u, f, g, h) \in S$.
\end{lemma}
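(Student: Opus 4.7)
The plan is to absorb the boundary datum $h$ into an effective in-domain source via a trace lift, thereby reducing the claim to an $H^1(\Omega)$-version of \cref{lem:elliptic_regularity_local_0}.

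First I would construct a vector field $G \in H^{k+1}(\Omega, \CC^d)$ satisfying $G \cdot \hat n = h$ on $\partial \Omega$ and the quantitative bound $\|G\|_{H^{k+1}(\Omega)} \lesssim \|h\|_{H^{k+(1/2)}(\partial \Omega)}$; since the outward normal on $\partial\Omega$ is $-e_1$, it suffices to take $G = (G_1, 0, \ldots, 0)$ where $G_1$ is any standard half-space lift of $-h$ to $H^{k+1}(\Omega)$. Applying the divergence theorem to $vG$, we obtain for every $v \in C^{\infty}_{c}(\Omega)$
\begin{align*}
	\int_{\partial\Omega} h v\, dS \;=\; \int_\Omega \bigl((\divergence G)\,v + G \cdot \gradient v\bigr) dV,
\end{align*}
and substituting into \creflocal{eq:1} turns the equation into one of the same form but with no boundary term, with $f$ replaced by $\tilde f := f + \divergence G \in H^k(\Omega)$ and $g$ replaced by $\tilde g := g + G \in H^{k+1}(\Omega, \CC^d)$, enjoying $\|\tilde f\|_{H^k} + \|\tilde g\|_{H^{k+1}} \lesssim \|f\|_{H^k} + \|g\|_{H^{k+1}} + \|h\|_{H^{k+(1/2)}(\partial\Omega)}$. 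By density of $C^{\infty}_{c}(\Omega)$ in $H^1(\Omega)$, the reduced equation holds for every $v \in H^1(\Omega)$.

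Next, I would replay the four-step scheme of \cref{lem:elliptic_regularity_local_0} on the reduced equation, allowing $u$ to lie in $H^1(\Omega)$ rather than $H^1_0(\Omega^\circ)$. The only point at which the previous proof invoked the vanishing boundary hypothesis was to check that the tangential difference-quotient test function $v = \partial_j^{-\epsilon} \partial_j^{\epsilon} \bar u$ (for $j \neq 1$) lies in the admissible test space; here the admissible test space is the larger $H^1(\Omega)$, and since translations parallel to $\partial \Omega$ preserve both $\Omega$ and $H^1(\Omega)$, this $v$ remains admissible for arbitrarily small $|\epsilon|$. The resulting tangential estimate $\|\partial_j \gradient u\|_{L^2} \lesssim \|u\|_{H^1} + \|\tilde f\|_{L^2} + \|\tilde g\|_{H^1}$ therefore carries over verbatim for each $j \neq 1$, and the normal direction $j = 1$ is recovered as before by isolating $a_{11} \partial_1^2 u$ from the distributional identity $\divergence(a\gradient u) = \tilde f - \divergence \tilde g$ and invoking $a_{11} \geq \alpha > 0$. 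Induction on $k$ then proceeds by differentiating the reduced equation tangentially, exactly as in steps (a)--(c) of \cref{lem:elliptic_regularity_local_0}.

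The only genuinely technical input beyond \cref{lem:elliptic_regularity_local_0} is the trace-lift estimate $\|G\|_{H^{k+1}} \lesssim \|h\|_{H^{k+(1/2)}(\partial\Omega)}$, a classical half-space Sobolev result (constructible explicitly via a Poisson-type extension of $-h$). Once this is in hand, nothing else in the proof of \cref{lem:elliptic_regularity_local_0} actually relied on $u$ vanishing on $\partial\Omega$, so the argument transfers with only bookkeeping changes.
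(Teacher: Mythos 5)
Your proposal is correct, but it takes a genuinely different route from the paper. The paper keeps the boundary datum $h$ in the equation throughout and handles it only where it actually intervenes: when the difference-quotient test function $v = \partial_j^{-\epsilon}(\partial_j^{\epsilon}\bar{u})$ is substituted, the extra term $\int_{\partial\Omega} h\,\partial_j^{-\epsilon}(\overline{\partial_j^{\epsilon}u})\,dS$ is estimated by the duality pairing $\lVert h\rVert_{H^{1/2}(\partial\Omega)}\,\lVert \partial_j(\partial_j^{\epsilon}u)\rVert_{H^{-1/2}(\partial\Omega)}$ and then by $\lVert \partial_j^{\epsilon}u\rVert_{H^{1}(\Omega)}$ via the trace inequality (\cref{lem:trace}); the higher-order steps absorb $h$ through its tangential derivatives. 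You instead eliminate $h$ at the outset by a normal trace lift $G=(G_1,0,\ldots,0)$ with $G_1|_{\partial\Omega}=-h$ and $\lVert G\rVert_{H^{k+1}}\lesssim \lVert h\rVert_{H^{k+1/2}(\partial\Omega)}$, converting the boundary term into in-domain data $\tilde f = f+\divergence G$, $\tilde g = g+G$, and then rerun \cref{lem:elliptic_regularity_local_0} on the larger test space $H^1(\Omega)$. Your reduction is clean and your observation that the Dirichlet proof uses $u\in H^1_0$ only to certify admissibility of the tangential difference-quotient test function (automatic here) is accurate; it also spares you from tracking $h$ through the induction. The trade-off is that you invoke the surjectivity of the trace operator, i.e.\ a bounded right inverse $H^{k+1/2}(\partial\Omega)\rightarrow H^{k+1}(\Omega)$, which is classical but is precisely the tool the paper's self-contained appendix avoids (only the forward trace inequality is proved there); if you want the argument to remain as self-contained as the paper's, you would need to add the explicit Fourier-side construction of that lift.
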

\begin{proof}
	Similar to the proof of \cref{lem:elliptic_regularity_local_0}, the main steps are 
	\begin{enumerate}[(a)]
		\item\labellocal{item:0} $\lVert \partial_j^{\epsilon} \gradient u \rVert_{L^2(V, \CC^d)} \lesssim \|u\|_{H^1} + \|f\|_{L^2} + \|g\|_{H^1} + \|h\|_{H^{1/2}(\partial \Omega)}$ for $j \neq 1$.
		\item $\|\partial_j \gradient u\| \lesssim \|u\|_{H^1} + \|f\|_{L^2} + \|g\|_{H^1} + \|h\|_{H^{1/2}(\partial \Omega)}$ for $j \neq 1$.
		\item $\|u\|_{H^2} \lesssim \|u\|_{H^1} + \|f\|_{L^2} + \|g\|_{H^1} + \|h\|_{H^{1/2}(\partial \Omega)}$
		\item $\|u\|_{H^{k+2}} \lesssim \|u\|_{H^{k+1}} + \|f\|_{H^k} + \|g\|_{H^{k+1}} + \|h\|_{H^{k+1/2}(\partial \Omega)}$ for $k \in \ZZge{1}$.
	\end{enumerate}
	The only difference in the proof occurs in \reflocal{item:0}.
	When substituting $v = \partial_j^{-\epsilon} (\partial_j^{\epsilon} \bar{u})$ into \creflocal{eq:1}, the right-hand side gets the additional term $\int_{\partial \Omega} h \partial_j^{-\epsilon} (\overline{\partial_j^{\epsilon} u}) dS$.
	It is bounded by $\|h\|_{H^{1/2}(\partial \Omega)} \| \partial_j (\partial_j^{\epsilon} u)\|_{H^{-1/2}(\partial \Omega)}$.
	The latter factor is dominated by $\| \partial_j^{\epsilon} u\|_{H^{1/2}(\partial \Omega)}$ by definition and $\| \partial_j^{\epsilon} u\|_{H^{1}(\Omega)}$ by the trace inequality (\cref{lem:trace}).
	This justifies \reflocal{item:0} with the additional term $\|h\|_{H^{1/2}(\partial \Omega)}$ on the right-hand side.
\end{proof}

\begin{lemma}[Elliptic regularity] \label{lem:elliptic_regularity_1} \stepcounter{refer}
	Let $\Omega$ be a compact $d$-dimensional manifold with boundary.
	Let $S$ be the set of $(u, f, g, h) \in H^1(\Omega) \times L^2(\Omega) \times L^2(\Omega, \CC^d) \times L^2(\partial \Omega)$ solving
	\begin{align*}
		- \int_{\Omega} (a \gradient u)(\gradient v) dV &= \int_{\Omega} (f v + g \cdot \gradient v) dV + \int_{\partial \Omega} h v dS
	\end{align*}
	for all $v \in C^{\infty}(\Omega)$, then
	\begin{align*}
		\lVert u \rVert_{H^{k+2}} &\lesssim \lVert u \rVert_{H^1} + \lVert f \rVert_{H^k} + \| g \|_{H^{k+1}} + \lVert h \rVert_{H^{k+\frac{1}{2}}(\partial \Omega)}
	\end{align*}
	uniform in $(u, f, g, h) \in S$.
\end{lemma}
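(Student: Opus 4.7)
The plan is to reduce the global statement to the local half-space version \cref{lem:elliptic_regularity_local_1} (and, in the interior, to \cref{lem:elliptic_regularity_local_0}) by a partition-of-unity / change-of-variables argument that parallels the proof of \cref{lem:elliptic_regularity_0}. First I would pick a finite atlas of $\Omega$ by open charts $\{U_i\}$, each diffeomorphic via $\phi_i$ either to an open subset of $\RR^d$ (interior charts) or to an open subset of $\RRge{0}\times\RR^{d-1}$ with the boundary portion of $\partial\Omega$ mapping onto $\{0\}\times\RR^{d-1}$ (boundary charts). Choose a subordinate partition of unity $\{\eta_i\}$ with $\eta_i$ supported in a compact subset of $U_i$. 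Since $\|u\|_{H^{k+2}(\Omega)}\lesssim \sum_i \|\eta_i u\|_{H^{k+2}(U_i)}$, it suffices to bound each $\|\eta_i u\|_{H^{k+2}}$.

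Next, for a fixed $i$, substitute the test function $v\mapsto \eta_i v$ into the hypothesized weak identity. Expanding $\gradient(\eta_i v)=\eta_i\gradient v + v\gradient \eta_i$ and moving the terms involving $\gradient\eta_i$ to the right-hand side, I obtain the new weak equation
\begin{align*}
-\int_\Omega (a\gradient u')\cdot(\gradient v)\,dV &= \int_\Omega (f'v + g'\cdot\gradient v)\,dV + \int_{\partial\Omega} h' v\,dS
\end{align*}
for $u'=\eta_i u$, with $f' = \eta_i f - g\cdot\gradient\eta_i - (a\gradient u)\cdot\gradient\eta_i$, $g' = \eta_i g - u\, a\gradient\eta_i$ (possibly after a further integration by parts to absorb a stray $u\gradient\eta_i$ term inside the divergence), and $h' = \eta_i h$. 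Each of $f'$, $g'$, $h'$ is bounded in the relevant norm by the data on the right-hand side of the target inequality plus $\|u\|_{H^{k+1}}$ (the $\gradient u$ term shows up with one derivative loss, which is harmless because we bound $\|u\|_{H^{k+2}}$ by $\|u\|_{H^{k+1}}$).

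Then I push forward through $\phi_i^{-1}$. Under a smooth diffeomorphism between bounded regions with bounded total derivatives, the coefficient matrix transforms to $a' = |\det D\phi_i|\,(D\phi_i^{-1})\,a\,(D\phi_i^{-1})^T$, which remains uniformly positive-definite and smooth; $f'$, $g'$ transform by multiplication with smooth bounded factors; and the boundary density $h'$ transforms by the Jacobian of $\phi_i$ restricted to $\partial\Omega$, which is a smooth positive factor. Extending all data by zero outside the image produces a compactly supported solution on $\RRge{0}\times\RR^{d-1}$ (or $\RR^d$) of the form required by \cref{lem:elliptic_regularity_local_1} (resp.\ \cref{lem:elliptic_regularity_local_0}). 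The Sobolev norms are equivalent under such a change of coordinates, so applying the local lemma and summing over $i$ yields
\begin{align*}
\|u\|_{H^{k+2}(\Omega)} &\lesssim \|u\|_{H^{k+1}(\Omega)} + \|f\|_{H^k(\Omega)} + \|g\|_{H^{k+1}(\Omega)} + \|h\|_{H^{k+1/2}(\partial\Omega)}.
\end{align*}
Finally, the $\|u\|_{H^{k+1}}$ term on the right can be relaxed to $\|u\|_{H^1}$ by induction on $k$ (the base case $k=0$ is already of the desired form, and at step $k$ the inductive hypothesis at step $k-1$ controls $\|u\|_{H^{k+1}}$ in terms of $\|u\|_{H^1}$ plus the data).

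The main obstacle I anticipate is keeping the boundary term under control through localization and change of variables: I need to verify that $\eta_i h$ transforms in a way that lands in $H^{k+1/2}$ of the flat boundary with a bound controlled by $\|h\|_{H^{k+1/2}(\partial\Omega)}$, and that the cross terms introduced by $\gradient\eta_i$ in the interior (which involve $\gradient u$) only cost one derivative, so they are absorbed into the $\|u\|_{H^{k+1}}$ slot of the local estimate rather than the $\|u\|_{H^{k+2}}$ slot we are trying to control. Once this bookkeeping is done carefully, everything else is routine.
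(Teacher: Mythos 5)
Your proposal follows essentially the same route as the paper: localize with a partition of unity, substitute $\eta_i v$ as test function, flatten each chart by a change of variables, extend by zero, and invoke the local half-space estimate (\cref{lem:elliptic_regularity_local_1}, with \cref{lem:elliptic_regularity_local_0} in the interior). Your explicit closing induction to relax $\lVert u\rVert_{H^{k+1}}$ to $\lVert u\rVert_{H^{1}}$ is a step the paper leaves implicit, and it is correct as you describe it.
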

\begin{proof}
	The proof is nearly identical to that of \cref{lem:elliptic_regularity_0}, except that \cref{lem:elliptic_regularity_local_1} is used in place of \cref{lem:elliptic_regularity_local_0}.
\end{proof}

\end{shownto}

\bibliographystyle{./siamplain} 
\bibliography{LinearNetwork}

\end{document}